\documentclass[reqno]{amsart}
\usepackage{amsmath,amsthm,amssymb,enumitem,times,xcolor,bm,esint,mathtools,dsfont}
\usepackage{leftidx}
\usepackage[mathscr]{eucal}
\usepackage{harpoon}
\usepackage{todonotes}
\usepackage{tikz}
\usepackage[titletoc]{appendix} %附录编号
\usepackage[%backref=page,
linktocpage=true,colorlinks,citecolor=magenta,linkcolor=blue,urlcolor=magenta]{hyperref}
\allowdisplaybreaks[4]
\usepackage{etoolbox}
\patchcmd{\subsubsection}{\itshape}{\bfseries}{}{}
\usepackage{geometry}
\newtheorem{theorem}{Theorem}[section] 
\newtheorem{corollary}[theorem]{Corollary} 
 
\newtheorem{lemma}[theorem]{Lemma} 
\newtheorem{proposition}[theorem]{Proposition}
 
\theoremstyle{remark}
\newtheorem{remark}[theorem]{Remark} 
\numberwithin{equation}{section}  
\def\be{\begin{equation}}
\def\ee{\end{equation}}
\def\CR{{\mathcal C}}
\def\ER{{\mathcal E}}
\def\MR{{\mathcal M}}
\def\R{\mathbb R}
\def\S{\mathbb S}

\def\ga{\gamma}
\newcommand{\ve}{\varepsilon}
\def\iy{\infty}
\def\om{\omega}
\def\La{\Lambda}
\def\vp{\varphi}
\def\pa{\partial}
\def\ti{\tilde}
\def\f{\frac}
\def\tf{\tfrac}
\def\sbr#1{\left(#1\right)}

\def\abr#1{\left\langle#1\right\rangle}
\def\nm#1{\left\|#1\right\|}
\def\rd{\mathrm d}
\DeclareMathOperator{\dist}{dist}

\begin{document}
\theoremstyle{plain}

\title[Caffarelli--Kohn--Nirenberg inequality]{A lower bound for the stability constant in the Caffarelli--Kohn--Nirenberg inequality}

\author{Houwang Li}\address{School of Mathematics and Statistics, Beijing Institute of Technology, Beijing, China}\email{houwangli@bit.edu.cn}
\author{Juncheng Wei}\address{Department of Mathematics, The Chinese University of Hong Kong, Hong Kong, China}\email{wei@math.cuhk.edu.hk}

%\date{\today}
\keywords{}

\begin{abstract}
We study the dependence on the dimension of stability constants for the Caffarelli--Kohn--Nirenberg inequalities, using the weighted energy distance to the manifold of optimizers. In the subcritical non-degenerate case, i.e., $b_{FS}(a)<b<a+1$ with $a<0$ and $a<b<a+1$ with $a\ge0$, we prove
\[
 \frac{\beta_{a,b}}d\le c_{d,a,b} \le \frac{4(1+a-b)}{d+4(1+a-b)}
\]
with a positive constant depending only on the weights, uniformly over all admissible dimensions. The upper bound shows that this dimensional order is optimal. Our proof adapts the strategy of Dolbeault, Esteban, Figalli, Frank and Loss \cite{Dolbeault etc=2025} for Sobolev stability to the cylinder. It combines a local estimate on a neighborhood whose radius is independent of the dimension, a decomposition into three amplitude ranges, and a quantitative passage from local to global stability. The critical case $a=b>0$ exhibits a different dimensional scale,
\[
 \frac{\min\{\beta_{\mathrm{Sob}},4a\}}{25d^2}
 \le c_{d,a,a}\le\frac{16a}{d^2}+O_a(d^{-3}),
\]
where $\beta_{\mathrm{Sob}}>0$ is a universal Sobolev stability constant. The lower bound follows from an exact radial power transformation that separates the weighted deficit into a Sobolev deficit and an angular energy term. A geometric estimate controls the displacement of Sobolev optimizers from the origin, while an angular perturbation gives the upper bound. In the degenerate case on the Felli--Schneider curve, for fixed $a<0$, we prove
\[
 \frac{\beta_a^{FS}}{d^3}
 \le\widetilde c_{d,a,b_{FS}(a)}
 \le\frac{32|a|}{d^3}+O_a(d^{-4}),
\]
where $\widetilde c$ is the quartic stability constant given in \cite{Frank-Peteranderl=2024} and $\beta_a^{FS}>0$ is independent of $d$. A uniform chart of approximate conformal profiles, a truncation estimate retaining their Euler--Lagrange residual, and the local-to-global argument yield the lower bound. An explicit perturbation gives an upper bound in every dimension. The three cases thus have optimal dimensional orders $d^{-1}$, $d^{-2}$, and $d^{-3}$, respectively.
\end{abstract}

\maketitle
%%%%%%%----------------------------------------------------------------------

\section{Introduction}\label{sec:introduction}

The Caffarelli--Kohn--Nirenberg inequalities form a family of weighted interpolation inequalities in which the exponents and the powers of the distance to the origin are linked by scaling. Introduced in \cite{CKN=1984}, they provide a common framework for Sobolev and Hardy inequalities and for variational problems associated with singular or degenerate elliptic equations. The weights preserve dilations but generally destroy translation invariance. As a consequence, even the description of the optimizers involves questions of existence, concentration and symmetry breaking. In this paper we study a quantitative aspect of this theory: how the optimal stability constant depends on the dimension, with the weights kept fixed.

\subsection{The inequality, optimizers and symmetry}
We consider the Caffarelli--Kohn--Nirenberg (CKN) inequality
\be\label{eq:intro-ckn}
 \int_{\R^d}|x|^{-2a}|\nabla f|^2\rd x
 \ge S_{d,a,b}\left(\int_{\R^d}|x|^{-bq}|f|^q\rd x\right)^{2/q},
\ee
where $S_{d,a,b}$ is the sharp constant and
\be\label{eq:intro-parameters}
 d\ge2,\qquad -\iy<a<a_c:=\frac{d-2}{2},\qquad
 \begin{cases}
 a<b<a+1,&d=2,\\
 a\le b<a+1,&d\ge3,
 \end{cases}
 \qquad q:=\frac{2d}{d-2(1+a-b)}.
\ee
All functions are real-valued. The natural space is the weighted energy completion
\[
 D_a^{1,2}(\R^d):=\overline{C_c^\infty(\R^d\setminus\{0\})}^{\,\nm{\cdot}_{D_a^{1,2}}},\qquad
 \nm f_{D_a^{1,2}}^2:=\int_{\R^d}|x|^{-2a}|\nabla f|^2\rd x,
\]
with inner product
\[
 \langle f,g\rangle_{D_a^{1,2}}
 =\int_{\R^d}|x|^{-2a}\nabla f\cdot\nabla g\rd x.
\]
We use $D^{1,2}(\R^d)$ for the usual homogeneous Sobolev space. 
For $d\ge3$, the pair $(a,b)=(0,0)$ gives the sharp Sobolev inequality, while the endpoint $b=a+1$, excluded from \eqref{eq:intro-parameters}, gives the weighted Hardy inequality with constant $(a_c-a)^2$. We write
\[
 2^*:=\begin{cases}
 \displaystyle\frac{2d}{d-2},&d\ge3,\\
 +\infty,&d=2.
 \end{cases}
\]
The critical line $b=a$ corresponds to $q=2^*$; the range $a<b<a+1$ is subcritical.

The broader weighted interpolation framework was developed by Lin \cite{Lin=1986}. Early sharp-constant questions were studied by Chou and Chu \cite{Chou-Chu=1993}, while Horiuchi \cite{Horiuchi=1997} analyzed best constants and attainment for power weights. These results already exhibit the interaction between scaling, the admissible parameters and loss of compactness.
Catrina and Wang \cite{Catrina-Wang=2001} established existence of optimizers in the open range $a<b<a+1$ and exhibited symmetry breaking for negative $a$. They also identified the nonattainment phenomenon on $b=a<0$: in dimensions $d\ge3$, the sharp constant there equals the unweighted Sobolev constant but is not attained. 
These results distinguish loss of compactness from the separate question of whether an optimizer, when it exists, is radial.
The threshold for instability of the radial branch is described by the Felli--Schneider curve \cite{Felli-Schneider=2003}
\be\label{eq:intro-fs}
 b_{FS}(a):=a-a_c+\frac{d(a_c-a)}{2\sqrt{(a_c-a)^2+d-1}}.
\ee
For $a<0$ and $a<b<b_{FS}(a)$ the radial optimizer is unstable and symmetry is broken. Dolbeault, Esteban and Loss \cite{Dolbeault etc=2016} proved symmetry up to this threshold, using nonlinear diffusion and rigidity arguments. In particular, apart from the translation-invariant Sobolev case $(a,b)=(0,0)$, all optimizers in the region
\be\label{eq:intro-symmetry}
 \begin{cases}
 b_{FS}(a)\le b<a+1,&a<0,\\
 a\le b<a+1,&0\le a<a_c,
 \end{cases}
\ee
belong to
\[
 \MR_{a,b}:=\{cW_\tau:c\in\R,\ \tau>0\},\qquad
 W_\tau(x):=\tau^{a_c-a}W(\tau x),
\]
where
\be\label{W}
 W(x):=\bigl[2q(a_c-a)^2\bigr]^{1/(q-2)}
 \bigl[1+|x|^{(a_c-a)(q-2)}\bigr]^{-2/(q-2)}.
\ee
The zero function is included in $\MR_{a,b}$. For $(a,b)=(0,0)$, translations must also be included: the full manifold consists of all scalar multiples, dilations and translates of an Aubin--Talenti function \cite{Aubin=1976,Talenti=1976}. This distinction between the full Sobolev manifold and the centered CKN manifold is essential at the critical line.

The development of symmetry regions also includes \cite{DELT=2009,DEL=2012}, where compactness and spectral estimates yield qualitative symmetry results. For the larger CKN interpolation family and its weighted logarithmic Hardy limits, existence and rigidity were studied in \cite{Dolbeault-Esteban=2012,DEFT=2015}. These works provide background on optimizer structure; the present paper assumes symmetry and quantifies stability as the dimension varies.

\subsection{Stability and quantitative dependence on the dimension}
A stability inequality asks whether a small deficit forces a function to be close to the optimizer manifold, with an explicit bound in the energy norm. 
We write the energy norm and the deficit as
\be\label{eq:intro-deficit}
 \begin{aligned}
 E_a(f)&:=\nm f_{D_a^{1,2}}^2,\\
 D_{d,a,b}(f)&:=E_a(f)-S_{d,a,b}\left(\int_{\R^d}|x|^{-bq}|f|^q\rd x\right)^{2/q}.
 \end{aligned}
\ee
Following Wei and Wu \cite{Wei-Wu=2022,Wei-Wu=2024}, the \emph{non-degenerate case} is the parameter range
\be\label{case-1}
 b_{FS}(a)<b<a+1\quad\text{if }a<0;\qquad
 a\le b<a+1\quad\text{if }0\le a<a_c.
\ee
We exclude $(a,b)=(0,0)$ when using the centered manifold $\MR_{a,b}$ and define
\be\label{eq:intro-stability-constant}
 c_{d,a,b}:=\inf_{f\in D_a^{1,2}(\R^d)\setminus\MR_{a,b}}
 \frac{D_{d,a,b}(f)}{\dist_{D_a^{1,2}}(f,\MR_{a,b})^2}.
\ee
For $d\ge3$, Wei and Wu \cite{Wei-Wu=2022} established positivity of this constant in the non-degenerate case, giving a weighted analogue of the Bianchi--Egnell inequality. The two-dimensional counterpart was proved in \cite[Theorem 1.2]{Wei-Wu=2024}. The positivity statement for fixed dimension and weights does not itself provide a lower bound that can be tracked as $d$ increases.

The \emph{degenerate case} is the Felli--Schneider case
\be\label{case-2}
 a<0,\qquad b=b_{FS}(a).
\ee
In this case, the Hessian has additional zero modes that are not generated by the symmetries of the optimizer manifold. Frank and Peteranderl \cite{Frank-Peteranderl=2024} proved that the appropriate global remainder is of fourth order:
\be\label{eq:intro-degenerate-stability}
 \widetilde c_{d,a,b}:=
 \inf_{f\in D_a^{1,2}(\R^d)\setminus\MR_{a,b}}
 \frac{E_a(f)D_{d,a,b}(f)}{\dist_{D_a^{1,2}}(f,\MR_{a,b})^4}>0.
\ee
The exponent four is optimal in the degenerate case, and a quadratic-distance remainder cannot hold there. A different question concerns attainment of the best quadratic stability quotient in the non-degenerate case. Wei and Wu \cite{Wei-Wu=2024} proved existence of minimizers of \eqref{eq:intro-stability-constant} for certain parameters in the non-degenerate case, including those for which the lowest normal eigenvalue is on the radial branch $\mu_{0,2}$. More recently, Wei and Wu \cite{Wei-Wu=2026} studied nonnegative approximate critical points in the degenerate case and obtained sharp control of their distance to sums of bubbles by the one-third power of the Euler--Lagrange residual. This residual-based stability concerns the equation and multi-bubble configurations, whereas \eqref{eq:intro-stability-constant} measures the deficit of the functional inequality.

The search for quantitative Sobolev remainders goes back to Br\'ezis and Lieb \cite{Brezis-Lieb=1985}, who studied remainder terms and posed the optimizer-distance question. Lions's concentration-compactness principle for the critical case \cite{Lions=1985} supplies a general framework for the loss of compactness under dilations. Such compactness arguments give qualitative convergence of optimizing sequences, while dimensional bounds require additional quantitative estimates.

The unweighted Sobolev problem provides the model for the dimensional question. Let $S_d$ be the sharp Sobolev constant and let $\mathcal B$ denote the full manifold of Aubin--Talenti functions, including translations. Its centered submanifold is denoted by $\mathcal B_0$. Define the Sobolev deficit by
\be\label{eq:sobolev-deficit}
 D_{\mathrm{Sob}}(F):=\int_{\R^d}|\nabla F|^2\rd x-S_d\nm F_{2^*}^2.
\ee
Bianchi and Egnell \cite{Bianchi-Egnell=1991} proved that
\[
 D_{\mathrm{Sob}}(F)
 \ge C_{d,\mathrm{BE}}\dist_{D^{1,2}}(F,\mathcal B)^2
\]
with $C_{d,\mathrm{BE}}>0$. Their argument combines a local second-variation estimate with a compactness argument. Neither part of that proof gives a useful global lower bound with the correct behavior as $d\to\infty$.

Dolbeault, Esteban, Figalli, Frank and Loss \cite{Dolbeault etc=2025} overcame difficulties and proved that there is an explicit universal constant $\beta_{\mathrm{Sob}}>0$ such that
\be\label{eq:intro-sobolev-dimension}
 \frac{\beta_{\mathrm{Sob}}}{d}\le C_{d,\mathrm{BE}}\le\frac4{d+4},\qquad d\ge3.
\ee
Thus their result determines the optimal dimensional order $d^{-1}$, without identifying the exact best stability constant. The two main ingredients are a local estimate in a relative neighborhood of fixed radius, independent of $d$, and a quantitative local-to-global argument based on competing symmetries and continuous rearrangement. Their Sobolev estimate also yields a stability inequality for the Gaussian logarithmic Sobolev inequality with a dimension-independent constant by a large-dimensional limit. Explicit stability bounds and their dimensional dependence have also been studied for Hardy--Littlewood--Sobolev and fractional Sobolev inequalities by Chen, Lu and Tang \cite{Chen-Lu-Tang=2024,Chen-Lu-Tang=2025,Chen-Lu-Tang=2026}, using rearrangement, duality and refined local estimates. These developments motivate the corresponding question in the weighted setting, where the spectrum and the optimizer profile depend on the weights as well as the dimension.

Several related results address different aspects of stability. Chen, Frank and Weth \cite{Chen-Frank-Weth=2013} extended the Bianchi--Egnell remainder to fractional Sobolev inequalities, and Figalli and Zhang \cite{Figalli-Zhang=2022} obtained gradient stability with the optimal distance exponent for general first-order Sobolev inequalities. K\"onig \cite{Koenig=2023} showed that the global Sobolev stability constant is strictly below its local spectral value and proved attainment of the optimal quotient in \cite{Koenig=2025}. Thus the optimal dimensional order, the exact value of the constant and attainment of the stability quotient are distinct questions.

%On the dual side, Lieb's sharp Hardy--Littlewood--Sobolev theorem \cite{Lieb=1983} identifies the sharp inequality and optimizers in the conformally invariant case. Carlen \cite{Carlen=2017} developed a framework for transferring stability to dual inequalities. These results underlie the connection between Sobolev and Hardy--Littlewood--Sobolev stability used in the fractional theory cited above.

\subsection{Main results}
Our first result extends the dimensional lower-bound mechanism to the subcritical CKN inequalities in the non-degenerate case. The weights are fixed throughout the dimensional limit, and admissible dimensions are those satisfying \eqref{eq:intro-parameters} and the non-degeneracy assumptions.
Throughout the paper, we use the cylinder and its parameters
\be\label{eq:mass-parameter}
 \CR:=\R\times\S^{d-1},\qquad
 \La:=(a_c-a)^2,\qquad s:=\sqrt{1+\frac{d-1}{\La}}>1.
\ee

\begin{theorem}\label{thm:main}
Fix $a,b$ with $a<b<a+1$. There is a constant $\beta_{a,b}>0$, depending only on $a,b$, such that for every integer $d\ge2$ satisfying
\be\label{ourcase} b_{FS}(a)<b<a+1, \quad\text{if}~a<0;\quad a<b<a+1, \quad\text{if}~0\le a<a_c, \ee
one has
    \[c_{d,a,b}\ge\frac{\beta_{a,b}}d.\]
An explicit choice of $\beta_{a,b}$ is given in \eqref{eq:main-stability-constant} below.
\end{theorem}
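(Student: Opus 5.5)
We transfer the problem to the cylinder $\CR$ through the Emden--Fowler change of variables $f(x)=|x|^{a-a_c}w(\ln|x|,x/|x|)$. This turns $E_a(f)$ into
\[
\int_\CR\bigl(|\pa_tw|^2+|\nabla_\omega w|^2+\La w^2\bigr),
\]
turns the $L^q$ term into $\int_\CR|w|^q$, and maps $\MR_{a,b}$ onto $\{c\,w_*(\cdot-t_0)\}$. Here $w_*(t)=C\cosh(\tfrac{(q-2)\sqrt\La}{2}t)^{-2/(q-2)}$ is the one-dimensional soliton. The argument then has three parts, following \cite{Dolbeault etc=2025}: a local estimate in a relative neighborhood of radius $\delta$ independent of $d$, a treatment of the far regime, and a quantitative local-to-global step.

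\emph{Local estimate.} For $\dist(w,\MR)\le\delta\nm w$ we write $w=c(w_*+r)$, with $r$ orthogonal to $w_*$ and to $\pa_tw_*$. We decompose $r$ in spherical harmonics of degree $k$, with eigenvalues $k(k+d-2)$. The linearized operator $-\pa_t^2+k(k+d-2)+\La-(q-1)w_*^{q-2}$ is a Pöschl--Teller operator, so its spectrum is explicit. Its normal spectral gap in the $D_a^{1,2}$ inner product is at least $\mu_{0,2}$ on the radial sector. On the $k=1$ sector the gap is governed by $b-b_{FS}(a)$, or by $b-a$ when $a\ge0$. We must check that each gap, written as a ratio relative to the energy, is $\gtrsim 1/d$ uniformly in $d$. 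For large $k$ the potential term is dominated by $k(k+d-2)\ge d-1$, and since $s\sim\sqrt{d/\La}$ the relevant normalized gaps behave like $c_{a,b}/d$.

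The quadratic remainder has to be controlled with $d$-independent constants. We cannot use a crude Taylor bound $|r|^q$, because $q\to2$ as $d\to\infty$ and Sobolev constants on the cylinder degenerate. Instead we use the elementary inequalities of \cite{Dolbeault etc=2025} on $(1+r)^q$, split according to whether $|r|$ is small or large relative to $w_*$. This is the \emph{three amplitude ranges} decomposition. The large range is controlled by the gap in the weighted Hardy/CKN inequality itself, whose constants on the cylinder are explicit in $\La$ and $d$. The middle range is handled by Hölder interpolation with $q-2\sim 4(1+a-b)/d$, which produces factors $q-2$ that exactly match the $1/d$ scale. This step, obtaining a cubic-error bound with a $d$-independent $\delta$ and constant $O(1/d)$, is where we expect the main obstacle. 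The difficulty is that the $q$-dependence of the error terms must be balanced against spectral gaps of order $1/d$.

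\emph{Local-to-global.} For $\dist(w,\MR)>\delta\nm w$ we need $D(w)\gtrsim \delta^2E(w)/d$. The cylinder has only $t$-translations and no conformal group, so competing symmetries are not available. We instead use the continuous symmetric decreasing rearrangement on $\S^{d-1}$, which does not increase the deficit and moves continuously toward angularly symmetric functions. Along this flow we find a first time at which the distance equals $\delta$, and then apply the local estimate there. If the flow never reaches the $\delta$-neighborhood, its endpoint is a function of $t$ alone. In that case we finish with a one-dimensional rearrangement in $t$ and the explicit one-dimensional stability of the soliton, whose constant depends only on $a,b$ after normalization.

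Combining the three parts gives $c_{d,a,b}\ge\beta_{a,b}/d$ with an explicit $\beta_{a,b}$.
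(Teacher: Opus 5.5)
Your local-to-global step fails. Continuous symmetric decreasing rearrangement on the fibers $\{t\}\times\S^{d-1}$ (cap symmetrization) does preserve $L^q$ norms and does not increase the energy. But its endpoint is only symmetric about an axis, i.e. a function of $(t,\om\cdot e)$. It is never a function of $t$ alone unless $w$ already was one.

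No rearrangement argument of this kind can work. Cap symmetrization is available for every $\La$ and $q$. If it ended at functions of $t$ alone, the CKN infimum would always be attained by radial functions, contradicting symmetry breaking for $a<0$, $a<b<b_{FS}(a)$. So your flow need not ever enter the relative $\delta$-neighborhood; a function of $t$ alone far from $\MR$ is not moved at all. The closing ``1D rearrangement plus explicit 1D soliton stability'' is also not available: it is the radial case of the same global problem, uniformly as $q\downarrow2$.

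What you need is a path with $\nm{\cdot}_q=1$ and energy at most $E_\La(u)$ that provably \emph{ends at an optimizer}. In the global step the non-degeneracy hypothesis enters only through rigidity of positive critical points. You are right that competing symmetries are unavailable. The paper replaces them as follows:
\begin{itemize}
\item axial continuous Steiner symmetrization in $t$ brings $u$ into the cone $\mathcal K$ of even, $|t|$-decreasing functions;
\item the normalized inverse iteration $T(u)=\mathcal A^{-1}(u^{q-1})/\nm{\mathcal A^{-1}(u^{q-1})}_q$, with $\mathcal A=-\pa_t^2-\Delta_\om+\La$, satisfies $E_\La(Tu)\le E_\La(u)$, and is joined to $u$ by a normalized linear interpolation whose energy stays below $E_\La(u)$;
\item subcritical compactness of energy-bounded subsets of $\mathcal K$, together with the Dolbeault--Esteban--Loss rigidity theorem, identifies the limit as $V/\nm V_q$.
\end{itemize}
At a crossing point $w$ with $\rho(w):=\dist_\La(w,\MR)^2/E_\La(w)=\delta_0$ one then gets $D_{\La,q}(u)/\dist_\La(u,\MR)^2\ge1-S_{\La,q}/E_\La(u)\ge1-S_{\La,q}/E_\La(w)=\delta_0D_{\La,q}(w)/\dist_\La(w,\MR)^2$.

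Two further ingredients are missing.

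First, the pointwise three-range inequality and any rearrangement only handle nonnegative functions ($r\ge-V$). You need the positive/negative-part splitting $c_{\La,q}\ge\min\{\tfrac12c^+_{\La,q},\,1-2^{2/q-1}\}$, which costs only $1-2^{2/q-1}\ge(1+a-b)/d$.

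Second, the middle range cannot be closed by H\"older interpolation. The piece $r_2$ is not orthogonal to $V$ or $V'$, and its coefficient $q-1+C\theta$ has $C$ large (depending on the truncation levels), far beyond the gap $\mu_3-(q-1)\gtrsim\theta$. The paper fixes a cutoff $\mu_{K,0}$ with $K$ independent of $d$ and proves two $d$-independent facts:
\begin{itemize}
\item a bound on the number of distinct eigenvalues below $\mu_{K,0}$;
\item a fourth-moment bound for whole eigenspaces below $\mu_{K,0}$, from hypergeometric radial identities plus a reverse H\"older inequality for spherical harmonics.
\end{itemize}
Since $r_2$ lives on $\{r>\gamma V\}$, which has small $V^q$-measure, its low-mode part carries at most half of its weighted mass. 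The remainder gains $\mu_{K,0}-1\ge K\theta$, which beats the large coefficient.

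The small range likewise needs control of the lost orthogonality of $r_1$. Also, $s=\sqrt{1+(d-1)/\La}\to1$; it is not $\sim\sqrt{d/\La}$.
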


The order in Theorem \ref{thm:main} agrees with a spectral upper bound:
\be\label{eq:intro-subcritical-upper}
 c_{d,a,b}\le\frac{2(q-2)}{3q-4}
 =\frac{4(1+a-b)}{d+4(1+a-b)}.
\ee
To see this, use the cylinder formulation of Section \ref{sec:cylinder} and take a nonzero radial eigenfunction $\psi$ corresponding to
\[
 \mu_{0,2}=\frac{(q-1)(3q-4)}q;
\]
compare \cite[Proof of Proposition 4.1]{Wei-Wu=2024}. It is orthogonal in the energy inner product to $V$ and $V'$, the tangent directions generated by amplitude and dilation. The second variation and the local normal-distance expansion give
\[
 \begin{aligned}
 \nm{V+\varepsilon\psi}_\La^2-S_{\La,q}\nm{V+\varepsilon\psi}_q^2
 &=\varepsilon^2\nm\psi_\La^2\left(1-\frac{q-1}{\mu_{0,2}}\right)+o(\varepsilon^2),\\
 \dist_\La(V+\varepsilon\psi,\MR)^2
 &=\varepsilon^2\nm\psi_\La^2+o(\varepsilon^2).
 \end{aligned}
\]
%Here the normal-distance expansion follows from the smooth amplitude and translation parametrization of $\MR$ near $V$. 
Taking $\varepsilon\to0$ proves \eqref{eq:intro-subcritical-upper}. Consequently, for each fixed pair of subcritical weights covered by Theorem \ref{thm:main}, the optimal dimensional order is $d^{-1}$.

The critical diagonal $a=b>0$ has a different dimensional scale. Its first angular mode, which becomes a translation mode in the unweighted Sobolev problem, produces a normal spectral gap of order $d^{-2}$. The next theorem gives matching lower and upper orders on this diagonal.

\begin{theorem}\label{thm:critical}
Fix $a>0$. For every integer $d>2+2a$, consider the critical case $b=a$, so that $q=2^*$, and set
\be\label{eq:critical-parameters}
 \alpha:=1-\frac{2a}{d-2}.
\ee
With $s$ as in \eqref{eq:mass-parameter}, one has
\be\label{eq:critical-two-sided}
\min\left\{\frac{\beta_{\mathrm{Sob}}}{25d^2},\frac{\alpha^{-2}-1}{24d+\alpha^{-2}-1}\right\}
\le c_{d,a,a}
\le\frac{(2s-q)(s+q-1)}{s(2s+q-2)}.
\ee
In particular, the constant
\be\label{eq:critical-beta}
\beta_a:=\frac1{25}\min\{\beta_{\mathrm{Sob}},4a\}>0
\ee
depends only on $a$ and satisfies $c_{d,a,a}\ge\beta_a/d^2$ in every such dimension. For fixed $a>0$,
\be\label{eq:critical-upper-asymptotic}
\frac{(2s-q)(s+q-1)}{s(2s+q-2)}
=\frac{16a}{d^2}+O_a(d^{-3})\qquad(d\to\infty).
\ee
Consequently, the order $d^{-2}$ is optimal.
\end{theorem}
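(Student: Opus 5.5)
The plan rests on the radial power transformation mentioned in the abstract. For $f\in D_a^{1,2}(\R^d)$, write $x=r\omega$ and define $F(\rho\omega):=\kappa f(r\omega)$ with $\rho=r^\alpha$, where $\alpha=1-\frac{2a}{d-2}$ and $\kappa$ is a normalization. A direct computation in polar coordinates then gives the identities
\[
\int_{\R^d}|x|^{-bq}|f|^{2^*}\rd x=c_1\int_{\R^d}|F|^{2^*}\rd y
\]
and
\[
\int_{\R^d}|x|^{-2a}|\nabla f|^2\rd x=c_2\Bigl(\int|\pa_\rho F|^2\rd y+\alpha^{-2}\int\rho^{-2}|\nabla_\omega F|^2\rd y\Bigr).
\]
Normalizing the constants (scaling $\kappa$ so that $c_1=c_2$-compatible), the Catrina--Wang identity $S_{d,a,a}=S_d$ on the radial class translates into
\[
D_{d,a,a}(f)=D_{\mathrm{Sob}}(F)+(\alpha^{-2}-1)A(F),\qquad A(F):=\int\rho^{-2}|\nabla_\omega F|^2\rd y.
\]
Since $a>0$ we have $\alpha<1$, so the angular term has a positive coefficient. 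The map $f\mapsto F$ is an isometry from $D_a^{1,2}$ onto $D^{1,2}$ with the modified norm $\|\pa_\rho F\|^2+\alpha^{-2}A(F)$. It sends $\MR_{a,a}$ exactly to the centered manifold $\mathcal B_0$, because radial Aubin--Talenti bubbles correspond to $W_\tau$.

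Next we compare distances, writing $\delta:=\dist_{D_a^{1,2}}(f,\MR_{a,a})$. Let $B=cU_{z,\lambda}\in\mathcal B$ be a nearest Sobolev optimizer to $F$; by \eqref{eq:intro-sobolev-dimension}, $D_{\mathrm{Sob}}(F)\ge \beta_{\mathrm{Sob}}d^{-1}\|F-B\|^2$. We then bound $\delta^2$ by three contributions:
\begin{itemize}
\item the transformed distance from $F$ to $cU_{0,\lambda}$, with the angular part weighted by $\alpha^{-2}$;
\item the quantity $\|F-B\|^2$, which enters through a factor involving $\alpha^{-2}$;
\item the displacement cost $\|U_{z,\lambda}-U_{0,\lambda}\|^2$.
\end{itemize}

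The key geometric estimate controls the displacement. The translation component of $B$ lives in the first spherical harmonics, and its angular energy is comparable to $A(B)$ up to a factor of order $d$: for the first harmonics, $\int\rho^{-2}|\nabla_\omega\cdot|^2$ equals $(d-1)$ times the $L^2(\rho^{-2})$ mass. Combined with Hardy's inequality, this should give
\[
\|B-cU_{0,\lambda}\|^2\lesssim d\,A(B),\qquad A(B)\le 2A(F)+2A(F-B).
\]
The term $A(F-B)$ is absorbed by Hardy and $\|F-B\|^2$. We then split into two cases, according to whether the Sobolev deficit or the angular term dominates:
\begin{itemize}
\item If $\beta_{\mathrm{Sob}}d^{-1}\|F-B\|^2$ dominates, we use $D\ge D_{\mathrm{Sob}}$, losing one further factor of $d$ from the displacement estimate.
\item Otherwise we use $D\ge(\alpha^{-2}-1)A(F)$. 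This produces the term $\frac{\alpha^{-2}-1}{24d+\alpha^{-2}-1}$ after optimizing the convex combination of the two lower bounds.
\end{itemize}
Tracking the numerical constants should yield the constant $25$ in \eqref{eq:critical-two-sided}. Finally, $\alpha^{-2}-1\ge 4a/d$ for large $d$, which gives $\beta_a/d^2$. \textbf{This step is the main obstacle:} one must show the optimal centered bubble is comparable with the optimal translated one, uniformly in $d$, with only one extra power of $d$ lost.

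For the upper bound, we pass to the cylinder and perturb $V$ by $\varepsilon\psi$, where $\psi=V_1(t)\,\omega_1$ is the first angular (degree-one) eigenfunction orthogonal to $V,V'$. The linearized operator on degree-$\ell$ modes has explicit eigenvalues, which are computed via the variable $s$ from \eqref{eq:mass-parameter}. As in the argument for \eqref{eq:intro-subcritical-upper}, the quotient tends to $1-\frac{q-1}{\mu_{1}}$, and this simplifies to $\frac{(2s-q)(s+q-1)}{s(2s+q-2)}$.

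Expanding in $d$ uses $\Lambda=(\frac{d-2}{2}-a)^2$ and $q=\frac{2d}{d-2}$:
\[
s=1+\frac{2(d-1)}{(d-2-2a)^2}+O(d^{-2}\cdot\ldots),\qquad 2s-q=\frac{8a}{d^2}+O(d^{-3}),
\]
while the remaining factor tends to $\frac{(1+1)}{(2)}\cdot\ldots$, giving $16a/d^2$. This asymptotic is a routine Taylor expansion.
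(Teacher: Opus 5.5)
Your architecture for the lower bound matches the paper's: the power change $y=|x|^{\alpha-1}x$, the splitting of the deficit into a Sobolev deficit plus $(\alpha^{-2}-1)A(F)$, Sobolev stability against the full manifold $\mathcal B$, and then a centering estimate. Your upper-bound perturbation along the first angular eigenfunction is also the paper's. The genuine gap is the step you flag as the main obstacle, and the route you suggest for it cannot work.

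You need, uniformly in the size of the translation, $r(B)^2\le C\,d\,A(B)$ for every $B\in\mathcal B$. Take a first-harmonic mode $g=h(\rho)\omega_1$. Hardy's inequality gives $A(g)=(d-1)\int\rho^{-2}g^2\le\frac{4(d-1)}{(d-2)^2}\int|\nabla g|^2$. This bounds the angular energy \emph{by} the full energy, so it only shows that a factor of order $d$ must be lost. What you need is the reverse bound $\int|\nabla g|^2\lesssim d\,A(g)$, which fails for general profiles and holds only for the specific bubble profile. Moreover, saying that the translation component lives in the first harmonics is only a linearization, valid when $|z|\ll\lambda$; for $|z|\gtrsim\lambda$ a separate argument is required.

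The paper handles both regimes by explicit computation on $U_{te_1}$:
\begin{itemize}
\item Writing $A(U_{te_1})/E(U_0)$ as an integral over $(\rho,\omega)$, it obtains $A(U_{te_1})\ge\frac1{18}\min\{t^2,1\}E(U_0)$.
\item Comparison with $0$ and integration along the translation path give $r(U_{te_1})^2\le\min\{E(U_0),t^2\|\nabla\pa_1U_0\|_2^2\}$. Here $\|\nabla\pa_1U_0\|_2^2/E(U_0)=\frac{d^2-4}{4(d+1)}\le\frac d4$, and this beta-integral ratio is where the extra factor $d$ comes from.
\item Together these give $r(B)^2\le 6d\,A(B)$. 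Then $r(F)^2\le 2E(F-B)+2r(B)^2$ and $A(B)\le 2A(F)+2E(F-B)$ yield $r(F)^2\le 25d\,e(F)^2+24d\,A(F)$, which produces the constants $25$ and $24$.
\end{itemize}

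There are also three smaller points.
\begin{itemize}
\item To reach the stated explicit bound, use the exact identity $\dist_{D_a^{1,2}}(f,\MR_{a,a})^2=\alpha\bigl(r(F)^2+(\alpha^{-2}-1)A(F)\bigr)$. It holds because $A(F-G)=A(F)$ for every radial $G$. If, as your second bullet suggests, you bound the weighted norm by $\alpha^{-2}$ times the Dirichlet norm, you lose a factor $\alpha^2$, and the first term is no longer $\beta_{\mathrm{Sob}}/(25d^2)$.
\item The identity $S_{d,a,a}=S_d$ is Catrina--Wang's statement for $a<0$. For $a>0$ one has $S_{d,a,a}=\alpha^{1+2/q}S_d$, which follows from the transformation itself, since $A\ge0$ with equality on centered bubbles. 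No choice of $\kappa$ changes this, because both the energy and $\|F\|_q^2$ are $2$-homogeneous.
\item The expansion is $s=1+\frac2d+\frac{8a+4}{d^2}+O_a(d^{-3})$, so $2s-q=\frac{16a}{d^2}+O_a(d^{-3})$, not $\frac{8a}{d^2}$. Expanding $s$ only up to $O(d^{-2})$ is too coarse for a quantity of order $d^{-2}$.
\end{itemize}
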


Our third result concerns the degenerate case. Here $a<0$ is fixed, while $b$ follows the dimension-dependent Felli--Schneider curve. We occasionally write $b_{FS}^{(d)}(a)$ to display this dependence explicitly.

\begin{theorem}\label{thm:degenerate}
Fix $a<0$. For every integer $d\ge2$, take $b=b_{FS}^d(a)$ and set
\be\label{eq:fs-parameters}
 q=2s=2\sqrt{1+\frac{d-1}{\La}},\qquad n:=\frac2{q-2}.
\ee
There is a constant $\beta_a^{FS}>0$, depending only on $a$, such that
\be\label{eq:fs-two-sided}
 \frac{\beta_a^{FS}}{d^3}
 \le\widetilde c_{d,a,b_{FS}^d(a)}
 \le\frac{(2n+2-d)(2n+3)^2}
 {2n(n+1)(n+2)^2(d+2)}.
\ee
For fixed $a<0$, the upper bound satisfies
\be\label{eq:fs-upper-asymptotic}
 \frac{(2n+2-d)(2n+3)^2}
 {2n(n+1)(n+2)^2(d+2)}
 =\frac{32|a|}{d^3}+O_a(d^{-4})\qquad(d\to\infty).
\ee
Consequently, the optimal dimensional order of the quartic stability constant is $d^{-3}$.
\end{theorem}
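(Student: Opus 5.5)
The lower bound will come from the local-to-global strategy already used for Theorem \ref{thm:main}, now with quartic bookkeeping; the upper bound from an explicit perturbation along the extra zero mode. Via the Emden--Fowler change of variables $f(x)=|x|^{-(a_c-a)}u(\log|x|,x/|x|)$ I pass to the cylinder $\CR$. There $E_a$ becomes $\nm u_\La^2=\int_\CR(|\pa_t u|^2+|\nabla_\theta u|^2+\La u^2)$, the manifold $\MR_{a,b}$ becomes $\{cV(\cdot-t_0)\}$, and $V$ is the one-dimensional ground state of $-V''+\La V=\kappa V^{q-1}$. On the Felli--Schneider curve $q=2s$, the linearization $-\pa_t^2-\Delta_\theta+\La-(q-1)\kappa V^{q-2}$ has eigenvalue $0$ on the first spherical harmonics $\ell=1$, with $d$ extra zero modes $\phi_j=V^{(q)/2}(t)\,\theta_j$, up to normalization. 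By the Pöschl--Teller structure these are explicit powers of $\operatorname{sech}$.

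\textbf{Upper bound.} I test the quotient on $u_\ve=V+\ve\phi_1+\ve^2 w$, where $w$ is chosen to kill the cubic term and to optimize the quartic one.
\begin{itemize}
\item $w$ decomposes into an $\ell=0$ part and an $\ell=2$ part. Each part solves an explicit ODE whose solution is again a combination of powers of $\operatorname{sech}$ with the exponent $n=2/(q-2)$.
\item Expanding the deficit to order $\ve^4$ gives $\ve^4 Q$, with $Q$ a ratio of Beta integrals in $n$. The distance squared is $\ve^2\nm{\phi_1}_\La^2$, and $E_a\to\nm V_\La^2$.
\item The quotient tends to $\nm V^2 Q/\nm{\phi_1}^4$. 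Evaluating the Beta functions, and using spherical averages $\int\theta_1^4=3/(d(d+2))$ (this is where the $d+2$ enters), should give the stated rational function of $n$ and $d$.
\item The asymptotic \eqref{eq:fs-upper-asymptotic} is a routine expansion. Since $s\approx\sqrt{d/\La}$, we get $n\approx\sqrt{\La/d}\to0$ slowly, and the factor $2n+2-d$ must be read accordingly. I will simply expand in $1/d$ using $\La=(d/2-1-a)^2$, so that $n$ has a limit, and check that the leading term is $32|a|/d^3$.
\end{itemize}

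\textbf{Lower bound.} I follow the three-regime decomposition.
\begin{itemize}
\item \emph{Local regime.} Fix a dimension-free $\delta$ and take $u$ with $\dist(u,\MR)\le\delta\nm u$. Instead of the exact $\MR$, I use a uniform chart of approximate optimizers $V_{\ve,t_0}$. These are the curve $V+\ve\phi+\ve^2w$ above, built for a general $\ell=1$ direction $\phi$, which by rotation invariance reduces to a single one. I decompose $u=c(V_{\ve}+r)$ with $r$ orthogonal to the tangent space of the chart. The spectral gap on the complement of $\{V,V',\phi_j\}$ gives a quadratic bound in $r$, with gap $\gtrsim d^{-1}$ inherited from the nondegenerate analysis. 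The chart itself contributes $Q\ve^4$, with $Q\gtrsim d^{-3}$ after the same Beta-function computation.
\item \emph{Cross terms.} The main technical point is to control the cross terms between $\ve$ and $r$ without losing powers of $d$. For this I use a truncation estimate: the Euler--Lagrange residual of $V_\ve$ is $O(\ve^3)$ in the dual norm. Cubic cross terms are then absorbed by Cauchy--Schwarz, $\ve^3\nm r\le\eta\ve^4+\eta^{-1}\ve^2\nm r^2$. The remaining higher-order terms in $r$ are handled with $|\,|1+x|^q-1-qx-\dots|$ bounds whose constants grow at most polynomially in $q$; because $q\to2$ as $d\to\infty$, they are in fact uniform. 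Altogether this gives $E\cdot D\gtrsim d^{-3}\dist^4$ locally.
\item \emph{Amplitude ranges.} Following \cite{Dolbeault etc=2025}, the regimes are small distance (handled by the local estimate), large distance (where the deficit is bounded below by a quantity of order $d^{-1}\nm u^2$ directly), and intermediate. The intermediate range is handled by a continuous rearrangement flow, as in Theorem \ref{thm:main}. This flow connects $u$ to its symmetric-decreasing profile along the $t$-direction, with monotone energy and continuous distance. It therefore reaches the boundary of the local neighborhood, where the local bound applies. In the quartic setting the distance enters as $\dist^4\le\nm u^2\dist^2$, so the passage loses no extra power of $d$.
\end{itemize}

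The main obstacle is the dimension-uniform local quartic estimate. The sub-leading eigenvalue gap above the zero modes shrinks like $d^{-1}$, while the quartic coefficient is $d^{-3}$; each must be tracked precisely, and the interaction has to be balanced by choosing $\eta\sim d^{-1}$.
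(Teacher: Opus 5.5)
\textbf{Main gap: the chart.} The local quartic estimate does not hold on a dimension-independent neighborhood with your chart. The curve $V+\ve\phi+\ve^2w$ is a second-order Taylor polynomial. Its Euler--Lagrange residual is $O(\ve^3)$, but the dimensional constant is larger than your balancing allows.

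Measure sizes by $|\tau|\asymp\dist$ as in the paper, with $X=\sqrt{1-z^2}\,\omega$. Since $\mathcal A$ of a quadratic polynomial in $\ve$ has no cubic term, the degree-three spherical-harmonic part of the cubic residual comes only from $-[(V+\ve\phi+\ve^2w)^{q-1}]_3$. With $w$ solving the second-order equation, this is, up to negligible terms, comparable to $(q-2)V^{q-1}(\tau\cdot\sqrt d\,X)^3$. That component is orthogonal to the whole tangent space of your chart, which contains only degrees $0,1,2$. Its dual norm is $\asymp\theta|\tau|^3\asymp|\tau|^3/d$.

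To absorb $R(r)$ against $Q\ve^4\asymp|\tau|^4/d^3$ and $\theta\nm{r}^2$ you need $\theta^{-1}\|R\|_*^2\ll|\tau|^4/d^3$, that is, $\|R\|_*\ll|\tau|^2/d^2$. This is the discriminant condition, so no choice of $\eta$ avoids it. With $\|R\|_*\asymp|\tau|^3/d$ it holds only for $|\tau|\lesssim d^{-1}$. The relative radius then shrinks like $d^{-2}$, and the factor $\delta^2$ in the local-to-global step turns $d^{-3}$ into $d^{-7}$. Taylor charts of any fixed higher order only improve the radius to a smaller negative power of $d$.

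The missing ingredient is the full nonlinear family of approximate conformal profiles
\[
 \Phi_{\sigma,\tau}=V(0)\Bigl[\cosh\Bigl(\tfrac{\sqrt\La}{n}\bigl(t-\tfrac{\sigma}{\sqrt d}\bigr)\Bigr)-\tfrac{\tau\cdot\omega}{\sqrt d}\Bigr]^{-n}.
\]
These satisfy $\mathcal A\phi=P_\tau\phi^{q-1}$ with an explicit, almost constant coefficient $P_\tau$. Hence $\|R\|_*\le C_a|\tau|^2/d^3$, and $\theta^{-1}\|R\|_*^2\le C_a|\tau|^4/d^5$ is absorbed by $\mathsf D(\phi)\asymp|\tau|^4/d^3$ on a fixed ball (Lemma \ref{lem:fs-profile} and Step 3 of Lemma \ref{lem:fs-local}). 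The chart also needs uniform Gram bounds and tilted sub-Gaussian moments for whole tangent directions, so that no factor of $d$ enters through the $d$ angular coordinates (Lemma \ref{lem:fs-chart}).

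\textbf{Other steps that fail as written.}
\begin{itemize}
\item \emph{Higher-order terms in $r$.} Taylor bounds are not uniform merely because $q\to2$. The coercivity is only $\theta\asymp d^{-1}$. For $2<q<3$ the remainder contains $(q-2)V^{q-3}|r|^3$, whose integral is not controlled by the energy. It also contains $|r|^q$, whose integral is bounded only by $\mathsf E(r)^{(q-2)/2}\mathsf E(r)$, and this is not $o(\theta)\mathsf E(r)$ on any fixed ball.
\item \emph{The three ranges.} The three ranges of Dolbeault--Esteban--Figalli--Frank--Loss are amplitude ranges of $h/\phi$ inside the local estimate, not distance regimes. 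The bounded part uses the transverse gap and the tail uses the sharp inequality itself. The intermediate part needs the low-mode fourth-moment bounds and spectral cutoff of Lemma \ref{lem:fs-lowmom}; see Lemma \ref{lem:fs-robust}.
\item \emph{Endpoint of the path.} Steiner symmetrization in $t$ ends at a function that is even and decreasing in $t$, which need not be near $\MR$. The path reaches $V/\nm{V}_q$ only after the normalized inverse iteration, using endpoint rigidity at $b=b_{FS}(a)$ (Lemma \ref{lem:fs-energy-path}).
\item \emph{Large distance.} There is no direct $d^{-1}$ deficit bound there. The profile $\phi_{0,\tau}$ with $|\tau|$ small but fixed has $\rho\gtrsim|\tau|^2$ and $\mathsf D\asymp|\tau|^4/d^3$. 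So the path must handle every $u$ with $\rho(u)>\delta$.
\item \emph{Omitted steps.} You omit the quartic sign reduction of Proposition \ref{prop:fs-sign-reduction}; it costs nothing, since $2(1-2^{2/q-1})\sim4\log2/d$. You also omit the finitely many low dimensions, which the paper covers by Frank--Peteranderl's fixed-dimension positivity.
\end{itemize}

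\textbf{Upper bound.} Your route works. Because $\phi$ is a zero mode, the $\ve^4$ coefficient involves only the first two orders of the curve. Taking the second-order term of $V(1-\ve\omega_d/\cosh(\sqrt\La\,t/n))^{-n}$ gives exactly the stated value, and optimizing $w$ can only lower it.

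Your asymptotics are wrong, however. One has $s=1+2/d+O(d^{-2})$, so $n=\tfrac d2-1-2a+O_a(d^{-1})\to\infty$; $n$ neither tends to $0$ nor has a finite limit. The fact that drives the expansion is $2n+2-d\to-4a$.
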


The first two theorems are uniform in $d$ for fixed weights; Theorem~\ref{thm:degenerate} is uniform in $d$ for fixed $a$ with $b=b_{FS}^d(a)$ along the FS curve. The constants may depend on these fixed parameters. In particular, the subcritical estimate is not asserted to be uniform as the weights approach the Felli--Schneider curve or the critical diagonal. At $a=b=0$, Euclidean translations belong to the optimizer manifold and their tangent directions are removed in the Sobolev stability problem. For fixed $a=b>0$, translations are no longer optimizer symmetries, and the corresponding angular perturbations contribute to the normal distance. This explains why the $d^{-2}$ order is compatible with the $d^{-1}$ Sobolev result \eqref{eq:intro-sobolev-dimension}. On the FS curve these angular directions enter the kernel of the Hessian, so a quartic estimate is required. The upper coefficients $16a$ and $32|a|$ in Theorems~\ref{thm:critical} and~\ref{thm:degenerate} are not claimed to identify the limits of the respective rescaled global constants.

\subsection{Comparison with Sobolev stability and strategy of proof}
The proof of Theorem \ref{thm:main} follows the local-to-global strategy of \cite{Dolbeault etc=2025}. In their Sobolev analysis, stereographic projection sends an optimizer to a constant on $\S^d$, and spherical harmonics diagonalize the second variation. For CKN, the Emden--Fowler transformation leads to the cylinder $\CR$ in \eqref{eq:mass-parameter} and a nonconstant axial optimizer $V$. The generalized eigenvalue problem
\[
 (-\pa_t^2-\Delta_\om+\La)v=\mu V^{q-2}v
\]
contains both angular and radial indices. To make this terminology precise, let $\Theta_{i,l}$ be a real orthonormal basis of spherical harmonics on $\S^{d-1}$, with
\be\label{equ-eigen-S}
 -\Delta_\om\Theta_{i,l}=i(i+d-2)\Theta_{i,l},
 \qquad i\ge0,\quad 1\le l\le l_{i,d},
\ee
where $l_{i,d}$ is the dimension of the degree-$i$ harmonic space. Every $v\in L^2(\CR)$ has the angular decomposition
\be\label{fourier}\begin{aligned}
 v(t,\om)&=\sum_{i=0}^{\infty}\sum_{l=1}^{l_{i,d}}
                   \phi_{i,l}(t)\Theta_{i,l}(\om),\quad\text{with}\\
 \phi_{i,l}(t)&=\int_{\S^{d-1}}v(t,\om)\Theta_{i,l}(\om)\rd\om.
\end{aligned}\ee
The index $i$ records the angular degree, while $l$ distinguishes harmonics of the same degree. The degree-zero harmonic is constant, so a radial function on the cylinder is independent of $\om$ and depends only on $t$. This corresponds to radial symmetry in the original Euclidean variables. For each fixed $i$, the one-dimensional eigenvalue problem for the axial coefficient has an index $j=0,1,\ldots$. Thus the separated eigenmodes have the form
\[
 v_{i,j,l}(t,\om)=f_{i,j}(t)\Theta_{i,l}(\om),
 \qquad\text{with generalized eigenvalue }\mu_{i,j};
\]
see \eqref{mu-ij}. We call $j$ the radial, or axial, index. The coefficients $f_{i,j}$ depend only on $t$, and the complete modes are radial precisely when $i=0$.

The first two radial branches, $(i,j)=(0,0)$ and $(0,1)$, are represented by $V$ and $V'$, with
\[
 \mu_{0,0}=1,\qquad\mu_{0,1}=q-1.
\]
They generate amplitude changes and axial translations of the optimizer. After these directions have been removed, the first radial branch is $(0,2)$. The first angular branch means $(1,0)$: the degree-one harmonic space is spanned by $\om_1,\ldots,\om_d$, and this branch consists of the $d$ modes $f_{1,0}(t)\om_k$, $1\le k\le d$. In the non-degenerate case, the first normal branch is therefore determined by
\[
 \mu_3=\min\{\mu_{0,2},\mu_{1,0}\}>q-1.
\]
Here normal directions are orthogonal to $\operatorname{span}\{V,V'\}$ in the energy inner product, and their smallest quadratic coercivity factor is $1-(q-1)/\mu_3$. This is useful for the spectral gap control.

%\paragraph{Local analysis.}
As in \cite{Dolbeault etc=2025}, a direct Taylor expansion is replaced by a decomposition of the relative perturbation $r/V$ into three amplitude ranges. The bounded part is controlled by the spectral gap, the intermediate part has small support in the measure with density proportional to $V^q$, and the large tail is controlled by the sharp inequality itself. The estimates are combined so that the remainder can be absorbed at the coercivity scale $q-2$. Lemma \ref{lem:bound} gives a positive lower bound, independent of $d$ at fixed $a,b$, for
\[
 \frac{\mu_3-(q-1)}{q-2}.
\]
This replaces the explicit spherical spectral-gap estimate in the Sobolev argument.
A further ingredient is the weighted fourth-moment estimate in Lemma \ref{lemma:large eigenfunction}. It applies to every eigenfunction in a complete eigenspace below the cutoff $\mu_{K,0}$ and gives
\[
 \left(\int_\CR V^{q-4}v^4\rd\nu\right)
 \left(\int_\CR V^q\rd\nu\right)
 \le D_{K,a,b}\left(\int_\CR V^{q-2}v^2\rd\nu\right)^2.
\]
The proof combines radial hypergeometric identities with a reverse H\"older estimate for spherical harmonics, while grouping their full angular multiplicities. Together with a dimension-independent bound on the number of distinct eigenvalues below the cutoff, this yields the projection estimate for the intermediate truncation. These cylinder estimates account for the nonconstant weight in place of the spherical-harmonic estimates used in \cite{Dolbeault etc=2025}. They give Theorem \ref{thm:local} on a neighborhood whose radius is independent of $d$.

Rearrangement and diffusion provide two further sources of constructive estimates. Carlen and Loss \cite{Carlen-Loss=1990} used competing symmetries to construct optimizing sequences. Brock \cite{Brock=1995} introduced continuous Steiner symmetrization, and Brock and Solynin \cite{Brock-Solynin=2000} developed approximation by polarizations. These constructions form part of the background for the continuous-rearrangement strategy in \cite{Dolbeault etc=2025}.
Relative entropy and optimization over scales yield explicit stability estimates in \cite{Dolbeault-Toscani=2016}. Bonforte, Dolbeault, Nazaret and Simonov \cite{Bonforte-Dolbeault-Nazaret-Simonov=2025} developed constructive Gagliardo--Nirenberg--Sobolev stability through fast diffusion, regularity and entropy production. Their distances and normalization assumptions differ from the weighted energy distance used here; the cylinder argument below uses an energy-controlled path without requiring a quantitative convergence rate.

%\paragraph{From local to global stability.}
The second part retains the continuous-rearrangement principle of \cite{Dolbeault etc=2025}. On the cylinder, fiberwise axial Steiner symmetrization is combined with the normalized inverse map defined in \eqref{eq:inverse-map}.
The inverse map supplies regularization and an energy-controlled interpolation. Lemma \ref{lem:energy-path} constructs an $H^1$-continuous path of nonnegative, $L^q$-normalized functions from the initial function to an optimizer, with energy bounded above by its initial value. Subcritical compactness in an even, axially decreasing cone and rigidity of positive critical points \cite{Dolbeault etc=2016} identify the endpoint. Quantitative convergence rates are not needed. Crossing the boundary of the relative local neighborhood transfers the local lower bound to a global one without an additional dimension-dependent loss. Finally, a positive/negative-part argument, following \cite{Dolbeault etc=2025}, gives the estimate for arbitrary real-valued functions. The cylinder construction accounts for the unbounded axial variable and the nonconstant optimizer.

\vskip0.1in
\paragraph{\bf The critical case.}
The proof of Theorem \ref{thm:critical} uses the Sobolev stability theorem of \cite{Dolbeault etc=2025} through a direct geometric comparison. 
With $\alpha$ from \eqref{eq:critical-parameters}, we use the $b=a$ specialization of the radial power change in \cite[(3.1)]{Dolbeault etc=2016}:
\[
 y=|x|^{\alpha-1}x,\qquad f(x)=F(y).
\]
Define the angular energy by
\be\label{eq:angular-energy}
 A(F):=\int_{\R^d}\left|\nabla_y F-\frac{y}{|y|}\pa_rF\right|^2\rd y.
\ee
Then
the critical weighted deficit splits exactly as
\[
 D_{d,a,a}(f)=\alpha\bigl(D_{\mathrm{Sob}}(F)+(\alpha^{-2}-1)A(F)\bigr).
\]
The CKN optimizer manifold becomes the centered Sobolev manifold $\mathcal B_0$, and its weighted squared distance becomes
\[
 \alpha\bigl(\dist_{D^{1,2}}(F,\mathcal B_0)^2+(\alpha^{-2}-1)A(F)\bigr).
\]
The additional geometric estimate in Lemma \ref{lem:critical-center-control} bounds the squared distance to $\mathcal B_0$ by $25d$ times the squared distance to the full Sobolev manifold and $24d$ times the angular energy. It is this centering step that introduces an additional factor $d$ into the lower bound. The Sobolev coefficient $\beta_{\mathrm{Sob}}/d$ consequently contributes $\beta_{\mathrm{Sob}}/(25d^2)$. The angular contribution has the same scale, since
\[
 \alpha^{-2}-1=\frac{4a}{d}+O_a(d^{-2}),\qquad
 \frac{\alpha^{-2}-1}{24d+\alpha^{-2}-1}
 =\frac{a}{6d^2}+O_a(d^{-3}).
\]
Thus both terms in the lower bound \eqref{eq:critical-two-sided} have order $d^{-2}$. 

\vskip0.1in
\paragraph{\bf The degenerate case.}
We use \emph{zero mode} to denote the kernel of the Hessian of the deficit $D(u)$. Homogeneity and axial-translation invariance give the trivial zero modes $V$ and $V'$.
In the non-degenerate case, the third eigenvalue is larger than $q-1$, so that there are no other zero modes, which induces the orthogonality conditions \eqref{con-2}.
While on the FS curve, $\mu_{1,0}=q-1=\mu_{0,1}$, and the first angular modes become the additional zero modes
\[
 V^{q/2}(t)\om_k,\qquad 1\le k\le d;
\]
see \cite[(3.1)]{Frank-Peteranderl=2024}. 
These are called nontrivial zero modes because they do not arise from symmetries of the CKN optimizer manifold. 
The projection onto these additional modes and the resulting quartic analysis are the starting point of \cite[Propositions~3--4]{Frank-Peteranderl=2024}. To prove Theorem~\ref{thm:degenerate} with constants uniform in $d$, we represent them by the normalized approximate conformal profiles $\phi_{\sigma,\tau}$ of Section~\ref{sec:degenerate}. A modulation chart with a dimension-independent radius separates the profile parameters from a perturbation orthogonal to the full tangent space. In the non-degenerate subcritical case only amplitude and axial translation are removed, and the remaining directions have a positive quadratic gap; on the FS curve the angular kernel must also be included in this modulation.

The three-range truncation and the weighted low-mode fourth-moment estimates still control the transverse perturbation at scale $q-2\asymp d^{-1}$. 
The profile $\phi_{\sigma,\tau}$ is an approximate critical point, so its Euler--Lagrange residual must be retained. In the normalization of Section~\ref{sec:degenerate}, Lemma~\ref{lem:fs-robust} gives, when $\phi_{0,\tau}+h\ge0$, $h\perp_{\La,\nu_1}\mathcal T_{0,\tau}$, and $\|h\|_{q,\nu_1}$ is below a fixed dimension-independent threshold,
\[
 \mathsf D(\phi_{0,\tau}+h)
 \ge\mathsf D(\phi_{0,\tau})+c_a(q-2)\mathsf E(h)
 -\frac{C_a}{q-2}
       \|\mathcal A\phi_{0,\tau}-\phi_{0,\tau}^{q-1}\|_*^2.
\]
This is the counterpart of the subcritical local estimate, with a separate profile deficit controlling the zero-mode parameters.
The order $d^{-3}$ comes from that profile deficit $\mathsf D(\phi_{0,\tau})$. 
The logarithmic energy expansion \eqref{eq:fs-profile-log-energy} reads
\[
 \begin{aligned}
 \log\mathsf E(\phi_{0,\tau})
 ={}&\frac{n+1-d/2}{n+1}
 \left[-\log\left(1-\frac{|\tau|^2}d\right)
                       -\frac{|\tau|^2}d\right]
 +O_a\left(\frac{|\tau|^4}{d^4}\right).
 \end{aligned}
\]
Here $n\sim d/2$ and $n+1-d/2\to-2a>0$. The coefficient outside the brackets has order $d^{-1}$, while cancellation of the linear term inside the brackets leaves a term of order $|\tau|^4/d^2$. Since $\|\phi_{0,\tau}\|_{q,\nu_1}=1$, it follows that
\[
 \mathsf D(\phi_{0,\tau})
 =\mathsf E(\phi_{0,\tau})-1
 \asymp_a\frac{|\tau|^4}{d^3}.
\]
The three inverse powers are therefore already present in the fourth-order profile energy.
The residual estimate in Lemma~\ref{lem:fs-profile} gives
\[
 \frac1{q-2}
 \|\mathcal A\phi_{0,\tau}-\phi_{0,\tau}^{q-1}\|_*^2
 \le C_a\frac{|\tau|^4}{d^5}.
\]
This error can be absorbed into the profile deficit $\mathsf D(\phi_{0,\tau})$ for sufficiently large $d$. The remaining transverse term has order $d^{-1}\mathsf E(h)$ and hence controls $d^{-3}\mathsf E(h)^2$ on the fixed neighborhood. The chart then controls the fourth power of the distance by $C_a(|\tau|^4+\mathsf E(h)^2)$, yielding the local quartic estimate of Lemma~\ref{lem:fs-local}.

As in the non-degenerate subcritical proof, an energy-controlled path transfers the local estimate to a global one, and a positive/negative-part argument handles real-valued functions. At the FS boundary, endpoint rigidity identifies the limit of the path. The local-to-global factor $\delta_a^2$ is independent of $d$, while the sign-splitting coefficient $2(1-2^{2/q-1})\sim4\log2/d$ is larger in order than $d^{-3}$. These steps therefore preserve the dimensional order. Finally, an explicit approximate translation has computable hypergeometric energy and $L^q$ integrals; its fourth-order expansion gives the upper bound in \eqref{eq:fs-two-sided}. The non-degenerate subcritical and degenerate arguments share the truncation and global mechanisms, while the nonlinear treatment of the angular kernel produces the quartic $d^{-3}$ scale.

\vskip0.1in
\paragraph{\bf Organization of the paper.}
Section \ref{sec:cylinder} develops the cylinder formulation, the spectral gap and the weighted fourth-moment estimates. Section \ref{sec:local} proves the local stability estimate with a dimension-independent radius. Section \ref{sec:global} carries out the passage to global stability and proves Theorem \ref{thm:main}. Section \ref{sec:critical} proves the explicit critical bounds and Theorem \ref{thm:critical}. Section~\ref{sec:degenerate} proves the quartic bounds of Theorem~\ref{thm:degenerate} on the FS curve.

\section{The cylinder setting}\label{sec:cylinder}
\subsection{The CKN inequality on the cylinder}
Throughout this section, assume \eqref{case-1} and $(a,b)\ne(0,0)$.
Following \cite{Dolbeault etc=2016}, we reformulate the CKN inequality on the cylinder $\CR$ introduced in \eqref{eq:mass-parameter}.
With $\La$ as in \eqref{eq:mass-parameter}, condition \eqref{case-1} is equivalent to
\be\label{ourcase-1}
 2<q\le2^*,\qquad 0<\La<\La_{FS}:=\frac{4(d-1)}{q^2-4}.
\ee
Write $(t,\om)$ for the coordinates on $\CR$, and equip the cylinder with the product measure
\[
 \rd\nu=\rd t\,\rd\om,
\]
where $\rd\om$ is the usual surface measure on $\S^{d-1}$. Define
\[
 \abr{u,v}_\La:=\int_\CR\bigl(\pa_tu\pa_tv+\nabla_\om u\cdot\nabla_\om v+\La uv\bigr)\rd\nu,
 \qquad \nm u_\La^2:=\abr{u,u}_\La.
\]
For every fixed $\La>0$, this norm is equivalent to the usual $H^1(\CR)$ norm.

Under the Emden--Fowler transformation
\[
 x\mapsto(t,\om)=\left(\log|x|,\frac{x}{|x|}\right),
\]
a function $f\in D_a^{1,2}(\R^d)$ corresponds to
\[
 u(t,\om)=e^{(a_c-a)t}f(e^t\om)\in H^1(\CR).
\]
The transformation preserves the following quantities:
\be\label{equiv}\begin{aligned}
 \int_{\R^d}|x|^{-2a}|\nabla f|^2\rd x&=\nm u_\La^2,\\
 \int_{\R^d}|x|^{-bq}|f|^q\rd x&=\nm u_q^q:=\int_\CR|u|^q\rd\nu,\\
 \dist_{D_a^{1,2}}(f,\MR_{a,b})&=\dist_\La(u,\MR):=\inf_{v\in\MR}\nm{u-v}_\La,
\end{aligned}\ee
where
\be\label{MR}
 \MR=\{cV(\cdot-\tau):c\in\R,\ \tau\in\R\},
\ee
and
\be\label{V}
 V(t)=\left(\frac{q\La}{2}\right)^{1/(q-2)}
 \left[\cosh\left(\frac{q-2}{2}\sqrt\La\,t\right)\right]^{-2/(q-2)}.
\ee
The profile $V$ is the image of $W$ in \eqref{W}. By \eqref{equiv}, the sharp inequality constant is
\be\label{CKN-C-S}
 S_{\La,q}=\inf_{u\in H^1(\CR)\setminus\{0\}}\frac{\nm u_\La^2}{\nm u_q^2}=S_{d,a,b},
\ee
and the optimal stability constant satisfies
\be\label{CKN-C-c}
 c_{\La,q}=\inf_{u\in H^1(\CR)\setminus\MR}\ER(u)=c_{d,a,b},
\ee
where
\be\label{ER}
 \ER(u)=\frac{\nm u_\La^2-S_{\La,q}\nm u_q^2}{\dist_\La(u,\MR)^2}.
\ee
The optimizer manifold is $\MR$. Its normalized profile $V$, and every translate of $V$, satisfy
\be\label{equ-C}
 -\pa_t^2V-\Delta_\om V+\La V=V^{q-1}\qquad\text{on }\CR.
\ee

\subsection{The eigenvalue problem at an optimizer}
For the profile $V$ in \eqref{V}, consider
\be\label{equ-eigen}
 -\pa_t^2v-\Delta_\om v+\La v=\mu V^{q-2}v,\qquad v\in H^1(\CR).
\ee
The coefficient $q-1$ from the linearized equation is absorbed into $\mu$. Wei and Wu \cite[Proposition 3.1]{Wei-Wu=2024} studied this problem. We refine the estimates needed for the dimensional bounds. List the eigenvalues in increasing order:
\[
 0<\mu_1\le\mu_2\le\mu_3\le\cdots.
\]
Use the orthonormal spherical harmonics $\Theta_{i,l}$ from \eqref{equ-eigen-S} and the decomposition \eqref{fourier} introduced in Section~\ref{sec:introduction}.
By \eqref{equ-eigen} and \eqref{equ-eigen-S}, this expansion solves the eigenvalue problem if and only if each radial coefficient satisfies
\be\label{equ-eigen-R}\begin{aligned}
 -\pa_t^2\phi_{i,l}
 -\frac{\mu q\La}{2}\cosh^{-2}\left(\frac{q-2}{2}\sqrt\La\,t\right)\phi_{i,l}
 =-\bigl(\La+i(i+d-2)\bigr)\phi_{i,l}
 \qquad\text{on }\R.
\end{aligned}\ee
The negative eigenvalues of the one-dimensional operator on the left are
\[
 \sigma_j(\mu)=-\frac{\La(q-2)^2}{16}
 \left[\sqrt{1+\frac{8q\mu}{(q-2)^2}}-(1+2j)\right]^2,
\]
for the nonnegative integers $j$ satisfying
\[
 2j<\sqrt{1+\frac{8q\mu}{(q-2)^2}}-1;
\]
see \cite[p.~74]{Landau-Lifshitz=1958}, \cite[Section 4.2.2]{Frank etc=2022}, or \cite[p.~130]{Felli-Schneider=2003}. The strict inequality excludes the zero-energy threshold, which does not give an $L^2$ eigenfunction. A corresponding eigenfunction is
\[
\begin{aligned}
 \vp_j(\mu;t)
 ={}&\left[1-\tanh^2\left(\frac{q-2}{2}\sqrt\La\,t\right)\right]^{\frac{\sqrt{|\sigma_j(\mu)|}}{(q-2)\sqrt\La}}\\
 &\qquad \times\leftidx{_2}F_1\left(-j,\dfrac{4\sqrt{|\sigma_j(\mu)|}}{(q-2)\sqrt\La}+j+1;\dfrac{2\sqrt{|\sigma_j(\mu)|}}{(q-2)\sqrt\La}+1 ;\,\frac{1-\tanh\left(\frac{q-2}{2}\sqrt\La\,t\right)}2\right).
\end{aligned}
\]
Each negative eigenvalue is simple by one-dimensional Sturm--Liouville theory. Here $\leftidx{_2}F_1$ denotes the Gauss hypergeometric function, see \cite[Chapter 2]{book-1}. For $|z|<1$ and $d_3\notin\{0,-1,-2,\ldots\}$, it is defined by
\be\label{2-17}
 \leftidx{_2}F_1(d_1,d_2;d_3;z)
 :=\sum_{n=0}^{\infty}\frac{(d_1)_n(d_2)_n}{(d_3)_n\,n!}z^n,
\ee
where
\[
 (\xi)_n:=\begin{cases}
 1,&n=0,\\
 \xi(\xi+1)\cdots(\xi+n-1),&n\ge1.
 \end{cases}
\]
In particular, $(1)_n=n!$, and
\[
 (\xi)_n=\frac{\Gamma(\xi+n)}{\Gamma(\xi)}
 \qquad\text{if }\xi\notin\{0,-1,-2,\ldots\}.
\]
For the nonpositive integer first parameters used below, the hypergeometric series terminates and defines a polynomial.

Thus \eqref{equ-eigen-R} has a nonzero $H^1(\R)$ solution precisely when
\be\label{10-22-1}
 \La+i(i+d-2)=|\sigma_j(\mu)|
 =\frac{\La(q-2)^2}{16}
 \left[\sqrt{1+\frac{8q\mu}{(q-2)^2}}-(1+2j)\right]^2
\ee
for an admissible $j$. Solving for $\mu$ gives
\be\label{mu-ij}
 \mu_{i,j}=\frac{(q-2)^2}{8q}
 \left[\left(\frac{4\sqrt{\La+i(i+d-2)}}{(a_c-a)(q-2)}+1+2j\right)^2-1\right],
 \qquad i,j\ge0.
\ee
These are all the eigenvalues of \eqref{equ-eigen}.

\subsubsection{The first three eigenvalues and eigenfunctions}
Direct calculation gives
\[
 \mu_{0,0}=1,\qquad \mu_{0,1}=q-1,\qquad
 \mu_{0,2}=\frac{(q-1)(3q-4)}q,
\]
and
\[
 \mu_{1,0}=\frac2q+\frac{2(d-1)}{q\La}+\frac{q-2}{q}\sqrt{1+\frac{d-1}{\La}}.
\]
Condition \eqref{case-1} implies $\mu_{1,0}>\mu_{0,1}$. Moreover,
\[
 \mu_{i,j}<\mu_{i+1,j},\qquad \mu_{i,j}<\mu_{i,j+1}\qquad(i,j\ge0).
\]
The first two eigenvalues and their eigenspaces are therefore
\[
 \mu_1=1,\qquad \mu_2=q-1,\qquad
 E_{\mu_1}=\operatorname{span}\{V\},\qquad
 E_{\mu_2}=\operatorname{span}\{V'\},
\]
and
\[
 \mu_3=\min\{\mu_{0,2},\mu_{1,0}\}>\mu_2.
\]
Recall $s$ from \eqref{eq:mass-parameter}. Following \cite{Wei-Wu=2024}, define
\[
 a_c^*:=a_c\left(1-\sqrt{\frac{d-1}{2d}}\right),\qquad
 b_{\mathrm{FS}}^*(a):=a-a_c+\frac{d}{1+s}.
\]
\begin{lemma}\label{lem:third-eigenvalue}
Under the standing assumptions, let $\mu_3$ be the third eigenvalue.
Then
\[\mu_3=\begin{cases} 
\displaystyle\frac{(q-1)(3q-4)}q, & a_c^*\le a<a_c, \\[2mm]
\displaystyle\frac{(q-1)(3q-4)}q, & a<a_c^*,\quad b\ge b_{\mathrm{FS}}^*(a), \\[2mm] 
\displaystyle \frac2q+\frac{2(d-1)}{q\Lambda}+\frac{q-2}{q}\sqrt{1+\frac{d-1}{\Lambda}}, & a<a_c^*,\quad b<b_{\mathrm{FS}}^*(a).
\end{cases}\]
Each case is understood subject to the admissibility
conditions above. In particular, the first case is
empty when $d=2$.
\end{lemma}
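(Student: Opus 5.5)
The plan is to reduce the comparison of $\mu_{0,2}$ and $\mu_{1,0}$ to an explicit algebraic inequality between $s$ and $q$, and then translate that inequality into the stated conditions on $a$ and $b$. By the monotonicity $\mu_{i,j}<\mu_{i+1,j}$ and $\mu_{i,j}<\mu_{i,j+1}$ recorded above, together with $\mu_1=1$ and $\mu_2=q-1$, we already know that $\mu_3=\min\{\mu_{0,2},\mu_{1,0}\}$. So the lemma only asks which of the two is smaller in each parameter regime.

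\textbf{Step 1: rewrite $\mu_{1,0}$ in terms of $s$.} Since $(d-1)/\La=s^2-1$, the formula for $\mu_{1,0}$ becomes
\[
 \mu_{1,0}=\frac{2+2(s^2-1)+(q-2)s}{q}=\frac{s(2s+q-2)}{q}.
\]
Multiplying by $q$, the comparison with $\mu_{0,2}=(q-1)(3q-4)/q$ reduces to the sign of the quadratic
\[
 P(s):=2s^2+(q-2)s-(3q^2-7q+4).
\]
Its discriminant is $(q-2)^2+8(3q^2-7q+4)=(5q-6)^2$, so it factors as
\[
 P(s)=2\bigl(s-(q-1)\bigr)\Bigl(s+\tfrac{3q-4}{2}\Bigr).
\]
The second factor is positive, hence
\[
 \mu_{1,0}\ge\mu_{0,2}\iff s\ge q-1.
\]

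\textbf{Step 2: translate $s\ge q-1$ into a condition on $b$.} Note that $s$ depends only on $a$ and $d$, while $q=2d/(d-2(1+a-b))$ is strictly decreasing in $b$. Solve $q=1+s$ using $d-2=2a_c$:
\[
 2(a_c-a)+2b=\frac{2d}{1+s}\iff b=a-a_c+\frac d{1+s}=b_{\mathrm{FS}}^*(a).
\]
By monotonicity, $s\ge q-1$ holds exactly when $b\ge b_{\mathrm{FS}}^*(a)$. This gives $\mu_3=\mu_{0,2}$ for $b\ge b^*_{\mathrm{FS}}(a)$ and $\mu_3=\mu_{1,0}$ for $b<b^*_{\mathrm{FS}}(a)$, which are the second and third cases of the lemma.

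\textbf{Step 3: the first case $a\ge a_c^*$.} Here I would show that $s\ge q-1$ holds automatically for every admissible $b$. Since $q\le 2^*$, it suffices to prove $s\ge 2^*-1=(d+2)/(d-2)$. Put $x:=a_c-a>0$. Then
\[
 s^2=1+\frac{d-1}{x^2}\ge\frac{(d+2)^2}{(d-2)^2}
 \iff x^2\le\frac{(d-1)(d-2)^2}{8d}
 \iff x\le a_c\sqrt{\frac{d-1}{2d}}
 \iff a\ge a_c^*.
\]
Thus on $a_c^*\le a<a_c$ we get $\mu_3=\mu_{0,2}$ irrespective of $b$, and for $a<a_c^*$ the dichotomy of Step 2 applies. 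When $d=2$ we have $a_c=a_c^*=0$, and admissibility forces $a<a_c=0$, so the first case is empty.

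I do not expect a real obstacle here. The only delicate points are keeping the direction of the monotonicity of $q$ in $b$ straight, and checking that the factorization of $P$ is correct. One should also note that all these manipulations stay inside the admissible region \eqref{ourcase-1}, where $2<q\le2^*$ and $\La>0$.
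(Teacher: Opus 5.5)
Your proposal is correct and follows the paper's proof essentially step for step. It uses the same factorization $q(\mu_{1,0}-\mu_{0,2})=(s-q+1)(2s+3q-4)$, the same translation of $q\le 1+s$ into $b\ge b_{\mathrm{FS}}^*(a)$, and the same observation for $d=2$. Your Step~3 criterion $s\ge 2^*-1$ is the paper's condition $b_{\mathrm{FS}}^*(a)\le a$ rewritten through the monotone correspondence between $q$ and $b$, since $b\ge a$ is equivalent to $q\le 2^*$.
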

\begin{proof}
A direct calculation yields
\[\begin{aligned}
    \mu_{1,0}-\mu_{0,2}
    &=\frac{2s^2+(q-2)s-(q-1)(3q-4)}q\\
    &=\frac{(s-q+1)(2s+3q-4)}q.
\end{aligned}\]
Since $2s+3q-4>0$, it follows that
    \[\mu_{0,2}\le\mu_{1,0}\quad\Longleftrightarrow\quad q\le1+s.\]
Using
    \[q=\frac{d}{a_c-a+b},\]
we obtain
    \[q\le1+s\quad\Longleftrightarrow\quad b\ge a-a_c+\frac{d}{1+s}=b_{\mathrm{FS}}^*(a).\]
For $d\ge3$, we further have
\[\begin{aligned}
    b_{\mathrm{FS}}^*(a)\le a \quad\Longleftrightarrow\quad  a\ge a_c\left(1-\sqrt{\frac{d-1}{2d}}\right)=a_c^*.
\end{aligned}\]
Consequently, if $a\ge a_c^*$, every admissible $b\ge a$ satisfies $b\ge b_{\mathrm{FS}}^*(a)$, and hence $\mu_3=\mu_{0,2}$. If $a<a_c^*$, the two branches are distinguished by whether $b\ge b_{\mathrm{FS}}^*(a)$ or $b<b_{\mathrm{FS}}^*(a)$.

Finally, when $d=2$, we have $a_c=a_c^*=0$ and $a<0$,
so the first case is empty. Moreover,
    \[a<b_{\mathrm{FS}}^*(a)=a+\frac2{1+s}<a+1.\]
The same comparison therefore gives the remaining two cases and completes the proof.
\end{proof}

\begin{lemma}\label{lem:bound}
Fix $a,b$ with $a<b<a+1$. There exist constants $A_{a,b}>0$ and $B_{a,b}>0$, independent of $d$, such that, for every integer $d\ge6$ satisfying \eqref{case-1},
    \[\frac{\mu_3-(q-1)}{q-2}-\mu_3x\ge A_{a,b}-B_{a,b}x\qquad\text{for all }0<x<1.\]
This estimate holds in all three cases of Lemma \ref{lem:third-eigenvalue}. In particular, it holds for $0<x<A_{a,b}/B_{a,b}$.

One may take $B_{a,b}=A_{a,b}+2$. If $a\ge0$, set
    \[A_{a,b}:=\min\left\{1,\frac{b-a}{1+a-b}\right\}.\]
If $a<0$, let
    \[D:=\left\lceil\max\left\{6,\frac{24|a|}{b-a}\right\}\right\rceil\]
and define the finite set
    \[\mathcal J(a,b):=\left\{k\in\mathbb N:6\le k<D,\quad \frac{q_k}{2}<s_k<q_k-1\right\},\]
where $q_k$ and $s_k$ denote the previously defined $q$ and $s$ evaluated in dimension $k$. Set
\[A_{a,b}:=\min\left(\left\{1,\frac{b-a}{2(1+a-b)}\right\}\cup\left\{\frac{(2s_k-q_k)(s_k+q_k-1)}{q_k(q_k-2)}:\ k\in\mathcal J(a,b)\right\}\right).\]
\end{lemma}
\begin{proof}
Set
    \[\beta:=\frac{\mu_3-(q-1)}{q-2}.\]
We prove $\beta\ge A_{a,b}$ by treating the three cases in Lemma \ref{lem:third-eigenvalue}.

\noindent\textit{First case: $a_c^*\le a<a_c$.}
Here $\mu_3=(q-1)(3q-4)/q$, so
    \[\beta=\frac{2(q-1)}q\ge1\ge A_{a,b}.\]

\noindent\textit{Second case: $a<a_c^*$ and $b\ge b_{\mathrm{FS}}^*(a)$.}
The third eigenvalue has the same expression as in the first case, and hence the same lower bound holds.

\noindent\textit{Third case: $a<a_c^*$ and $b<b_{\mathrm{FS}}^*(a)$.}
In this case, $q/2<s<q-1$ and $\mu_3=\mu_{1,0}$. Therefore,
    \[\beta=\frac{(2s-q)(s+q-1)}{q(q-2)}>0.\]
For comparison, let $s_0:=d/(d-2)$. A direct calculation gives
    \[\frac{(2s_0-q)(s_0+q-1)}{q(q-2)}
    =\frac{b-a}{1+a-b}\frac{d^2-d-2(1+a-b)}{(d-2)^2}
    \ge\frac{b-a}{1+a-b},\]
since $3d-4-2(1+a-b)>0$.
The function $t\mapsto(2t-q)(t+q-1)$ is increasing for $t>1$. If $a\ge0$, then $s\ge s_0$, and thus
    \[\beta\ge\frac{b-a}{1+a-b}\ge A_{a,b}.\]

Suppose now that $a<0$. For $\tau\le0$,
\[\begin{aligned}
0<\frac{\rd}{\rd\tau}\sqrt{1+\frac{4(d-1)}{(d-2-2\tau)^2}}
&=\frac{8(d-1)}{(d-2-2\tau)^2\sqrt{(d-2-2\tau)^2+4(d-1)}}\\
&\le\frac{8(d-1)}{d(d-2)^2}.
\end{aligned}\]
Integrating over $[a,0]$ yields
    \[0<s_0-s\le\frac{8|a|(d-1)}{d(d-2)^2}.\]
Moreover,
\[
 1<s<s_0\le\frac32,\qquad
 \frac{2(s+s_0)+q-2}{q}\le3.
\]
Using
    \[\frac1{q-2}=\frac{d-2(1+a-b)}{4(1+a-b)}\le\frac{d}{4(1+a-b)},\]
we obtain
\[\begin{aligned}
\beta
&=\frac{(2s_0-q)(s_0+q-1)}{q(q-2)}
  -\frac{(s_0-s)[2(s+s_0)+q-2]}{q(q-2)}\\
&\ge\frac{b-a}{1+a-b}-\frac{3(s_0-s)}{q-2}\\
&\ge\frac{b-a}{1+a-b}-\frac{6|a|(d-1)}{(1+a-b)(d-2)^2}\\
&\ge\frac{b-a}{1+a-b}-\frac{12|a|}{(1+a-b)d},
\end{aligned}\]
where we used $d(d-1)/(d-2)^2\le2$ for $d\ge6$.
Therefore, if $d\ge D$, then
    \[\beta\ge\frac{b-a}{2(1+a-b)}\ge A_{a,b}.\]
If $6\le d<D$, then $d\in\mathcal J(a,b)$, and the finite minimum defining $A_{a,b}$ gives the same bound.
Every term in that minimum is strictly positive, and the set of dimensions involved depends only on $a,b$. Thus $A_{a,b}>0$ is independent of $d$.

We have proved $\beta\ge A_{a,b}$ in all three cases. Finally, $d\ge6$ and $0<1+a-b<1$ imply $2<q<3$. Hence $1-(q-2)x>0$ for $0<x<1$, and
\[\begin{aligned}
\frac{\mu_3-(q-1)}{q-2}-\mu_3x
&=\beta\bigl[1-(q-2)x\bigr]-(q-1)x\\
&\ge A_{a,b}\bigl[1-(q-2)x\bigr]-(q-1)x\\
&\ge A_{a,b}-(A_{a,b}+2)x.
\end{aligned}\]
Since $0<A_{a,b}/B_{a,b}<1$, this also proves the assertion on $0<x<A_{a,b}/B_{a,b}$.
\end{proof}

\begin{remark}\label{rem:diagonal-parameters}
The strict inequality $b>a$ cannot in general be replaced by $b\ge a$ if a positive constant $A_{a,b}$ independent of $d$ is required.
Indeed, fix $a=b>0$.
As $d\to\infty$,
\[q=2+\frac4d+\frac8{d^2}+O(d^{-3}),\qquad s=1+\frac2d+\frac{8a+4}{d^2}+O(d^{-3}).\]
Consequently,
\[2s-q=\frac{16a}{d^2}+O(d^{-3})>0,\qquad q-1-s=\frac2d+O(d^{-2})>0\]
for all sufficiently large $d$. Thus these parameters are in the non-degenerate case and lie on the third spectral branch for all sufficiently large dimensions.
However,
\[\frac{\mu_3-(q-1)}{q-2}=\frac{(2s-q)(s+q-1)}{q(q-2)}=\frac{4a}{d}+O(d^{-2})\longrightarrow0.\]
If positive constants $A_{a,a}$ and $B_{a,a}$, independent of $d$, satisfied
\[\frac{\mu_3-(q-1)}{q-2}-\mu_3x\ge A_{a,a}-B_{a,a}x\qquad\text{for all }0<x<A_{a,a}/B_{a,a},\]
then letting $x\downarrow0$ would bound $A_{a,a}$ by the normalized gap above for every sufficiently large $d$. Passing to the limit $d\to\infty$ would imply $A_{a,a}\le0$, a contradiction.
\end{remark}

\subsubsection{Fourth moments of eigenfunctions below a spectral cutoff}
Define
\[
 \eta_i:=\frac{2}{q-2}\sqrt{1+\frac{i(i+d-2)}{\La}},\qquad
 F_{j,\eta}(z):=\leftidx{_2}F_1\sbr{-j,2\eta+j+1;\eta+1;z}.
\]
The hypergeometric equation and derivative identity give
\be\label{10-23-3} z(1-z)F_{j,\eta_i}''(z)+(\eta_i+1)(1-2z)F_{j,\eta_i}'(z)+j(2\eta_i+j+1)F_{j,\eta_i}(z)=0,\ee
and
\be\label{10-23-4} (\eta_i+1)F_{j,\eta_i}'(z)+j(2\eta_i+j+1)F_{j-1,\eta_i+1}(z)=0.\ee
For the rest of this subsection, fix $a,b$ with $a<b<a+1$ and let $d\ge2$ satisfy \eqref{case-1}. We prove the following estimate.
To specify a constant independent of $d$, set
\[d_*:=\max\{2,\lfloor2a+2\rfloor+1\},\qquad
\rho_a:=\max\left\{1,\frac{d_*}{d_*-2-2a}\right\},\]
and, for each integer $K\ge1$, let
\be\label{eq:angular-cutoff-size}
M_K:=\left\lceil\frac{K(K+2)\rho_a^2}{2(1+a-b)}\right\rceil.
\ee
\begin{lemma}\label{lemma:large eigenfunction}
For every integer $K\ge1$, every eigenvalue $\mu\le\mu_{K,0}$, and every $v\in E_\mu$, one has
\be \label{eq:target}
 \left(\int_{\CR}V^{q-4}v^4\rd\nu\right) \left(\int_{\CR}V^q\rd\nu\right) \le D_{K,a,b}\left(\int_{\CR}V^{q-2}v^2\rd\nu\right)^2,
\ee
where one may take
\be\label{eq:DK}
D_{K,a,b}=16^{M_K+1}\left(1-\left(\frac34\right)^{K+1}\right)^2
\frac{(2M_K+1)!}{4^{M_K}(M_K!)^2}.
\ee
For fixed $a,b$, this constant is independent of $d$.
\end{lemma}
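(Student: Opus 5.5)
Write $w:=v/V$. Then \eqref{eq:target} reads $\int_\CR w^4\,\rd m\le D_{K,a,b}\bigl(\int_\CR w^2\,\rd m\bigr)^2$, where $\rd m:=V^q\rd\nu/\int_\CR V^q\rd\nu$ is a probability measure on $\CR$. So the statement is a reverse H\"older ($L^4$ versus $L^2$) inequality for $w$ with respect to $m$, and the whole task is to make the constant independent of $d$.

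\emph{Structure of $v$.} By \eqref{mu-ij}, $\mu_{i,j}$ is strictly increasing in both $i$ and $j$. Hence $\mu\le\mu_{K,0}$ forces every pair $(i,j)$ with $\mu_{i,j}=\mu$ to satisfy $i\le K$. For each such $i$ there is at most one admissible $j=j(i)$. Therefore
\[
 v=\sum_{i=0}^K\vp_{j(i)}(\mu;t)\,Y_i(\om),
\]
where each $Y_i$ is a spherical harmonic of degree $i$.

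\emph{Change of variables.} Set $z=\tfrac12\bigl(1-\tanh(\tfrac{q-2}2\sqrt\La\,t)\bigr)$, so that $V^{q-2}\propto4z(1-z)$. The factor $w_i:=\vp_{j(i)}/V$ becomes $(4z(1-z))^{(\eta_i-\eta_0)/2}F_{j(i),\eta_i}(z)$, up to normalization. The measure $V^q\rd t$ becomes a symmetric Beta weight $(z(1-z))^{\eta_0-1}\rd z$, up to constants.

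The key dimensional observation concerns the radial exponents. Since $i\le K$, the definition of $\eta_i$ and $1/(q-2)\le d/(4(1+a-b))$ give $\eta_i-\eta_0\lesssim K(K+2)\rho_a^2/(1+a-b)$ uniformly in $d$. Similarly, the equality $\mu_{i,j(i)}=\mu\le\mu_{K,0}$ gives $2j(i)\le\eta_K-\eta_i$. Both the extra power and the polynomial degree are therefore controlled by $M_K$ from \eqref{eq:angular-cutoff-size}. This is where $\rho_a$ and $M_K$ enter.

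\emph{Radial reverse H\"older.} For a fixed pair $(i,j)$, I would prove a radial reverse H\"older bound:
\[
 \int w_i^4\,\rd m_{\mathrm{rad}}\le C_{M}\Bigl(\int w_i^2\,\rd m_{\mathrm{rad}}\Bigr)^2 .
\]
The plan is to expand $F_{j,\eta_i}$ with the terminating series \eqref{2-17} and to use the identities \eqref{10-23-3}--\eqref{10-23-4} to express the relevant moments as Beta integrals. Ratios of the form $B(\eta_0+2p,\eta_0+2p)/B(\eta_0,\eta_0)$ are monotone in $\eta_0$. One then bounds them by their limiting value as $\eta_0\to\infty$, where the measure concentrates and only central binomial type quantities survive. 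This should produce the factor $(2M_K+1)!/(4^{M_K}(M_K!)^2)$ together with a power of $16$ coming from $4z(1-z)\le1$ and the polynomial degree.

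\emph{Angular reverse H\"older.} For degree-$i$ harmonics I would use the dimension-free hypercontractive estimate $\|Y_i\|_{L^4(\S^{d-1})}\le3^{i/2}\|Y_i\|_{L^2(\S^{d-1})}$. This is the reverse H\"older estimate for spherical harmonics mentioned in the introduction.

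\emph{Assembly.} The terms with different $i$ are orthogonal in $L^2(m)$. The plan is:
\begin{enumerate}[label=(\roman*)]
 \item Apply Minkowski: $\|w\|_{L^4(m)}\le\sum_{i\le K}\|w_i Y_i\|_{L^4(m)}$.
 \item Use the product structure: $\|w_iY_i\|_{L^4(m)}=\|w_i\|_{L^4}\,\|Y_i\|_{L^4}$.
 \item Apply the two reverse H\"older bounds to each factor.
 \item Finish with Cauchy--Schwarz using geometric weights $(3/4)^{i}$ against orthogonality. This yields the factor $\bigl(\sum_{i\le K}(3/4)^i\bigr)^2\propto(1-(3/4)^{K+1})^2$ times $16^{M_K+1}$, matching \eqref{eq:DK}.
\end{enumerate}

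\emph{Main obstacle.} I expect the hardest step to be the uniform radial bound. The $L^4$ norm of a hypergeometric polynomial $F_{j,\eta}$ against a Beta weight must be controlled by its $L^2$ norm with a constant depending only on $j$ and on the bounded exponent shift $\eta_i-\eta_0$, and not on $\eta_0\sim d$. My first attempt would be a crude coefficient bound. The shifted Jacobi polynomial $F_{j,\eta}$ satisfies $|F_{j,\eta}|\le \binom{2j+\ldots}{j}$-type bounds on $[0,1]$, while its $L^2$ norm is bounded below by the mass near $z=\tfrac12$, where it is close to $1$ for large $\eta$. Finite dimensions would be handled by the monotonicity of the Beta ratios in $\eta_0$.
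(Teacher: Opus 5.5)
\textbf{There is a genuine gap: the uniform radial estimate.} Your reduction to an $L^4$--$L^2$ reverse H\"older inequality for $w=v/V$ under the probability measure proportional to $V^q\,\rd\nu$ coincides with the paper's argument. So do the grouping of each angular degree into one harmonic $Y_i$, the dimension-free bound $\|Y_i\|_{L^4}\le3^{i/2}\|Y_i\|_{L^2}$, and the Minkowski/Cauchy--Schwarz assembly. The radial estimate, however, carries the whole difficulty, and you leave it as a plan. In the variable $z$ it asks for a bound on
\[
 \mathcal R_{j,\eta}=\frac{\int_0^1(z(1-z))^{2\eta-\eta_0}F_{j,\eta}^4\,\rd z\int_0^1(z(1-z))^{\eta_0}\,\rd z}{\bigl(\int_0^1(z(1-z))^{\eta}F_{j,\eta}^2\,\rd z\bigr)^2},
\]
uniform in $\eta\ge\eta_0:=2/(q-2)$ and $j\ge0$ with $\eta+j\le\eta_0+M$. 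The mechanisms you propose do not give a bound uniform in $\eta\asymp d$.

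\emph{Why your mechanisms fail.} For $j\ge1$, $F_{j,\eta}$ is a Gegenbauer polynomial in $1-2z$ normalized by $F_{j,\eta}(0)=1$. As $\eta\to\infty$ it tends to $(1-2z)^j$, so it is not close to $1$ near $z=\tfrac12$. Its weighted $L^2$ norm, normalized by $\int_0^1(z(1-z))^\eta\,\rd z$, is of order $\eta^{-j/2}$. The sup bound $|F_{j,\eta}|\le1$, combined with any lower bound for the denominator, therefore loses a factor of order $\eta^{2j}$. Termwise estimates of the series \eqref{2-17} fail for the same reason. Its coefficients tend to $(-2)^k\binom jk$, so near $z=\tfrac12$ the sum of the absolute values of the terms exceeds the true size by a factor of order $\eta^{j/2}$. (An expansion could only be rescued in powers of $1-2z$, where all terms of the Gegenbauer form have the same order on the concentration window, together with the exact Jacobi norm in the denominator. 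That route gives some $d$-independent constant, but not obviously the one in \eqref{eq:DK}.) Also, $b_M=(2M+1)!/(4^M(M!)^2)$ equals $J(0)/J(M)$ with $J(t):=\int_0^1[4z(1-z)]^t\,\rd z$. This is the value at $\eta_0=0$ of the decreasing ratio $J(\eta_0)/J(\eta_0+M)$, which tends to $1$ as $\eta_0\to\infty$. So $b_M$ cannot come from a large-$\eta_0$ limit, contrary to your sketch.

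\textbf{The missing idea is a degree-lowering recursion that keeps $\eta+j$ fixed.} Write $F=F_{j,\eta}$, $L=j(2\eta+j+1)$, and put the hypergeometric equation in self-adjoint form: $\bigl((z(1-z))^{\eta+1}F'\bigr)'+L(z(1-z))^\eta F=0$. Multiply by $(z(1-z))^{\eta-\eta_0}F^3$ and integrate by parts. A second integration by parts bounds the cross term by $\tfrac{\eta-\eta_0}{2}\int_0^1(z(1-z))^{2\eta-\eta_0}F^4\,\rd z$, and Cauchy--Schwarz then gives
\[
 \int_0^1(z(1-z))^{2\eta-\eta_0}F^4\,\rd z\le\frac{9}{(L-\tfrac{\eta-\eta_0}{2})^2}\int_0^1(z(1-z))^{2\eta-\eta_0+2}(F')^4\,\rd z .
\]
The self-adjoint form also gives $\int_0^1(z(1-z))^{\eta+1}(F')^2\,\rd z=L\int_0^1(z(1-z))^{\eta}F^2\,\rd z$. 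By \eqref{10-23-4}, $F'$ is a constant multiple of $F_{j-1,\eta+1}$. All constants then cancel in the ratio, and since $(\eta-\eta_0)/(2L)<\tfrac14$ one gets $\mathcal R_{j,\eta}\le16\,\mathcal R_{j-1,\eta+1}$, independently of $\eta$. After $j$ steps, $\mathcal R_{0,\eta+j}\le J(\eta_0)/J(\eta+j)\le J(\eta_0)/J(\eta_0+M)\le b_M$. This per-mode radial constant $16^{j_i}b_{M_K}$ is what the assembly turns into \eqref{eq:DK}.

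\textbf{Three smaller corrections.}
\begin{itemize}
 \item By \eqref{mu-ij}, $\mu_{i,j}$ is proportional to $(\eta_i+j)(\eta_i+j+1)$. Hence $j(i)\le\eta_K-\eta_i$, not $2j(i)\le\eta_K-\eta_i$.
 \item The geometric weights $(3/4)^i$ require $j_i\le M_K-i$. This uses the lower bound $\eta_i>\eta_0+i$ for $i\ge1$ from \eqref{eq:eta-lower}, not only the upper bound on $\eta_K-\eta_0$.
 \item $V^q\,\rd t$ becomes a multiple of $(z(1-z))^{\eta_0}\,\rd z$, not $(z(1-z))^{\eta_0-1}\,\rd z$.
\end{itemize}
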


We first establish the bounds on the spectral indices needed in the proof.
Since the function $x\mapsto x(x+1)$ is strictly increasing on $(0,\infty)$, formula \eqref{mu-ij} gives
\be \label{eq:radialcut} \mu_{i,j}\le\mu_{K,0} \quad\Longleftrightarrow\quad \eta_i+j\le \eta_K. \ee
Since $\eta_i$ is strictly increasing in $i$, this implies $i\le K$.
Also, $d\ge d_*$ and $d/(d-2-2a)\le\rho_a$. Rationalizing the square root and using $d\ge2$ gives
\[\begin{aligned}
\eta_K-\eta_0
&\le\frac{[d-2(1+a-b)]K(K+d-2)}{(1+a-b)(d-2-2a)^2}\\
&\le\frac{K(K+2)\rho_a^2}{2(1+a-b)}\le M_K.
\end{aligned}\]
Moreover, \eqref{case-1} implies 
    \[\f{4}{q-2}\ge d-2,\quad \f{4}{(q-2)^2\Lambda}>\frac{\f{4}{q-2}+1}{d-1}\ge1,\]
and direct calculation gives
\be\label{eq:eta-lower}\begin{aligned}
 \eta_i^2-\left(\frac{2}{q-2}+i\right)^2
 &=i\left[(i-1)\left(\frac4{(q-2)^2\La}-1\right)\right.\\
 &\hspace{15mm}\left.+\frac{4(d-1)}{(q-2)^2\La}
 -\left(\frac4{q-2}+1\right)\right]>0,
\end{aligned}\ee 
for every $i\ge1$.
Thus $\eta_i>\f{2}{q-2}+i$ for $i\ge1$. Combining this with \eqref{eq:radialcut}, we obtain
\begin{equation}\label{eq:jointcut}
\mu_{i,j}\le\mu_{K,0} \quad\Longrightarrow\quad
 \begin{cases}
   0\le i\le K,\\
   i+j\le M_K.
 \end{cases}
\end{equation}
Consequently, there are at most
\be\label{eq:cutoff-count} N_{K,a,b}:=(K+1)(M_K+1) \ee
distinct eigenvalues not exceeding $\mu_{K,0}$. This bound counts the admissible pairs $(i,j)$, without counting the multiplicity of each spherical harmonic space.

\begin{lemma}\label{lem:radial}
Let $M\ge1$ be an integer, and suppose
\[
 \eta\ge\frac{2}{q-2},\qquad j\in\mathbb Z_{\ge0},\qquad \eta+j\le \frac{2}{q-2}+M.
\]
Define 
\[
 b_M:=\prod_{r=1}^{M}\frac{r+\tfrac12}{r}
      =\frac{\Gamma(M+\tfrac32)}{\Gamma(\tfrac32)\Gamma(M+1)}
      =\frac{(2M+1)!}{4^M(M!)^2}.
\]
Then
\begin{equation}\label{eq:radialratio}
 \mathcal R_{j,\eta}:=
 \frac{\displaystyle
     \int_0^1(z(1-z))^{2\eta-\f{2}{q-2}}F_{j,\eta}^4\rd z
     \int_0^1(z(1-z))^\f{2}{q-2}\rd z}
      {\displaystyle\left(\int_0^1(z(1-z))^{\eta} F_{j,\eta}^2\rd z\right)^2}
 \le 16^j b_M.
\end{equation}
\end{lemma}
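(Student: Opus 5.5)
The plan is to reduce everything to Beta integrals by means of a uniform bound on $F_{j,\eta}$ and an explicit formula for its weighted $L^2$ norm. Write $p:=\frac2{q-2}$, $w(z):=z(1-z)$, and $B(x):=\int_0^1 w^x\rd z=\Gamma(x+1)^2/\Gamma(2x+2)$.

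\textbf{Step 1: the polynomial as a normalized Gegenbauer polynomial.} Under $x=1-2z$, the polynomial $F_{j,\eta}(z)={}_2F_1(-j,2\eta+j+1;\eta+1;z)$ is the Gegenbauer polynomial $C^{(\eta+1/2)}_j(x)$ normalized so that its value at $x=1$ is $1$. This identification follows from \eqref{10-23-3}, or from the standard Jacobi representation with $\alpha=\beta=\eta$. Since $\eta+\tfrac12>0$, the maximum of $|C_j^{(\lambda)}|$ on $[-1,1]$ is attained at the endpoints, so
\[
\sup_{[0,1]}|F_{j,\eta}|\le1 .
\]
Using $F^4\le F^2\cdot\sup F^2\le1$ and $2\eta-p\ge\eta\ge p$, the numerator of $\mathcal R_{j,\eta}$ is at most $B(2\eta-p)\,B(p)$.

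\textbf{Step 2: lower bound for the denominator.} The classical Jacobi norm formula gives
\[
\int_0^1 w^\eta F_{j,\eta}^2\rd z=B(\eta)\,c_j(\eta),
\]
where $c_j(\eta)$ is an explicit ratio of Pochhammer symbols. I expect it to be $c_j(\eta)=\frac{j!\,(2\eta+1+j)\,(\eta+1)_j}{(2\eta+1+2j)\,(2\eta+1)_j\,(\eta+1)_j}\cdot\frac{(\eta+1)_j}{1}$ up to the exact bookkeeping. I will check it against $j=0$, where $c_0=1$, and $j=1$, where $F_{1,\eta}=1-2z$ gives $c_1=\frac{\eta+1}{2\eta+3}\ge\frac13$. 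From the product form, each factor of $c_j(\eta)$ should be at least $\frac14$ uniformly in $\eta\ge0$, which gives $c_j(\eta)\ge4^{-j}$. Squaring produces the factor $16^j$. Verifying this factorwise lower bound is the main bookkeeping obstacle. If the clean bound $4^{-j}$ fails, I would try the weaker but still sufficient route of bounding $\int w^\eta F^2$ from below on a subinterval near the endpoints, where $F$ stays close to $1$ thanks to the derivative identity \eqref{10-23-4}.

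\textbf{Step 3: the Beta ratio.} By the duplication formula,
\[
B(x)=\frac{\sqrt\pi}{2\cdot4^{x}}\,g(x),\qquad g(x):=\frac{\Gamma(x+1)}{\Gamma(x+3/2)} .
\]
Since the exponents satisfy $(2\eta-p)+p=2\eta$, the powers of $4$ cancel and
\[
\frac{B(2\eta-p)B(p)}{B(\eta)^2}=\frac{g(2\eta-p)\,g(p)}{g(\eta)^2}\le\frac{g(p)}{g(\eta)},
\]
where the inequality uses that $g$ is decreasing. Set $m:=\eta-p\in[0,M]$, which holds because $\eta+j\le p+M$. If $m$ is an integer, then
\[
\frac{g(p)}{g(p+m)}=\prod_{r=1}^{m}\frac{p+r+\tfrac12}{p+r}\le\prod_{r=1}^{M}\frac{r+\tfrac12}{r}=b_M .
\]
For non-integer $m$, the map $m\mapsto g(p)/g(p+m)$ is increasing, so I bound it by its value at $\lceil m\rceil\le M$. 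Combining this with Step 2 gives $\mathcal R_{j,\eta}\le 16^j b_M$.

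I expect Step 2, the exact normalization constant and the uniform lower bound $c_j\ge4^{-j}$, to be the only delicate point. Steps 1 and 3 are standard facts about Gegenbauer polynomials and the Gamma function.
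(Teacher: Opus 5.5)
Your Beta-ratio computation in Step~3 is correct; it is the paper's $j=0$ case, since the paper's $J(t)$ equals $4^tB(t)$. The gap is in Steps~1--2, and it is more than bookkeeping: the bound $c_j(\eta)\ge4^{-j}$ is false for every $j\ge1$ once $\eta$ is moderately large.

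Already for $j=1$ you have $F_{1,\eta}=1-2z$ and $(1-2z)^2=1-4z(1-z)$, hence
\[
\int_0^1(z(1-z))^\eta F_{1,\eta}^2\,\rd z=B(\eta)-4B(\eta+1)=\frac{B(\eta)}{2\eta+3}.
\]
So $c_1(\eta)=1/(2\eta+3)$, not $(\eta+1)/(2\eta+3)$; at $\eta=1$ one gets $\tfrac16-\tfrac4{30}=\tfrac1{30}=\tfrac15\cdot\tfrac16$. In general the Gegenbauer norm gives
\[
c_j(\eta)=\frac{(\eta+\tfrac12)\,j!}{(\eta+j+\tfrac12)\,(2\eta+1)_j}\sim j!\,(2\eta)^{-j}.
\]
The lemma must hold for all $\eta\ge p=2/(q-2)$, with $p$ arbitrarily large; this dimension-uniformity is exactly what it is used for. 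Your route only yields $\mathcal R_{j,\eta}\le b_M\,c_j(\eta)^{-2}$, which grows like $\eta^{2j}$.

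The underlying reason is that $|F_{j,\eta}|=1$ only at the endpoints, where $(z(1-z))^{2\eta-p}$ is negligible. In the region $|z-\tfrac12|\lesssim\eta^{-1/2}$ that carries the weight, $F_{j,\eta}$ has size $\eta^{-j/2}$, by the Hermite asymptotics of Gegenbauer polynomials. Replacing $F^4$ by $1$ therefore discards a factor $\eta^{2j}$ that no lower bound on the denominator can recover. Your fallback fails for the same reason. On any interval $[0,z_0]$ with $z_0<\tfrac12$, the weight $(z(1-z))^\eta$ carries only a fraction of order $(4z_0(1-z_0))^\eta$ of $B(\eta)$, which is exponentially small.

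The missing idea is to estimate numerator and denominator together, so that the size of $F$ cancels. The paper does this with \eqref{10-23-3}--\eqref{10-23-4}. Multiplying the hypergeometric equation by $(z(1-z))^{2\eta-p}F^3$ and integrating by parts twice gives
\[
\int_0^1(z(1-z))^{2\eta-p}F^4\,\rd z\le\frac{9}{(L-\frac{\eta-p}2)^2}\int_0^1(z(1-z))^{2\eta-p+2}(F')^4\,\rd z,\qquad L=j(2\eta+j+1).
\]
The self-adjoint form gives $\int_0^1(z(1-z))^{\eta+1}(F')^2\,\rd z=L\int_0^1(z(1-z))^\eta F^2\,\rd z$. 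Since $F'$ is a constant multiple of $F_{j-1,\eta+1}$ and $\mathcal R$ is invariant under scaling of $F$, this yields $\mathcal R_{j,\eta}\le16\,\mathcal R_{j-1,\eta+1}$ with $\eta+j$ preserved. Iterating $j$ times reduces everything to the $j=0$ estimate of your Step~3.
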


\begin{proof}
The parameter range includes the radial degree $i=0$ and the parameters obtained by differentiation. First let $j=0$, so that $F_{0,\eta}\equiv1$, and put
\[
 J(t):=\int_0^1[4z(1-z)]^t\rd z
      =\frac{\sqrt\pi\,\Gamma(t+1)}{2\Gamma(t+\tfrac32)},\qquad t\ge0.
\]
The powers of $4$ cancel in the quotient, so
\[
 \mathcal R_{0,\eta}
 =\frac{J(2\eta-\f{2}{q-2})J(\f{2}{q-2})}{J(\eta)^2}.
\]
Since $0\le4z(1-z)\le1$, the function $J$ is decreasing. Consequently,
\begin{align}
 \mathcal R_{0,\eta}
 &\le\frac{J(\f{2}{q-2})}{J(\eta)}
 \le\frac{J(\f{2}{q-2})}{J(\f{2}{q-2}+M)}\notag\\
 &=\prod_{r=1}^{M}\frac{\f{2}{q-2}+r+\tfrac12}{\f{2}{q-2}+r}
 \le\prod_{r=1}^{M}\frac{r+\tfrac12}{r}=b_M.\label{eq:base}
\end{align}

For $j\ge1$, abbreviate
\[
 F=F_{j,\eta},\qquad
 L=j(2\eta+j+1)>0.
\]
The hypergeometric equation and derivative identity are
\begin{equation}\label{eq:hypid}
 z(1-z)F''+(\eta+1)(1-2z)F'+LF=0,
 \qquad
 F'=-\frac{L}{\eta+1}F_{j-1,\eta+1}.
\end{equation}
Multiply the differential equation by $(z(1-z))^{2\eta-\f{2}{q-2}} F^3$ and integrate by parts. Since $F$ is a polynomial and $2\eta-\f{2}{q-2}>0$, all boundary terms vanish. We get
\begin{equation}\label{eq:IBP4}\begin{aligned}
 L\int_0^1(z(1-z))^{2\eta-\f{2}{q-2}} F^4\rd z
 &=3\int_0^1(z(1-z))^{2\eta-\f{2}{q-2}+1}F^2(F')^2\rd z\\
 &\quad+\left(\eta-\frac{2}{q-2}\right)
 \int_0^1(z(1-z))^{2\eta-\f{2}{q-2}}(1-2z)F^3F'\rd z.
\end{aligned}\end{equation}
A second integration by parts yields
\[\begin{aligned}
 \int_0^1(z(1-z))^{2\eta-\f{2}{q-2}} (1-2z)F^3F'\rd z
 &=-\frac{2\eta-\f{2}{q-2}}4\int_0^1(z(1-z))^{2\eta-\f{2}{q-2}-1}(1-2z)^2F^4\rd z\\
   &~\quad +\frac12\int_0^1(z(1-z))^{2\eta-\f{2}{q-2}} F^4\rd z\notag\\
 &\le\frac12\int_0^1(z(1-z))^{2\eta-\f{2}{q-2}} F^4\rd z.\label{eq:cross}
\end{aligned}\]
Here the integral with exponent $2\eta-\f{2}{q-2}-1$ is finite because $2\eta-\f{2}{q-2}>0$. Since $\eta-\f{2}{q-2}\ge0$, Cauchy--Schwarz applied in \eqref{eq:IBP4} gives
\begin{align*}
 \left(L-\frac{\eta-\f{2}{q-2}}{2}\right)\int_0^1(z(1-z))^{2\eta-\f{2}{q-2}} F^4\rd z
 \le & 3\left(\int_0^1(z(1-z))^{2\eta-\f{2}{q-2}} F^4\rd z\right)^{1/2}\\
       &~\quad \times\left(\int_0^1(z(1-z))^{2\eta-\f{2}{q-2}+2}(F')^4\rd z\right)^{1/2}.
\end{align*}
Thus
\begin{equation}\label{eq:fourth-step}
 \int_0^1(z(1-z))^{2\eta-\f{2}{q-2}} F^4\rd z
 \le\frac9{(L-\tfrac{\eta-\f{2}{q-2}}{2})^2}
       \int_0^1(z(1-z))^{2\eta-\f{2}{q-2}+2}(F')^4\rd z.
\end{equation}
The self-adjoint form
\[
 \bigl((z(1-z))^{\eta+1}F'\bigr)'+L(z(1-z))^\eta F=0
\]
similarly gives
\begin{equation}\label{eq:second-step}
 \int_0^1(z(1-z))^{\eta+1}(F')^2\rd z
   =L\int_0^1(z(1-z))^\eta F^2\rd z.
\end{equation}
Combining \eqref{eq:hypid}, \eqref{eq:fourth-step}, and \eqref{eq:second-step}, with all constant multiples of $F'$ cancelling, we find
\begin{equation}\label{eq:recurrence}
 \mathcal R_{j,\eta}
 \le\frac9{\left(1-\dfrac{\eta-\f{2}{q-2}}{2j(2\eta+j+1)}\right)^2}
             \mathcal R_{j-1,\eta+1}
 \le16\,\mathcal R_{j-1,\eta+1}.
\end{equation}
Indeed,
\[
 0\le\frac{\eta-\f{2}{q-2}}{2j(2\eta+j+1)}<\frac14.
\]
Each differentiation preserves $\eta+j$. Iterating \eqref{eq:recurrence} and using \eqref{eq:base} with $\eta+j\le\f{2}{q-2}+M$ proves
\[
 \mathcal R_{j,\eta}\le16^j\mathcal R_{0,\eta+j}\le16^jb_M.
\]
\end{proof}

\begin{proof}[Proof of Lemma \ref{lemma:large eigenfunction}]
Fix an eigenvalue $\mu\le\mu_{K,0}$.
For each angular degree $i$, strict monotonicity of $\mu_{i,j}$ in $j$ shows that at most one integer $j_i\ge0$ satisfies $\mu=\mu_{i,j_i}$. 
The one-dimensional radial eigenspace of \eqref{equ-eigen-R} then implies
\begin{equation}\label{eq:grouped}\begin{aligned}
 v(t,\omega)&=\sum_{i\in I_\mu}f_i(t)Y_i(\omega),
 \qquad Y_i=\sum_{l=1}^{l_{i,d}}c_{i,l}\Theta_{i,l},\quad\text{and} \\
 f_i(t)&=\left[1-\tanh^2\left(\frac{q-2}{2}\sqrt\La\,t\right)\right]^{\eta_i/2}
 F_{j_i,\eta_i}\left(\frac{1-\tanh\left(\frac{q-2}{2}\sqrt\La\,t\right)}2\right),
\end{aligned}\end{equation}
where $I_\mu\subset\{0,\ldots,K\}$. Each $Y_i$ is a spherical harmonic of degree $i$ and contains the full angular multiplicity for that degree.

Use the substitution
\[
\begin{aligned}
 &z=\frac{1-\tanh\left(\frac{q-2}{2}\sqrt\La\,t\right)}2,\\
 &1-\tanh^2\left(\frac{q-2}{2}\sqrt\La\,t\right)=4z(1-z),\\
 &|\rd t|=\frac{\rd z}{(q-2)\sqrt\La\,z(1-z)}.
\end{aligned}
\]
All dimensional and scaling factors cancel, and \eqref{eq:radialratio} becomes
\begin{equation}\label{eq:radial-t}
 \int_{\R}V^q\rd t\int_{\R}V^{q-4}f_{i}^4\rd t
 \le16^{j_i} b_{M_K}\left(\int_{\R}V^{q-2}f_{i}^2\rd t\right)^2,
 \qquad \mu_{i,j_i}=\mu\le\mu_{K,0}.
\end{equation}
Duoandikoetxea's reverse H\"older inequality \cite[Theorem 1]{Duo=1987} gives
\begin{equation}\label{eq:angular}
 \int_{\S^{d-1}}Y_i^4\rd\omega
 \le\frac{9^i}{|\S^{d-1}|}\left(\int_{\S^{d-1}}Y_i^2\rd\omega\right)^2.
\end{equation}
Define
\[
 E_i:=\left(\int_{\R}V^{q-2}f_i^2\rd t\right)
       \left(\int_{\S^{d-1}}Y_i^2\rd\omega\right).
\]
The radial estimate \eqref{eq:radial-t} and the angular estimate \eqref{eq:angular} give
\begin{equation}\label{eq:block}
 \left(\int_{\CR}V^{q-4}|f_iY_i|^4\rd\nu\right)^{1/4}
 \le\left(\frac{b_{M_K}}{|\S^{d-1}|\int_{\R}V^q\rd t}\right)^{1/4}
               2^{j_i}3^{i/2}E_i^{1/2}.
\end{equation}
Minkowski's inequality in $L^4(\CR,V^{q-4}\rd\nu)$, followed by Cauchy--Schwarz in the finite index set $I_\mu$, yields
\begin{align}
 \left(\int_{\CR}V^{q-4}v^4\rd\nu\right)^{1/4}
 &\le\left(\frac{b_{M_K}}{|\S^{d-1}|\int_{\R}V^q\rd t}\right)^{1/4}
       \sum_{i\in I_\mu}2^{j_i}3^{i/2}E_i^{1/2}\notag\\
 &\le\left(\frac{b_{M_K}}{|\S^{d-1}|\int_{\R}V^q\rd t}\right)^{1/4}
       \left(\sum_{i\in I_\mu}4^{j_i}3^i\right)^{1/2}
       \left(\sum_{i\in I_\mu}E_i\right)^{1/2}.\label{eq:combine}
\end{align}
Different spherical degrees are orthogonal, and $V$ depends only on $t$. Hence
\begin{equation}\label{eq:energy-sum}
 \sum_{i\in I_\mu}E_i=\int_{\CR}V^{q-2}v^2\rd\nu.
\end{equation}
Furthermore, \eqref{eq:jointcut} implies $j_i\le M_K-i$, so
\begin{equation}\label{eq:geom-sum}
 \sum_{i\in I_\mu}4^{j_i}3^i
 \le4^{M_K}\sum_{i=0}^{K}\left(\frac34\right)^i
 =4^{M_K+1}\left(1-\left(\frac34\right)^{K+1}\right).
\end{equation}
Taking the fourth power in \eqref{eq:combine}, applying \eqref{eq:energy-sum}--\eqref{eq:geom-sum}, and using
\[
 \int_\CR V^q\rd\nu=|\S^{d-1}|\int_\R V^q\rd t,
\]
we obtain
\[
 \left(\int_{\CR}V^{q-4}v^4\rd\nu\right)
 \left(\int_{\CR}V^q\rd\nu\right)
 \le b_{M_K}\left[4^{M_K+1}\left(1-\left(\frac34\right)^{K+1}\right)\right]^2
       \left(\int_{\CR}V^{q-2}v^2\rd\nu\right)^2.
\]
This is \eqref{eq:target} with \eqref{eq:DK}.
\end{proof}

\begin{remark}
For the cutoff $\mu_{K,0}$, the dependence of the constant on $a,b$ cannot in general be removed. Indeed, fix $d=6$ and $q=5/2$, so that $b-a=2/5$, and let $a\uparrow a_c=2$. Then $\Lambda\downarrow0$, $\eta_0=4$, and $\eta_K\to\infty$. The eigenfunction with angular degree $K$ and radial degree $0$ has radial fourth-moment ratio
\[\mathcal R_{0,\eta_K}
=\frac{J(2\eta_K-4)J(4)}{J(\eta_K)^2}
\sim\sqrt{\frac2\pi}J(4)\sqrt{\eta_K}\longrightarrow\infty.\]
Its angular factor is fixed and positive. Hence no bound depending only on $K$ is uniform over all admissible $a,b$ for this cutoff.
\end{remark}

\section{Functions close to the manifold of optimizers}\label{sec:local}
We use the normalized measure
\be \rd \nu_1=\f{1}{\nm{V}_q^q}\rd\nu,\ee 
where $V$ is given by \eqref{V}. 
Since $V$ satisfies \eqref{equ-C} and attains $S_{\La,q}$,
\be\label{10-26-1} \nm{V}_q^{q-2}=S_{\La,q}. \ee 
The corresponding energy and Lebesgue norms are
    \[\nm{u}_{\La,\nu_1}:=\sbr{\int_\CR|\pa_t u|^2+|\nabla_\om u|^2+\La u^2\rd\nu_1}^{\f12}=\f{\nm{u}_\La}{\nm{V}_q^{\f{q}{2}}},\]
and
    \[\nm{u}_{p,\nu_1}:=\sbr{\int_\CR|u|^p\rd\nu_1}^{\f1p}=\f{\nm{u}_p}{\nm{V}_q^{\f{q}{p}}}.\]
By \eqref{10-26-1} and \eqref{CKN-C-S}, the CKN inequality on $(\CR,\nu_1)$ becomes
\be\label{CKN-C-1} \nm{u}_{\La,\nu_1}^2\ge\nm{u}_{q,\nu_1}^2, \quad\forall~u\in H^1(\CR,\nu_1), \ee 
with equality if and only if $u\in\MR$.
We also have
\be \ER(u)=\f{\nm{u}_\La^2-S_{\La,q}\nm{u}_q^2}{\dist_\La(u,\MR)^2}=\f{\nm{u}_{\La,\nu_1}^2-\nm{u}_{q,\nu_1}^2}{\inf_{g\in\MR}\nm{u-g}_{\La,\nu_1}^2}. \ee 

For the translation mode and the nonlinear exponent gap, write
\be\label{theta} U:=-V',\qquad \theta:=q-2. \ee
\begin{theorem}\label{thm:local}
Fix $a,b$ with $a<b<a+1$. There are explicit constants $\ve_0=\ve_0(a,b)>0$ and $\ti\delta=\ti\delta(a,b)\in(0,1)$ such that, for every integer $d\ge2$ satisfying \eqref{case-1} and every $-V\le r\in H^1(\CR,\nu_1)$ with
\be\label{con-1} \nm{r}_{q,\nu_1}^2\le\ti\delta, \ee
and
\be\label{con-2}
\abr{r,V}_{\La,\nu_1}=\abr{r,U}_{\La,\nu_1}=0,
\ee
one has
\be\label{eq:local-stability}
\nm{V+r}_{\La,\nu_1}^2-\nm{V+r}_{q,\nu_1}^2
\ge\theta\ve_0\nm{r}_{\La,\nu_1}^2.
\ee
The constants do not depend on $d$. Choices are given in \eqref{eq:local-epsilon-choice} and \eqref{eq:local-delta-choice} below.
\end{theorem}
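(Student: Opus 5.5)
We follow the three-range truncation scheme of \cite{Dolbeault etc=2025}, transplanted to the weighted measure $V^{q}\,\rd\nu_1$ on the cylinder. Since $\nm{V}_{\La,\nu_1}=\nm{V}_{q,\nu_1}=1$ and $\abr{r,V}_{\La,\nu_1}=\int V^{q-1}r\,\rd\nu_1=0$ by \eqref{equ-C}, the deficit reduces to
\[
 1+\nm r_{\La,\nu_1}^2-\Bigl(\int (V+r)^q\rd\nu_1\Bigr)^{2/q}.
\]
Write $r=Vg$ with $g\ge-1$, so that $\int(V+r)^q\rd\nu_1=\int V^q(1+g)^q\rd\nu_1$. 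We fix a threshold $\gamma>0$ (small, independent of $d$) and a large $M$. We split $g=g_1+g_2+g_3$ with $g_1=\min\{\max\{g,-\gamma\},\gamma\}$, $g_2$ the part in the range $\gamma<|g|\le M$ (truncated), and $g_3$ the part beyond $M$. We set $r_k=Vg_k$. Since truncation of $g$ is not a truncation of $r$ in $H^1$, we instead truncate $r$ against $\gamma V$ and $MV$. The energies then split as $\nm r_{\La,\nu_1}^2\ge\sum_k\nm{r_k}_{\La,\nu_1}^2$ up to cross terms. These cross terms are nonnegative because the pieces have disjoint supports of gradients; the potential term $\La$ is also fine.

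The elementary inequality replacing Taylor expansion is a pointwise bound of the form
\[
 (1+g)^q\le 1+qg+\tfrac{q(q-1)}2 g_1^2+C\theta\,\bigl(\gamma\, g_1^2+ g_2^2\bigr)+(\text{terms in }g_2,g_3)+|g_3|^q\cdot(1+C\theta),
\]
valid uniformly for $2<q<3$, with all correction constants proportional to $\theta=q-2$ where needed. After raising to the power $2/q$ and using $(1+x)^{2/q}\le1+\tfrac2q x$, the deficit is bounded below by a sum of three groups of terms.

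The first group is $\nm{r_1}^2_{\La,\nu_1}-(q-1)\int V^{q-2}r_1^2\rd\nu_1-\text{(error)}$. It is controlled by the spectral gap: after projecting $r_1$ onto $\operatorname{span}\{V,U\}$, whose components are small because $r\perp V,U$ and $r-r_1$ is supported where $|g|>\gamma$, we get
\[
 \nm{r_1}^2_{\La,\nu_1}-(q-1)\int V^{q-2}r_1^2\ge\bigl(1-\tfrac{q-1}{\mu_3}\bigr)\nm{r_1}^2_{\La,\nu_1}-\text{proj. error}.
\]
Lemma~\ref{lem:bound}, with $x$ proportional to $\gamma$ plus projection errors, gives coercivity $\theta(A_{a,b}-B_{a,b}x)$, hence $\theta A_{a,b}/2$ for $x$ small.

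For the middle group, the key point is that $\{|g|>\gamma\}$ has small $V^q\rd\nu_1$-measure, at most $\gamma^{-q}\nm r_{q,\nu_1}^q\le\gamma^{-q}\ti\delta^{q/2}$. We must show that $\int V^{q-2}r_2^2$ and the projection errors are small compared with $\nm{r_2}_{\La}^2$, with only a factor of order $\theta$ being lost. Here we use Lemma~\ref{lemma:large eigenfunction}. We split $r_2$ into its spectral projection $P_K r_2$ below $\mu_{K,0}$, where $K$ is fixed so that $1-(q-1)/\mu_{K,0}$ exceeds a fixed constant times $\theta$ (possible uniformly in $d$), and the remainder. For the remainder,
\[
 \int V^{q-2}(r_2-P_Kr_2)^2\le\mu_{K,0}^{-1}\nm{r_2}^2_{\La}
\]
gives a gain. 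For the low part, Cauchy--Schwarz against the small support, together with the fourth-moment bound summed over at most $N_{K,a,b}$ eigenspaces from \eqref{eq:cutoff-count}, gives
\[
 \int V^{q-2}(P_Kr_2)^2\mathbf 1_{\{|g|>\gamma\}}\le C_{K,a,b}\,\bigl(\gamma^{-q}\ti\delta^{q/2}\bigr)^{1/2}\nm{r_2}^2_{\La}.
\]
This is small once $\ti\delta$ is chosen small relative to $\gamma$ and $K$.

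For the large tail $g_3$, the sharp inequality \eqref{CKN-C-1} applied to $r_3$ gives $\nm{r_3}^2_{q}\le\nm{r_3}^2_\La$. Since $|g_3|\ge M$, we have $\int V^q|g_3|^q\le \nm{r_3}_q^{q}$ with $\nm{r_3}_q^{q-2}\le\ti\delta^{(q-2)/2}$. The tail contributes $\tfrac2q\nm{r_3}_q^q\le\tfrac2q\ti\delta^{\theta/2}\nm{r_3}^2_\La$. The main difficulty is that $1-\tfrac2q\ti\delta^{\theta/2}\approx\tfrac\theta2(1+|\log\ti\delta|)$ is only of order $\theta$, which is exactly the scale needed; all constants must be tracked so that the losses are $c\theta$ with $c$ small relative to $A_{a,b}$.

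The expected main obstacle is the middle range. There, the fourth-moment projection estimate must be combined with the small-support bound so that the resulting loss is $o(1)\cdot\theta\nm{r_2}^2$ uniformly in $d$. This also requires checking that $1-(q-1)/\mu_{K,0}\gtrsim$ a constant (not merely $\theta$) for fixed $K$. We would first try $K=1$ or $2$ with the explicit $\eta$-formulas. Finally, $\ve_0$ is set to a small fraction of $A_{a,b}$, and $\ti\delta$ is defined through $\gamma,M,K$.
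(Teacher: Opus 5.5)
Your architecture (truncating $r$ against $\ga V$ and $MV$, a spectral gap for $r_1$, fourth moments plus small support for $r_2$, the sharp inequality for $r_3$) is the paper's. The middle range, however, fails as you set it up.

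\textbf{The middle range.} You ask that $1-(q-1)/\mu_{K,0}$ be of order one for fixed $K$, and you propose $K=1$ or $2$. This cannot hold. Since $\eta_K-\eta_0\le M_K$ is bounded in $d$, \eqref{mu-ij} gives $\mu_{K,0}-1=O(\theta)$, with constants depending on $K,a,b$. Hence $1-(q-1)/\mu_{K,0}\to0$ as $d\to\infty$.

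In your bookkeeping, $\int V^{q-2}(r_2-\Pi_Kr_2)^2\rd\nu_1\le\mu_{K,0}^{-1}\nm{r_2}_{\La,\nu_1}^2$, the energy of the low modes is discarded. Set $Q_2:=\int V^{q-2}r_2^2\rd\nu_1$. The inequality you need, $\nm{r_2}_{\La,\nu_1}^2\ge(q-1+C_{\ga,\ve,M}\theta)Q_2$, then forces the low-mode fraction of $Q_2$ to be $O(\theta)$. You say this yourself when you ask for an ``$o(1)\cdot\theta$'' loss. It means $\ti\delta$ must shrink with $d$.

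The missing idea is to keep that energy. All eigenvalues are at least $1$, and the spectral projections are orthogonal for both inner products, so
\[
\nm{r_2}_{\La,\nu_1}^2\ge Q_2+(\mu_{K,0}-1)\Bigl(Q_2-\int V^{q-2}(\Pi_Kr_2)^2\rd\nu_1\Bigr).
\]
It then suffices that the low projection carry at most half of $Q_2$. That is a $d$-independent smallness condition on $\ti\delta$, obtained from Lemma~\ref{lemma:large eigenfunction} and the count $N_{K,a,b}$. One then uses $(\mu_{K,0}-1)/\theta\ge K$, with $K$ large compared with the middle-range Taylor constant $C_{\ga,\ve,M}$ of Proposition~\ref{lemma-1}. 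That constant is large (of order $\ga^{-1}$ in \eqref{eq:explicit-truncation-constant}), so small $K$ cannot work.

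\textbf{The cross terms.} The pointwise expansion necessarily contains $2V^{q-2}(r_1r_2+r_1r_3+r_2r_3)$ with coefficients of order one. Every gain in the scheme is of order $\theta$, so these terms cannot simply be absorbed. For example, if $r$ slightly exceeds $\ga V$ on a set, then $\ga\int V^{q-1}r_2\rd\nu_1$ is linear in $r_2$, whereas the available gains are quadratic in $r_2$.

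The energy cross terms do not vanish either. On $\{r>\ga V\}$ one has $\nabla r_1=\ga\nabla V$, so the gradient supports overlap. Discarding these terms as merely nonnegative loses exactly what is needed. The required fact is the exact identity $\abr{r_1,r_2}_{\La,\nu_1}=\ga\abr{V,r_2}_{\La,\nu_1}=\ga\int V^{q-1}r_2\rd\nu_1$, and likewise using $r_1+r_2=MV$ on $\{r_3>0\}$. After the factor $2/q$, these cancel the pointwise cross terms up to $2\ga\theta(1/q-\ve_0)\int V^{q-1}r_2\rd\nu_1\ge0$, plus the analogous $r_3$ term.

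\textbf{Low dimensions.} Your pointwise bound is stated only for $q<3$. The theorem also covers $2\le d\le5$, where $q$ can be arbitrarily large (for $d=2$, $q=2/(b-a)$). These finitely many dimensions need a separate fixed-dimensional estimate, as the paper provides.
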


\subsection{Truncation of the perturbation}
The following proposition bounds
    \[(V+r)^q-V^q-qV^{q-1}r\]
in terms of the three truncations
\be\begin{aligned}
r_1&:=\min\{r,\ga V\},\\
r_2&:=\min\{(r-\ga V)_+,(M-\ga)V\},\\
r_3&:=(r-MV)_+,
\end{aligned}\ee
where $0<\ga\le M$.

\begin{proposition}\label{lemma-1}
For $\ve>0$, $M>0$ and $0<\ga<M/2$, there is a constant $C_{\ga,\ve,M}>0$ such that, for every $q\in[2,3]$ and $r\ge-V$,
\be\label{est-1}\begin{aligned}
    (V+r)^q-V^q-qV^{q-1}r 
    &\le \sbr{\f{q}{2}(q-1)+2\ga\theta}V^{q-2}r_1^2+\sbr{\f{q}{2}(q-1)+C_{\ga,\ve,M}\theta}V^{q-2}r_2^2 \\
        &~\quad +2V^{q-2}\sbr{r_1 r_2+r_1r_3+ r_2r_3}+(1+\ve\theta)r_3^q.
\end{aligned}\ee
\end{proposition}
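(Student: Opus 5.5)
By homogeneity the inequality \eqref{est-1} is pointwise and scale-invariant in $V$. I would divide by $V^q$ and set $x:=r/V\ge-1$, so that $r_1/V=x_1:=\min\{x,\ga\}$, $r_2/V=x_2:=\min\{(x-\ga)_+,M-\ga\}$ and $r_3/V=x_3:=(x-M)_+$. The claim becomes a one-variable inequality
\[
 \phi_q(x):=(1+x)^q-1-qx\le R_q(x),
\]
where $R_q$ is the right-hand side of \eqref{est-1} written in $x_1,x_2,x_3$. It has to hold uniformly for $q\in[2,3]$, with $\theta=q-2$. The guiding observation is the case $q=2$: then $\phi_2(x)=x^2=(x_1+x_2+x_3)^2$ and $R_2$ is exactly this square, so equality holds. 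Every correction must therefore be shown to be $O(\theta)$ with the right structure. I would split into the three ranges $x\le\ga$, $\ga<x\le M$ and $x>M$.

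\emph{Range $x\le\ga$.} Here $x_2=x_3=0$, and we need $\phi_q(x)\le\bigl(\tfrac q2(q-1)+2\ga\theta\bigr)x^2$. Taylor's formula with integral remainder gives
\[
 \phi_q(x)=\tfrac{q(q-1)}2x^2+\tfrac{q(q-1)\theta}{6}(1+\xi)^{\theta-1}x^3 .
\]
For $x\le0$ the cubic term is nonpositive, since $x^3\le0$. For $0\le x\le\ga$ we use $q(q-1)/6\le1$ and $(1+\xi)^{\theta-1}\le1$. This bounds the cubic term by $\theta x^3\le\ga\theta x^2$, which is better than required.

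\emph{Range $\ga<x\le M$.} Here $x_1=\ga$, $x_2=x-\ga\in(0,M-\ga]$ and $x_3=0$. Expanding $\tfrac{q(q-1)}2(\ga+x_2)^2$ shows that the only structural loss in $R_q$ is that the cross term is $2\ga x_2$ instead of $q(q-1)\ga x_2$. This loss equals $\theta(q+1)\ga x_2\le4\theta\ga x_2$. Adding the cubic Taylor excess, which is at most $\theta x^3$, it suffices to prove
\[
 \theta\bigl(4\ga x_2+x^3\bigr)\le\theta\bigl(2\ga^3+C x_2^2\bigr)\quad\text{for }0<x_2\le M-\ga .
\]
This is a first try, and the constant in front of $x^3$ must be sharpened. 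Near $x=\ga$ the cubic excess is close to $\tfrac{q(q-1)\theta}6\ga^3\le\theta\ga^3$, which leaves a margin of about $\theta\ga^3$ against $2\ga\theta x_1^2=2\theta\ga^3$. That margin, together with $C\theta x_2^2$, must dominate a term linear in $x_2$ of size $O(\theta(\ga+\ga^2)x_2)$. A discriminant argument for $a+Cx_2^2\ge bx_2$ shows this works once $C\gtrsim(1+\ga)^2/\ga$, enlarged by $M$-dependent factors to cover all $x_2\le M$. This is where the dependence $C_{\ga,\ve,M}$ enters.

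\emph{Range $x>M$.} Here $x_1=\ga$, $x_2=M-\ga$ and $x_3=x-M$. I would treat $g(q,x):=R_q(x)-\phi_q(x)$ as a function of $q$ with $g(2,x)=0$. It then suffices to bound $-\partial_qg\le0$ up to the available slack, which comes from $C\theta x_2^2$, $\ve\theta x_3^q$ and the constraint $\ga<M/2$.

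For $x_3$ bounded, say $x_3\le X(\ve,M)$, all quantities are smooth and bounded. The $\theta$-slack $C\theta(M-\ga)^2$ absorbs the $O(\theta)$ error once $C$ is large. Here $\ga<M/2$ guarantees $(M-\ga)^2\ge M^2/4$ is not degenerate.

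For $x_3$ large, the leading behaviour is $\phi_q(x)=(x_3+M+1)^q-\ldots$ versus $(1+\ve\theta)x_3^q+2Mx_3+\ldots$. The main obstacle is the uniformity in $q$ of this comparison. Writing $(x_3+M+1)^q=x_3^q(1+(M+1)/x_3)^q$ and expanding, the subleading term $q(M+1)x_3^{q-1}$ must be beaten by $2Mx_3+\ve\theta x_3^q$, while the $q$-derivatives produce factors $\log(1+x)$. I would first split at $x_3^{\theta}\gtrless$ a constant, to handle the $\log$ by $\ve\theta x_3^q\ge\ve\theta x_3^2\log(\cdot)$-type bounds. If that fails, I would fall back on the elementary inequality
\[
 (A+B)^q\le(1+\ve\theta)B^q+C_{\ve}(q)A^{q-2}\bigl(A^2+2AB\bigr)\qquad(A=M+1,\ B=x_3),
\]
proved by the same $g(2)=0$ plus $q$-derivative scheme.
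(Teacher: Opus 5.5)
Your overall route is the paper's. You divide by $V^q$, split into $x\le\ga$, $\ga<x\le M$ and $x>M$, and use the cubic Taylor bound in the first two ranges. In the third range you use exactness at $q=2$ plus a $q$-derivative, split at a threshold for $z:=x-M$. The first two ranges and the bounded-$z$ case are fine. In fact your ``first try'' in the middle range already closes, so no sharpening is needed. Expanding $(\ga+x_2)^3$ and using $3\ga+x_2\le M+2\ga$ reduces it to exactly the paper's inequality $-\ga^3+(4\ga+3\ga^2)x_2+(M+2\ga)x_2^2\le Cx_2^2$. This holds for $C\ge\frac{(4+3\ga)^2}{4\ga}+M+2\ga$.

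\textbf{The gap: large $z$.} This is the case where uniformity in $q$ is decided, and there your argument as written fails.

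\emph{The stated comparison is wrong.} You require $q(M+1)z^{q-1}$ to be beaten by $2Mz+\ve\theta z^q$, which drops the term $-qz$ contained in $-qx$ in $\phi_q$. Without it the claim is false at $q=2$, where $2(M+1)z>2Mz$ and $\ve\theta z^q=0$. For $q$ near $2$ it holds only for $z\gtrsim 2/(\ve\theta)$. Below that threshold the excess is of order $z$, not $O(\theta)$, so no $\theta$-slack can absorb it.

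\emph{The proposed remedies do not fix it.} The dropped term is what makes a $q$-dependent split at $z^\theta\gtrless\mathrm{const}$ look necessary. But that split, and a bound of the type $\ve\theta z^q\ge\ve\theta z^2\log(\cdot)$ (which needs $z^\theta\gtrsim\log z$), do not give a threshold independent of $q$. Your fallback inequality is stated without proof and is no easier. To be usable it forces $C_\ve(2)=1$, so it is as sharp at $q=2$ as the original estimate and needs exactly the large-$z$ analysis that is missing.

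\emph{The missing observation.} Once $-qz$ is kept, the linear terms cancel exactly at $q=2$. Moreover, in your own derivative scheme the slack $\ve\theta z^q$ contributes $\ve z^q(1+\theta\log z)$ to $\partial_q$, with no factor $\theta$. Concretely, set $R(q,z):=(1+M+z)^q-1-q(M+z)-\tfrac{q(q-1)}2M^2-2Mz-(1+\ve\theta)z^q$ and take $z\ge1$. The mean value theorem for $y\mapsto y^q\log y$ gives
\[
\partial_qR\le(1+M+z)^q\log(1+M+z)-z^q\log z-\ve z^q\le(1+M)(z+1+M)^{q-1}\bigl(3\log(z+1+M)+1\bigr)-\ve z^q .
\]
The right side is $\le0$ for $z\ge Z(\ve,M)$, uniformly in $q\in[2,3]$. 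Hence $R(q,z)\le R(2,z)=0$ there. The leftover correction $\theta[(q+1)\ga(M-\ga)-2\ga^3]\le4\theta\ga(M-\ga)$ is then absorbed by $C\theta(M-\ga)^2$. The paper reaches the same point by the Taylor route, bounding $R(q,z)/(\theta z^q)$ by a quantity of order $(1+M)^3(\log z)/z$ minus $\ve$.
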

\begin{proof}
Divide the claimed inequality by $V^q$ and put $t=r/V$. This is the three-part pointwise estimate used in \cite[Section 2]{Dolbeault etc=2025}. For completeness, the following elementary choice makes the constant explicit:
\[Z:=\left(\frac{100(1+M)}{\min\{\ve,1\}}\right)^2,\]
\be\label{eq:explicit-truncation-constant}
\begin{aligned}
C_{\ga,\ve,M}:=\max\Biggl\{&
\frac{(4+3\ga)^2}{4\ga}+M+2\ga,\\
&\frac{(1+M+Z)^3\log(1+M+Z)+(1+\ve)/(2e)+4\ga(M-\ga)}{(M-\ga)^2}\Biggr\}.
\end{aligned}
\ee
For $-1\le t\le\ga$, use
\[(1+t)^q\le1+qt+\frac{q(q-1)}2t^2+(q-2)t_+^3.\]
For $\ga<t\le M$, set $w=t-\ga$. After subtracting the desired quadratic terms, the remainder divided by $q-2$ is at most
\[-\ga^3+(4\ga+3\ga^2)w+(M+2\ga)w^2
\le\left(\frac{(4+3\ga)^2}{4\ga}+M+2\ga\right)w^2.\]
For $t=M+z$ with $z\ge0$, consider the remainder
\[R(q,z):=(1+M+z)^q-1-q(M+z)-\frac{q(q-1)}2M^2-2Mz
-(1+\ve(q-2))z^q.\]
Since $R(2,z)=0$, the derivative bound
\[\begin{aligned}
\partial_qR(q,z)
&\le(1+M+Z)^3\log(1+M+Z)+\frac{1+\ve}{2e}\\
&\le C_{\ga,\ve,M}(M-\ga)^2-4\ga(M-\ga)
\end{aligned}\]
gives the required estimate when $0\le z\le Z$. For $z\ge Z$, the same cubic Taylor bound yields
\[\frac{R(q,z)}{(q-2)z^q}
\le\frac{(1+M)(1+2\log z)}z
+\frac{(1+M)^2(2+\log z)}{z^2}
+\frac{(1+M)^3}{z^3}-\ve\le0.\]
For the last inequality, the three positive terms decrease for $z\ge Z$; use
\[\log Z\le\sqrt Z\]
and the definition of $Z$. In both ranges, the remaining correction is bounded by
\[(q-2)\bigl((q+1)\ga(M-\ga)-2\ga^3\bigr)
\le4(q-2)\ga(M-\ga),\]
which is included in the choice of $C_{\ga,\ve,M}$.
This proves the claim for $2<q\le3$. At $q=2$ it is an equality.
\end{proof}

For the truncation argument below, assume $d\ge6$, so that $2<q<3$. The remaining dimensions are treated at the end of the section. Assume that $r\ge-V$ satisfies \eqref{con-1}--\eqref{con-2}. We prove the local estimate for $0<\ve_0<1/3$ by choosing $\ti\delta$ sufficiently small.
For auxiliary parameters $\ve_1,\ve_2>0$, apply Proposition \ref{lemma-1} with
\be\label{ve-2}
\ga=\frac{\ve_1}{2},\qquad M=1+2\ga,\qquad \ve=\ve_2,
\qquad C_{\ve_1,\ve_2}:=C_{\ga,\ve_2,M}.
\ee
Then $r=r_1+r_2+r_3$. Since $\nm{V}_{\La,\nu_1}=\nm{V}_{q,\nu_1}=1$ and $\abr{r,V}_{\La,\nu_1}=0$, we get
\[\nm{V+r}_{\La,\nu_1}^2=1+\nm{r}_{\La,\nu_1}^2.\]
Integrating \eqref{est-1}, the linear term vanishes. Using $(1+x)^{2/q}\le 1+\f2qx$ for $q>2$ and $x>0$, we get 
\[\begin{aligned}
\nm{V+r}_{q,\nu_1}^2
&\le1+\left(q-1+\frac{2\ve_1}{q}\theta\right)\int_\CR V^{q-2}r_1^2\rd\nu_1 +\left(q-1+\frac{2C_{\ve_1,\ve_2}}q\theta\right)\int_\CR V^{q-2}r_2^2\rd\nu_1\\
&\quad+\frac2q(1+\ve_2\theta)\int_\CR r_3^q\rd\nu_1
+\frac4q\int_\CR V^{q-2}(r_1r_2+r_1r_3+r_2r_3)\rd\nu_1\\
&\le1+(q-1+\ve_1\theta)\int_\CR V^{q-2}r_1^2\rd\nu_1
+(q-1+C_{\ve_1,\ve_2}\theta)\int_\CR V^{q-2}r_2^2\rd\nu_1\\
&\quad+\frac2q(1+\ve_2\theta)\int_\CR r_3^q\rd\nu_1
+\frac4q\int_\CR V^{q-2}(r_1r_2+r_1r_3+r_2r_3)\rd\nu_1,
\end{aligned}\]
where $2/q\le1$ was used in the last step.
For $0<\ve_0\le\theta^{-1}$, define
\[\begin{aligned}
I_1&:=(1-\theta\ve_0)\nm{r_1}_{\La,\nu_1}^2
-(q-1+\ve_1\theta)\int_\CR V^{q-2}r_1^2\rd\nu_1+\sigma_0\theta\int_\CR V^{q-2}(r_2^2+r_3^2)\rd\nu_1,\\
I_2&:=(1-\theta\ve_0)\nm{r_2}_{\La,\nu_1}^2-(q-1+C_{\ve_1,\ve_2}\theta+\sigma_0\theta)\int_\CR V^{q-2}r_2^2\rd\nu_1,\\
I_3&:=(1-\theta\ve_0)\nm{r_3}_{\La,\nu_1}^2-\frac2q(1+\ve_2\theta)\int_\CR r_3^q\rd\nu_1-\sigma_0\theta\int_\CR V^{q-2}r_3^2\rd\nu_1,\\
I_4&:=2(1-\theta\ve_0)\abr{r_1,r_2}_{\La,\nu_1}-\frac4q\int_\CR V^{q-2}r_1r_2\rd\nu_1,\\
I_5&:=2(1-\theta\ve_0)\abr{r_1+r_2,r_3}_{\La,\nu_1}-\frac4q\int_\CR V^{q-2}(r_1+r_2)r_3\rd\nu_1.
\end{aligned}\]
The preceding estimate becomes
\begin{equation}\label{eq:deficit-decomposition}
\nm{V+r}_{\La,\nu_1}^2-\nm{V+r}_{q,\nu_1}^2
\ge\theta\ve_0\nm{r}_{\La,\nu_1}^2+I_1+I_2+I_3+I_4+I_5.
\end{equation}
We show that $I_4,I_5,I_1,I_3,I_2$ are nonnegative, in this order.

\vskip0.1in
\noindent {\bf Bounds on $I_4$ and $I_5$.}
Since $r_2=0$ on $\{r<\ga V\}$ and $r_1=\ga V$ on $\{r\ge\ga V\}$,
\be\begin{aligned}
    I_4&=2\ga(1-\theta\ve_0)\abr{V,r_2}_{\La,\nu_1}
    -\frac{4\ga}{q}\int_\CR V^{q-1}r_2\rd\nu_1\\
    &=(\tf1q-\ve_0)\ve_1\theta\int_\CR V^{q-1}r_2\rd\nu_1\ge 0,
\end{aligned}\ee 
provided $\ve_0<1/q$. Similarly, we have
\[r_3=0\quad\text{on }\{r<MV\},\qquad
r_1+r_2=MV\quad\text{on }\{r\ge MV\},\]
so that
\be I_5=2M(\tf1q-\ve_0)\theta\int_\CR V^{q-1}r_3\rd\nu_1\ge 0. \ee 

\vskip0.1in
\noindent {\bf Bound on $I_1$.}
Recall that $V\in E_{\mu_1}$ and $U\in E_{\mu_2}$, where $\mu_1=1$, $\mu_2=q-1$ and $\mu_3>q-1$. Let $A_{a,b},B_{a,b}$ be given by Lemma \ref{lem:bound}.
We have
\begin{lemma}\label{lem:I1}
Let $0<\ve_0<A_{a,b}/B_{a,b}$, and set
\be\label{ve-1}
\ve_1:=\frac12(A_{a,b}-B_{a,b}\ve_0),\qquad
C_0:=1+\ve_0+2\ve_1+2\sqrt3(\ve_0+\ve_1).
\ee
For $d\ge6$ and $r$ satisfying \eqref{con-1}--\eqref{con-2}, one has $I_1\ge0$ provided
\be\label{eq:I1-sigma-threshold}
\sigma_0\ge\frac{C_0^2\ti\delta}{2\ve_1\ga^2}.
\ee
\end{lemma}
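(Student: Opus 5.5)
The plan is to decompose $r_1$ spectrally with respect to the weight $V^{q-2}\rd\nu_1$, separating its components along $V$ and $U$ from a remainder $w$ that sees the gap $\mu_3>q-1$. The orthogonality conditions \eqref{con-2} show that the $V$ and $U$ components of $r_1$ are produced only by $r_2+r_3$. Since $r_2+r_3$ is supported on the set $\{r\ge\ga V\}$, which has small $V^q$-measure, those components are small and can be absorbed by the term $\sigma_0\theta\int V^{q-2}(r_2^2+r_3^2)\rd\nu_1$.

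\emph{Step 1: decomposition.} Because $V$ and $U$ are eigenfunctions of \eqref{equ-eigen} with $\mu_1=1$ and $\mu_2=q-1$, I write
\[
r_1=\alpha V+\beta U+w,
\]
where $w$ is orthogonal to $V$ and $U$ in both $\abr{\cdot,\cdot}_{\La,\nu_1}$ and $\int V^{q-2}\cdot\,\cdot\,\rd\nu_1$. Eigenfunctions are orthogonal in both inner products, so the quadratic forms in $I_1$ split into three diagonal pieces. Using $\nm V_{\La,\nu_1}^2=\int V^q\rd\nu_1=1$ and $\nm U_{\La,\nu_1}^2=(q-1)\int V^{q-2}U^2\rd\nu_1$, the first two parts of $I_1$ contribute
\[
-\theta(1+\ve_0+\ve_1)\alpha^2-\theta\bigl((q-1)\ve_0+\ve_1\bigr)\beta^2\int V^{q-2}U^2\rd\nu_1 .
\]
For $w$, the min-max characterization of $\mu_3$ gives $\nm w_{\La,\nu_1}^2\ge\mu_3\int V^{q-2}w^2\rd\nu_1$. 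Hence the $w$-part is at least
\[
\theta\Bigl[\frac{\mu_3-(q-1)}{\theta}-\mu_3\ve_0-\ve_1\Bigr]\int V^{q-2}w^2\rd\nu_1 .
\]
Lemma~\ref{lem:bound} with $x=\ve_0$ bounds this from below by $\theta(A_{a,b}-B_{a,b}\ve_0-\ve_1)\int V^{q-2}w^2\rd\nu_1=\theta\ve_1\int V^{q-2}w^2\rd\nu_1\ge0$, by the choice \eqref{ve-1}. (Here $(1-\theta\ve_0)\mu_3\ge\mu_3-\mu_3\theta\ve_0$ is used.)

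\emph{Step 2: the components along $V$ and $U$.} Set $\rho:=r_2+r_3=r-r_1\ge0$. By \eqref{con-2}, $\abr{r_1,V}=-\abr{\rho,V}$ and $\abr{r_1,U}=-\abr{\rho,U}$. By \eqref{equ-C} and the eigen-equation these equal $-\int V^{q-1}\rho\,\rd\nu_1$ and $-(q-1)\int V^{q-2}U\rho\,\rd\nu_1$, respectively. Since $|U|\le\sqrt{\cdot}\,V$ up to an explicit factor ($|V'|\le\frac{\theta}{2}\sqrt\La\,\ldots$, to be checked via $V'/V=-\sqrt\La\tanh$), Cauchy--Schwarz restricted to $S:=\{r\ge\ga V\}$ gives
\[
\alpha^2+\beta^2\!\int V^{q-2}U^2\rd\nu_1\le C\Bigl(\int_S V^q\rd\nu_1\Bigr)\int V^{q-2}\rho^2\rd\nu_1 .
\]
On $S$ we have $V^q\le\ga^{-2}V^{q-2}r^2$, and H\"older with $\nm V_{q,\nu_1}=1$ gives $\int V^{q-2}r^2\rd\nu_1\le\nm r_{q,\nu_1}^2\le\ti\delta$. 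Therefore $\int_S V^q\rd\nu_1\le\ti\delta/\ga^2$. With $\rho^2\le2(r_2^2+r_3^2)$, the negative contributions of Step~1 are at most
\[
\theta\,\frac{C_0^2\ti\delta}{2\ve_1\ga^2}\int V^{q-2}(r_2^2+r_3^2)\rd\nu_1 .
\]
The factor $C_0$ from \eqref{ve-1} is meant to collect $1+\ve_0+\ve_1$ and $(q-1)\ve_0+\ve_1$ together with the $\sqrt3$ from $q-1<2$ and the $U$-weight. The $1/\ve_1$ arises if I instead absorb the cross terms $2\alpha\abr{V,w}$-type couplings through Young's inequality against the $\theta\ve_1\int V^{q-2}w^2$ reserve; I would follow whichever bookkeeping reproduces exactly $C_0^2/(2\ve_1)$. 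The condition \eqref{eq:I1-sigma-threshold} then yields $I_1\ge0$.

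\emph{Main obstacle.} I expect the delicate step to be the exact constant bookkeeping in Step~2. This means bounding $\int V^{q-2}U^2\rd\nu_1$ relative to $\nm U_{\La,\nu_1}^2$ uniformly in $d$, and checking that every factor is dimension-free. All weights should reduce to the normalized measure $\nu_1$ and to $V^q$, so no $d$ should enter. I would first try the crude pointwise bound $|U|\le\sqrt\La\,V$ together with $\La\int V^{q-2}U^2\le\ldots$, and if that loses $d$, fall back on the identity $\nm U_{\La,\nu_1}^2=(q-1)\int V^{q-2}U^2\rd\nu_1$ to normalize the $\beta$-component directly.
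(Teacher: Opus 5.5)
Your Step~1 is exactly the paper's spectral step, and your Cauchy--Schwarz bound for the $V$-component is fine. The $U$-component in Step~2, however, has a genuine gap, exactly where you suspected. The pointwise route cannot be made dimension-free. Indeed, $U/V=\sqrt\La\tanh\bigl(\tfrac{q-2}{2}\sqrt\La\,t\bigr)$, whereas Beta integrals give $\int_\CR V^{q-2}U^2\rd\nu_1=\La\,\frac{q-2}{3q-2}$. Hence the normalized mode $Y:=U\big/\bigl(\int_\CR V^{q-2}U^2\rd\nu_1\bigr)^{1/2}$ has $\sup|Y/V|^2=\frac{3q-2}{q-2}$, which is of order $d/(1+a-b)$. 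Your Cauchy--Schwarz step therefore only gives
$c_2:=\beta^2\int_\CR V^{q-2}U^2\rd\nu_1\le\frac{3q-2}{q-2}\bigl(\int_SV^q\rd\nu_1\bigr)\int_\CR V^{q-2}\rho^2\rd\nu_1$.
Absorbing this would force $\sigma_0\gtrsim d\,\ti\delta/\ga^2$, which \eqref{eq:I1-sigma-threshold} does not provide. Your fallback through $\nm U_{\La,\nu_1}^2=(q-1)\int_\CR V^{q-2}U^2\rd\nu_1$ changes nothing. The loss is the gap between the supremum of $Y/V$ and its moments under $V^q\rd\nu_1$: the second and fourth moments stay bounded, while the supremum grows like $\sqrt d$. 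The missing ingredient is the dimension-free fourth moment
\[
\int_\CR V^{q-4}Y^4\rd\nu_1=\frac{\mathrm B(\frac12,\frac q{q-2})\,\mathrm B(\frac52,\frac q{q-2})}{\mathrm B(\frac32,\frac q{q-2})^2}=\frac{3(3q-2)}{5q-6}\le3,
\]
used in H\"older's inequality with exponents $(4,4,2)$:
\[
\Bigl(\int_\CR V^{q-2}Y\rho\,\rd\nu_1\Bigr)^2\le\Bigl(\int_\CR V^{q-4}Y^4\rd\nu_1\Bigr)^{1/2}\Bigl(\int_SV^q\rd\nu_1\Bigr)^{1/2}\int_\CR V^{q-2}\rho^2\rd\nu_1 .
\]
The paper applies the same bound to $v=V$, whose fourth moment is $1$, so that both components produce the same mixed term.

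Even after this repair, your bookkeeping does not reach the stated threshold. H\"older produces $\bigl(\int_SV^q\rd\nu_1\bigr)^{1/2}$. With your bound $\int_SV^q\rd\nu_1\le\ti\delta/\ga^2$, the $U$-part then costs a multiple of $\frac{\sqrt{\ti\delta}}{\ga}Q_{23}$, where $Q_{23}:=\int_\CR V^{q-2}(r_2^2+r_3^2)\rd\nu_1$. For small $\ti\delta$ this is not dominated by $\sigma_0Q_{23}$ under \eqref{eq:I1-sigma-threshold}. Nor does the factor $1/\ve_1$ come from cross terms: your splitting is orthogonal in both inner products, so there are none.

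The paper instead keeps the gap reserve and pairs it with the measure of $S$. Write $\ve_1\int_\CR V^{q-2}w^2\rd\nu_1=\ve_1(Q_1-c_1-c_2)$, with $Q_1:=\int_\CR V^{q-2}r_1^2\rd\nu_1$ and $c_1:=\alpha^2$. This retains $\ve_1Q_1$ and raises the coefficients to $1+\ve_0+2\ve_1$ and $2(\ve_0+\ve_1)$, using $q-1\le2$; together with the factor $\sqrt3$ they combine into $C_0$. Since $r_1=\ga V$ on $S$, one has $\int_SV^q\rd\nu_1\le Q_1/\ga^2$. Also $\int_\CR V^{q-2}\rho^2\rd\nu_1\le\min\{\ti\delta,2Q_{23}\}\le\sqrt{2\ti\delta Q_{23}}$. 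Therefore
\[
\frac{I_1}{\theta}\ge\ve_1Q_1+\sigma_0Q_{23}-\frac{C_0\sqrt{2\ti\delta}}{\ga}\sqrt{Q_1Q_{23}}.
\]
This quadratic form in $(\sqrt{Q_1},\sqrt{Q_{23}})$ is nonnegative whenever $2C_0^2\ti\delta/\ga^2\le4\ve_1\sigma_0$, which is precisely \eqref{eq:I1-sigma-threshold}.
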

\begin{proof}
Write
\[\ti r_1=r_1-\beta_1V-\beta_2U,\]
with
\[\beta_1=\int_\CR V^{q-1}r_1\rd\nu_1,\quad \beta_2=\frac{\int_\CR V^{q-2}Ur_1\rd\nu_1}{\int_\CR V^{q-2}U^2\rd\nu_1}.\]
This is the projection onto the complement of $E_{\mu_1}\oplus E_{\mu_2}$, both for the energy inner product and for the weighted $L^2$ inner product. 
In this proof, we put
\[
Q_1:=\int_\CR V^{q-2}r_1^2\rd\nu_1,\quad Q_{23}:=\int_\CR V^{q-2}(r_2^2+r_3^2)\rd\nu_1,
\]
and
\[c_1:=\beta_1^2,\quad  c_2:=\beta_2^2\int_\CR V^{q-2}U^2\rd\nu_1.\]
The spectral gap and Lemma \ref{lem:bound} give
\[\begin{aligned}
\frac{I_1}{\theta}
&\ge\left(\frac{\mu_3-(q-1)}{q-2}-\mu_3\ve_0-\ve_1\right)(Q_1-c_1-c_2)
+\sigma_0Q_{23}\\
&\qquad -(1+\ve_0+\ve_1)c_1-\bigl((q-1)\ve_0+\ve_1\bigr)c_2\\
&\ge\ve_1Q_1+\sigma_0Q_{23}
-(1+\ve_0+2\ve_1)c_1-2(\ve_0+\ve_1)c_2,
\end{aligned}\]
where $q-1\le2$ and $A_{a,b}-B_{a,b}\ve_0=2\ve_1$ were used in the last line.

To control the lost orthogonality, set
\[Y:=\frac{U}{\left(\int_\CR V^{q-2}U^2\rd\nu_1\right)^{1/2}}.\]
For $v=V$ or $v=Y$, orthogonality of $r$ and H\"older's inequality yield
\[\begin{aligned}
\left(\int_\CR V^{q-2}vr_1\rd\nu_1\right)^2
&=\left(\int_\CR V^{q-2}v(r_2+r_3)\rd\nu_1\right)^2\\
&\le\left(\int_\CR V^{q-4}v^4\rd\nu_1\right)^{1/2}
\left(\int_{\{r_2+r_3>0\}}V^q\rd\nu_1\right)^{1/2}
\int_\CR V^{q-2}(r_2+r_3)^2\rd\nu_1.
\end{aligned}\]
Since $\{r_2+r_3>0\}\subset\{r_1\ge\ga V\}$, we have
\[\int_{\{r_2+r_3>0\}}V^q\rd\nu_1\le\frac{Q_1}{\ga^2}.\]
Also, $0\le r_2+r_3\le r_+$, and weighted H\"older gives
\[\int_\CR V^{q-2}(r_2+r_3)^2\rd\nu_1
\le\min\{\ti\delta,2Q_{23}\}\le\sqrt{2\ti\delta Q_{23}}.\]
The normalized fourth moment for $v=V$ is $\int_\CR V^q\rd\nu_1=1$. For $v=Y$, substituting $z=\tanh((q-2)\sqrt\La\,t/2)$ and evaluating the beta integrals gives
\[\int_\CR V^{q-4}Y^4\rd\nu_1
=\frac{\mathrm B(\tfrac12,\tfrac q{q-2})\mathrm B(\tfrac52,\tfrac q{q-2})}{\mathrm B(\tfrac32,\tfrac q{q-2})^2}
=\frac{3(3q-2)}{5q-6}\le3.\]
Consequently,
\[c_1\le\frac{\sqrt{2\ti\delta}}{\ga}\sqrt{Q_1Q_{23}},\qquad
c_2\le\sqrt3\frac{\sqrt{2\ti\delta}}{\ga}\sqrt{Q_1Q_{23}}.\]
It follows that
\[\frac{I_1}{\theta}\ge\ve_1Q_1+\sigma_0Q_{23}
-\frac{C_0\sqrt{2\ti\delta}}{\ga}\sqrt{Q_1Q_{23}}\ge0\]
by \eqref{eq:I1-sigma-threshold} and the nonpositive-discriminant condition for this quadratic form.
\end{proof}

Define
\be\label{delta-1}
\delta_1:=\frac{4\ve_1\ve_2\ga^2}{qC_0^2}.
\ee
\begin{corollary}\label{cor:I1-smallness}
Under the hypotheses of Lemma \ref{lem:I1}, the condition
\be\label{con:ti delta}
0<\ti\delta\le\delta_1\frac{q\sigma_0}{2\ve_2}
\ee
implies $I_1\ge0$.
\end{corollary}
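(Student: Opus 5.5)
This corollary follows directly from Lemma \ref{lem:I1}. I will show that \eqref{con:ti delta} is equivalent to the threshold \eqref{eq:I1-sigma-threshold}, after which that lemma gives $I_1\ge0$.

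First, I substitute the definition \eqref{delta-1} of $\delta_1$ into the right-hand side of \eqref{con:ti delta}:
\[
\delta_1\frac{q\sigma_0}{2\ve_2}
=\frac{4\ve_1\ve_2\ga^2}{qC_0^2}\cdot\frac{q\sigma_0}{2\ve_2}
=\frac{2\ve_1\ga^2\sigma_0}{C_0^2}.
\]
The factors $q$ and $\ve_2$ cancel. Hence \eqref{con:ti delta} is the same as
\[
0<\ti\delta\le\frac{2\ve_1\ga^2\sigma_0}{C_0^2}.
\]

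Next, I check that all quantities involved are positive. By \eqref{ve-1}, $\ve_1>0$ because $\ve_0<A_{a,b}/B_{a,b}$. The constant $C_0$ is positive by construction. By \eqref{ve-2}, $\ga=\ve_1/2>0$. Since these constants are positive, I can multiply the last inequality by $C_0^2/(2\ve_1\ga^2)$ without changing its direction. This gives
\[
\sigma_0\ge\frac{C_0^2\ti\delta}{2\ve_1\ga^2},
\]
which is exactly \eqref{eq:I1-sigma-threshold}.

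All remaining hypotheses of Lemma \ref{lem:I1} are assumed in the corollary: $d\ge6$, the constraint $r\ge-V$, conditions \eqref{con-1}--\eqref{con-2}, and the choice of $\ve_0$. Lemma \ref{lem:I1} therefore applies and yields $I_1\ge0$.

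There is no real obstacle here. The only point that needs care is to confirm that $\ve_2$ and $q$ cancel completely. This matters because the corollary is presumably phrased through $\delta_1$ so that $\sigma_0$ can later be fixed in terms of $\ve_2$ when $I_3$ and $I_2$ are handled.
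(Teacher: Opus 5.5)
Your proof is correct and matches the paper's, which gives no separate argument because the corollary is just this rearrangement. Substituting \eqref{delta-1} gives $\delta_1\frac{q\sigma_0}{2\ve_2}=\frac{2\ve_1\ga^2\sigma_0}{C_0^2}$, so the condition is equivalent to \eqref{eq:I1-sigma-threshold}, and the cancellation of $\ve_2$ is legitimate because it was introduced as a positive parameter in \eqref{ve-2}.
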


\vskip0.1in
\noindent {\bf Bound on $I_3$.} 
We estimate this term using the CKN inequality.
Let $\ti\delta\in(0,1)$ and $0<\ve_0<\min\{\tf{A_{a,b}}{B_{a,b}},\f13\}$, and take 
\be\label{sigma-0} \ve_2=\frac14(1-3\ve_0),\qquad \sigma_0=\frac{2\ve_2}{q}. \ee
Since $\nm{r_3}_{q,\nu_1}\le\nm{r}_{q,\nu_1}\le 1$ by \eqref{con-1}, and
    \[\int_\CR V^{q-2}r_3^2\rd\nu_1\le\nm{V}_{q,\nu_1}^{q-2}\nm{r_3}_{q,\nu_1}^{2}=\nm{r_3}_{q,\nu_1}^{2},\]
\eqref{CKN-C-1} gives
\[\begin{aligned}
I_3&=(1-\theta\ve_0)\nm{r_3}_{\La,\nu_1}^2-\f2q(1+\ve_2\theta)\int_\CR r_3^q\rd\nu_1-\sigma_0\theta\int_\CR V^{q-2}r_3^2\rd\nu_1\\
&\ge (1-\theta\ve_0-\tf2q(1+\ve_2\theta)-\sigma_0\theta)\nm{r_3}_{q,\nu_1}^2\\
&=\f{\theta}{q}(1-q\ve_0-4\ve_2)\nm{r_3}_{q,\nu_1}^2\\
&\ge \f{\theta}{q}(1-3\ve_0-4\ve_2)\nm{r_3}_{q,\nu_1}^2\ge0,
\end{aligned}\]
where we used $q\le3$ for $d\ge6$.

\vskip0.1in
\noindent {\bf Bound on $I_2$.}
We use the cutoff $\mu_{K,0}$ and retain the energy of the low spectral modes.
\begin{lemma}\label{lem:I2}
For any $0<\ve_0<\min\{\tf{A_{a,b}}{B_{a,b}},\f13\}$, take $\ve_1,\ga$ as in \eqref{ve-1}, \eqref{ve-2}, and $\ve_2,\sigma_0$ as in \eqref{sigma-0}. There is a constant $\delta_2\in(0,1)$, independent of $d$, such that for all $d\ge6$, all $0<\ti\delta\le\delta_2$ and all $r$ as in Theorem \ref{thm:local}, one has $I_2\ge0$.
\end{lemma}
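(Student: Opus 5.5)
We follow the intermediate-range argument of \cite{Dolbeault etc=2025}, using the cutoff $\mu_{K,0}$ and Lemma \ref{lemma:large eigenfunction}. Write $\lambda:=q-1+(C_{\ve_1,\ve_2}+\sigma_0)\theta$ for the coefficient in $I_2$. Fix an integer $K\ge1$, depending only on $a,b$ and $\ve_0$, so large that
\[
\frac{(1-\theta\ve_0)\mu_{K,0}-\lambda}{\theta}\ \ge\ c_K>0
\]
uniformly in $d\ge6$. This should be possible because $\mu_{K,0}-(q-1)$ is at least of order $\theta K^2$: from \eqref{mu-ij}, $\mu_{K,0}\ge \frac{(q-2)^2}{8q}\bigl[(\eta_K)^2\cdot 4-1\bigr]$ grows like $\theta\,K(K+d-2)/(\La\theta)\cdot\theta$. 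The exact bookkeeping is routine but must be done with $\theta\asymp d^{-1}$. Then split $r_2=P r_2+(1-P)r_2$, where $P$ is the orthogonal projection (in both the energy and the weighted $L^2(V^{q-2}\rd\nu_1)$ inner products, which share eigenbases) onto the eigenspaces with $\mu<\mu_{K,0}$. By orthogonality,
\[
I_2\ge \bigl((1-\theta\ve_0)\mu_{K,0}-\lambda\bigr)\int V^{q-2}((1-P)r_2)^2\rd\nu_1-\lambda\int V^{q-2}(Pr_2)^2\rd\nu_1 ,
\]
where we drop the nonnegative low-mode energy only after using $\nm{Pr_2}^2_{\La,\nu_1}\ge\int V^{q-2}(Pr_2)^2$. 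Hence it suffices to show
\[
\int V^{q-2}(Pr_2)^2\rd\nu_1\le \frac{c_K\theta}{2\lambda}\int V^{q-2}r_2^2\rd\nu_1,
\]
i.e.\ the low-mode part of $r_2$ is a small fraction of its weighted mass.

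For the key estimate, decompose $P r_2=\sum_{\mu<\mu_{K,0}}P_\mu r_2$ over the at most $N_{K,a,b}$ distinct eigenvalues from \eqref{eq:cutoff-count}. For each $\mu$, with $v=P_\mu r_2/\|P_\mu r_2\|$, we have
\[
\int V^{q-2}(P_\mu r_2)^2=\Bigl(\int V^{q-2}v r_2\Bigr)^2\le \Bigl(\int V^{q-4}v^4\Bigr)^{1/2}\Bigl(\int_{\{r_2>0\}}V^q\Bigr)^{1/2}\int V^{q-2}r_2^2 ,
\]
by Hölder exactly as in the proof of Lemma \ref{lem:I1}. Lemma \ref{lemma:large eigenfunction} bounds the first factor by $D_{K,a,b}^{1/2}$, since $\int V^q\rd\nu_1=1$ and $v$ is normalized. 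Since $\{r_2>0\}\subset\{r>\ga V\}$, we get $\int_{\{r_2>0\}}V^q\rd\nu_1\le \ga^{-q}\int r_+^q\rd\nu_1\le\ga^{-q}\ti\delta^{q/2}$ by \eqref{con-1}. Summing,
\[
\int V^{q-2}(Pr_2)^2\le N_{K,a,b}D_{K,a,b}^{1/2}\ga^{-q/2}\ti\delta^{q/4}\int V^{q-2}r_2^2 .
\]

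\textbf{Main obstacle.} This bound has no factor $\theta$, whereas the right side must be of size $c_K\theta/(2\lambda)\sim d^{-1}$. A $\ti\delta$ independent of $d$ therefore cannot make it small enough directly. I would repair this by not discarding the energy gap: the coefficient $(1-\theta\ve_0)\mu_{K,0}-\lambda$ on the high modes is itself of order $\theta\mu$-gap. On the low modes I would keep the full coercivity $(1-\theta\ve_0)\mu-\lambda$, which is nonnegative except on $E_{\mu_1},E_{\mu_2}$. For $\mu\ge\mu_3$, Lemma \ref{lem:bound} gives coefficient $\ge\theta(A_{a,b}-B_{a,b}\ve_0-C\ve_1)$, which is nonnegative if $C_{\ve_1,\ve_2}+\sigma_0$ is compensated. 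Because $C_{\ve_1,\ve_2}\theta$ exceeds this in general, the deficient pieces are the projections onto $V,U$ and the finitely many low modes. These have coefficient $O(\theta)$, so the Hölder bound above times $\theta$, against the available high-mode gap plus the $\sigma_0\theta$ borrowed in $I_1$, closes once $N_{K,a,b}D_{K,a,b}^{1/2}\ga^{-q/2}\ti\delta^{q/4}$ is below a constant depending only on $a,b$. This defines $\delta_2$, independent of $d$ because $K,N_{K,a,b},D_{K,a,b},\ga,C_{\ve_1,\ve_2}$ are.
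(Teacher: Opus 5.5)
Your closing argument (the repair in your last paragraph) is correct and is essentially the paper's proof; the ``main obstacle'' is an artifact of your first display. There you discard $\nm{Pr_2}_{\La,\nu_1}^2$ entirely, despite your remark that you would first use $\nm{Pr_2}_{\La,\nu_1}^2\ge\int V^{q-2}(Pr_2)^2\,\rd\nu_1$. The low modes then get coefficient $-\lambda\approx-1$, and you are forced to ask for a low-mode fraction of size $\theta$, which a $d$-independent $\ti\delta$ cannot give.

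If you keep that inequality, i.e.\ use only that every eigenvalue is at least $1=(q-1)-\theta$, the low-mode coefficient is
\[
 (1-\theta\ve_0)-\lambda=-\theta\bigl(1+\ve_0+C_{\ve_1,\ve_2}+\sigma_0\bigr).
\]
This is already of order $\theta$, so a fixed fraction suffices. Write $Q_2:=\int V^{q-2}r_2^2\,\rd\nu_1$. The paper takes the fraction to be $1/2$: your H\"older/fourth-moment bound with $\ti\delta\le\delta_2$ gives $\int V^{q-2}(\Pi_Kr_2)^2\,\rd\nu_1\le Q_2/2$, hence $\nm{r_2}_{\La,\nu_1}^2\ge\bigl(1+\tfrac{\mu_{K,0}-1}{2}\bigr)Q_2$. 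It then closes with $(\mu_{K,0}-1)/\theta\ge\eta_K-\eta_0\ge K$ (from \eqref{mu-ij} and \eqref{eq:eta-lower}) and $K\ge3(1+\ve_0+C_{\ve_1,\ve_2}+\ve_2)$. Your repaired version is the same argument with the choices made in the other order: first $K$ with $c_K>0$, then the low-mode fraction below $c_K/(c_K+1+\ve_0+C_{\ve_1,\ve_2}+\sigma_0)$.

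Several statements in the repair should be fixed or removed.
\begin{itemize}
\item The claim that $(1-\theta\ve_0)\mu-\lambda\ge0$ for all $\mu\ge\mu_3$ is false in general, because $C_{\ve_1,\ve_2}$ is large compared with $A_{a,b}$, as you then concede. It is also not needed.
\item The ``$\sigma_0\theta$ borrowed in $I_1$'' plays no role. In $I_2$ the term $\sigma_0\theta Q_2$ is a cost already included in $\lambda$, not a resource.
\item Uniformly in $d$, the gap $\mu_{K,0}-1$ grows only like $K\theta$, not $K^2\theta$. This is harmless, since only growth in $K$ is used.
\item To make $\delta_2$ independent of $d$, you must remove the $q$-dependence in $\ga^{-q/2}\ti\delta^{q/4}=(\ti\delta/\ga^2)^{q/4}$. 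Requiring $\ti\delta\le\ga^2$ gives $(\ti\delta/\ga^2)^{q/4}\le(\ti\delta/\ga^2)^{1/2}$ for $q\ge2$. This is exactly what the paper's choice $\delta_2=\min\{1/2,\ga^2/(4N_{K,a,b}^2D_{K,a,b})\}$ achieves.
\end{itemize}
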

\begin{proof}
Choose an integer
\be\label{eq:I2-choice-K}
K\ge3\left(1+\ve_0+C_{\ve_1,\ve_2}+\ve_2\right).
\ee
This choice is independent of $d$. For $v\in E_\mu$ with $\mu\le\mu_{K,0}$, we claim that
\be\label{10-27-1}
\left|\int_\CR V^{q-2}vr_2\rd\nu_1\right|
\le\ga^{-\f q4}\ti\delta^{\f q8}D_{K,a,b}^{\f14}
\left(\int_\CR V^{q-2}v^2\rd\nu_1\right)^{\f12}
\left(\int_\CR V^{q-2}r_2^2\rd\nu_1\right)^{\f12}.
\ee
Indeed, H\"older's inequality and Lemma \ref{lemma:large eigenfunction}, with $\int_\CR V^q\rd\nu_1=1$, give
\[\begin{aligned}
\left|\int_\CR V^{q-2}vr_2\rd\nu_1\right|
&\le\left(\int_\CR V^{q-4}v^4\rd\nu_1\right)^{\f14}
\left(\int_\CR V^q1_{\{r_2>0\}}\rd\nu_1\right)^{\f14}
\left(\int_\CR V^{q-2}r_2^2\rd\nu_1\right)^{\f12}\\
&\le D_{K,a,b}^{\f14}
\left(\int_\CR V^{q-2}v^2\rd\nu_1\right)^{\f12}
\left(\int_\CR V^q1_{\{r_2>0\}}\rd\nu_1\right)^{\f14}
\left(\int_\CR V^{q-2}r_2^2\rd\nu_1\right)^{\f12}.
\end{aligned}\]
Since $\{r_2>0\}\subset\{r>\ga V\}$,
\[\int_\CR V^q1_{\{r_2>0\}}\rd\nu_1
\le\ga^{-q}\int_\CR |r|^q\rd\nu_1
\le\ga^{-q}\ti\delta^{q/2},\]
which proves \eqref{10-27-1}.

Let $\Pi_\mu$ be the orthogonal projection onto $E_\mu$ in $L^2(\CR,V^{q-2}\rd\nu_1)$, and set
\[\Pi_Kr_2:=\sum_{\mu\le\mu_{K,0}}\Pi_\mu r_2,
\qquad Q_2:=\int_\CR V^{q-2}r_2^2\rd\nu_1.\]
Applying \eqref{10-27-1} to $v=\Pi_\mu r_2$, using orthogonality and the count \eqref{eq:cutoff-count}, yields
\[\begin{aligned}
\int_\CR V^{q-2}(\Pi_Kr_2)^2\rd\nu_1
&=\sum_{\mu\le\mu_{K,0}}\int_\CR V^{q-2}(\Pi_\mu r_2)^2\rd\nu_1\\
&\le N_{K,a,b}\ga^{-q/2}\ti\delta^{q/4}D_{K,a,b}^{1/2}Q_2.
\end{aligned}\]
Take
\be\label{delta-2}
\delta_2:=\min\left\{\frac12,\frac{\ga^2}{4N_{K,a,b}^2D_{K,a,b}}\right\}.
\ee
Since $q\ge2$ and $\ti\delta/\ga^2\le1/(4N_{K,a,b}^2D_{K,a,b})\le1$, we obtain
\[N_{K,a,b}D_{K,a,b}^{1/2}
\left(\frac{\ti\delta}{\ga^2}\right)^{q/4}
\le N_{K,a,b}D_{K,a,b}^{1/2}
\left(\frac{\ti\delta}{\ga^2}\right)^{1/2}\le\frac12.\]
Thus $\int_\CR V^{q-2}(\Pi_Kr_2)^2\rd\nu_1\le Q_2/2$.

All eigenvalues are at least $1$, and the projection is also orthogonal for the energy inner product. Retaining both parts of the energy therefore gives
\[\begin{aligned}
\nm{r_2}_{\La,\nu_1}^2
&\ge\int_\CR V^{q-2}(\Pi_Kr_2)^2\rd\nu_1
+\mu_{K,0}\int_\CR V^{q-2}(r_2-\Pi_Kr_2)^2\rd\nu_1\\
&=Q_2+(\mu_{K,0}-1)
\left(Q_2-\int_\CR V^{q-2}(\Pi_Kr_2)^2\rd\nu_1\right)\\
&\ge\left(1+\frac{\mu_{K,0}-1}{2}\right)Q_2.
\end{aligned}\]
By \eqref{eq:eta-lower}, $\eta_K-\eta_0>K\ge1$. Formula \eqref{mu-ij} consequently gives
\[\begin{aligned}
\frac{\mu_{K,0}-1}{q-2}
&=\frac{(\eta_K-\eta_0)\left[4+(q-2)(\eta_K-\eta_0+1)\right]}{2q}\ge\eta_K-\eta_0\ge K.
\end{aligned}\]
Using $0<\theta=q-2<1$, $\ve_0<1/3$ and $\sigma_0=2\ve_2/q\le\ve_2$, we conclude that
\[\begin{aligned}
I_2
&\ge\theta\left[\frac{1-\theta\ve_0}{2}
\frac{\mu_{K,0}-1}{q-2}
-(1+\ve_0+C_{\ve_1,\ve_2}+\sigma_0)\right]Q_2\\
&\ge\theta\left[\frac K3
-(1+\ve_0+C_{\ve_1,\ve_2}+\ve_2)\right]Q_2\ge0
\end{aligned}\]
by \eqref{eq:I2-choice-K}. The constants $K$, $M_K$, $D_{K,a,b}$, $N_{K,a,b}$ and $\delta_2$ depend only on the fixed parameters and are independent of $d$.
\end{proof}

\subsection{Completion of the proof}
\begin{proof}[Proof of Theorem \ref{thm:local}]
We follow the high-dimensional truncation and finite-dimensional perturbation strategy of \cite[Section 2]{Dolbeault etc=2025}.

\noindent\textit{A fixed-dimensional estimate.}
For each fixed admissible dimension, the orthogonality conditions and the eigenvalue equations give
\[\int_\CR V^{q-1}r\rd\nu_1=0,\qquad
\int_\CR V^{q-2}r^2\rd\nu_1\le\frac{\nm{r}_{\La,\nu_1}^2}{\mu_3}.\]
Define
\[L(q):=\begin{cases}
\displaystyle\frac2q,&2<q\le3,\\[1mm]
\displaystyle\frac23(q-1)(q-2)\max\{1,2^{q-4}\},&q>3.
\end{cases}\]
We claim that, whenever \eqref{con-1} holds with $\ti\delta\le1$,
\be\label{eq:fixed-dimension-deficit}
\nm{V+r}_{\La,\nu_1}^2-\nm{V+r}_{q,\nu_1}^2
\ge\left(1-\frac{q-1}{\mu_3}
-L(q)\ti\delta^{\min\{q-2,1\}/2}\right)\nm{r}_{\La,\nu_1}^2.
\ee
For $2<q\le3$ and $t\ge-1$, use
\[(1+t)^q\le1+qt+\frac{q(q-1)}2t^2+t_+^q.\]
For $q>3$, Taylor's integral formula gives
\[(1+t)^q\le1+qt+\frac{q(q-1)}2t^2
+\frac{q(q-1)(q-2)}6\max\{1,2^{q-4}\}\bigl(|t|^3+|t|^q\bigr).\]
Indeed, for $-1\le t\le0$ the third-order remainder is nonpositive, while for $t\ge0$ it is bounded by
\[\frac{q(q-1)(q-2)}6t^3(1+t)^{q-3},\qquad
(1+t)^{q-3}\le\max\{1,2^{q-4}\}(1+t^{q-3}).\]
Apply these estimates with $t=r/V$, multiply by $V^q$, and integrate. 
For $2\le p\le q$, weighted H\"older and \eqref{CKN-C-1} give
\[\int_\CR V^{q-p}|r|^p\rd\nu_1
\le\nm{r}_{q,\nu_1}^p
\le\ti\delta^{(p-2)/2}\nm{r}_{\La,\nu_1}^2,\]
where $\int_\CR V^q\rd\nu_1=1$ is used. Concavity of $x^{2/q}$ then proves \eqref{eq:fixed-dimension-deficit}. For $q>3$ and $\ti\delta\le1$, both remainder terms are bounded by a constant times
\[\ti\delta^{1/2}\nm{r}_{\La,\nu_1}^2.\]

\noindent\textit{Constants for the finitely many small dimensions.}
Let $\mathcal D_{\mathrm{low}}(a,b)$ be the set of integers $m\in\{2,3,4,5\}$ for which the fixed pair $a,b$ satisfies \eqref{case-1}. Write $q_m$ and $\mu_3(m)$ for the corresponding parameters in dimension $m$, and set
\[\kappa_m:=1-\frac{q_m-1}{\mu_3(m)}.\]
Define
\[\begin{aligned}
\ve_{\mathrm{low}}
&:=\min\left(\{1\}\cup\left\{\frac{\kappa_m}{2(q_m-2)}:m\in\mathcal D_{\mathrm{low}}(a,b)\right\}\right),\\
\delta_{\mathrm{low}}
&:=\min\left(\left\{\frac12\right\}\cup
\left\{\left(\frac{\kappa_m}{2L(q_m)}\right)^{2/\min\{q_m-2,1\}}:m\in\mathcal D_{\mathrm{low}}(a,b)\right\}\right).
\end{aligned}\]
Both constants are positive and depend only on $a,b$; these definitions also cover the case when the set of small admissible dimensions is empty.

\noindent\textit{A common choice of parameters.}
Choose
\be\label{eq:local-epsilon-choice}
\ve_0:=\frac12\min\left\{\frac13,\frac{A_{a,b}}{B_{a,b}},\ve_{\mathrm{low}}\right\}.
\ee
Choose $\ve_1,C_0$ according to \eqref{ve-1}, choose $\ve_2$ according to \eqref{sigma-0}, and take $\ga,M,C_{\ve_1,\ve_2}$ from \eqref{ve-2}. Thus the constant in the truncation estimate is given explicitly by Proposition \ref{lemma-1}. Set
\[K:=\left\lceil3\bigl(1+\ve_0+C_{\ve_1,\ve_2}+\ve_2\bigr)\right\rceil,\]
and use $D_{K,a,b}$ and $N_{K,a,b}$ from \eqref{eq:DK} and \eqref{eq:cutoff-count}. Finally, define
\be\label{eq:local-delta-choice}
\ti\delta:=\min\left\{\delta_{\mathrm{low}},
\frac{4\ve_1\ve_2\ga^2}{3C_0^2},
\frac{\ga^2}{4N_{K,a,b}^2D_{K,a,b}}\right\}.
\ee
All these constants are positive and depend only on the fixed pair $a,b$.

For $d\ge6$, we have $2<q<3$ and take $\sigma_0$ from \eqref{sigma-0}. In particular,
\[\sigma_0\ge\frac{2\ve_2}{3}
\ge\frac{C_0^2\ti\delta}{2\ve_1\ga^2},\]
so Lemma \ref{lem:I1} gives $I_1\ge0$. The choice of $K$ and the last term in \eqref{eq:local-delta-choice} give $I_2\ge0$ by Lemma \ref{lem:I2}. The estimate for $I_3$ applies because $\ti\delta\le1$ and $\ve_0<1/3$. Also $\ve_0<1/3\le1/q$, so $I_4,I_5\ge0$. Substitution into \eqref{eq:deficit-decomposition} proves the asserted inequality in every admissible dimension $d\ge6$.

If $d=m\in\mathcal D_{\mathrm{low}}(a,b)$, then \eqref{eq:fixed-dimension-deficit} and $\ti\delta\le\delta_{\mathrm{low}}$ imply
\[\nm{V+r}_{\La,\nu_1}^2-\nm{V+r}_{q,\nu_1}^2
\ge\frac{\kappa_m}{2}\nm{r}_{\La,\nu_1}^2
\ge(q_m-2)\ve_0\nm{r}_{\La,\nu_1}^2\]
by \eqref{eq:local-epsilon-choice}. This treats all remaining admissible dimensions and completes the proof.
\end{proof}

\section{From local to global stability}\label{sec:global}
Throughout this section we assume \eqref{ourcase}. In particular,
\[
2<q<2^*,\qquad 0<\La<\La_{FS},\qquad q':=\frac q{q-1}.
\]
All norms and inner products use the original cylinder measure $\nu$. Write
\[\begin{aligned}
E_\La(u)&:=\nm{u}_\La^2,\quad D_{\La,q}(u):=E_\La(u)-S_{\La,q}\nm{u}_q^2,\quad \rho(u):=\frac{\dist_\La(u,\MR)^2}{E_\La(u)}\quad(u\ne0).
\end{aligned}\]
Since $0\in\MR$, we have $0\le\rho(u)\le1$. Define
\[\begin{aligned}
c_{\La,q}^{+}&:=\inf_{\substack{u\ge0\\u\notin\MR}}
\frac{D_{\La,q}(u)}{\dist_\La(u,\MR)^2},\\
I_{\La,q}(\delta)&:=\inf_{\substack{u\ge0,\ u\notin\MR\\\rho(u)\le\delta}}
\frac{D_{\La,q}(u)}{\dist_\La(u,\MR)^2},\qquad0<\delta<1.
\end{aligned}\]
We follow the local-to-global strategy of \cite{Dolbeault etc=2025}. The path on the cylinder combines axial continuous Steiner symmetrization with normalized inverse iteration. We give the construction below to track its effect on the constants.

\subsection{The relative neighborhood of the optimizer manifold}
\begin{lemma}\label{lem:nearest-relative}
Every nonzero nonnegative $u\in H^1(\CR)$ has a nearest optimizer
\[
g=cV(\cdot-\tau),\qquad c>0.
\]
If $v=u-g$, then
\be\label{eq:nearest-orthogonality}
\abr{v,g}_\La=\abr{v,\partial_tg}_\La=0,\qquad
E_\La(u)=E_\La(g)+E_\La(v).
\ee
Let $\ve_0,\ti\delta$ be the constants of Theorem \ref{thm:local}, and set
\be\label{eq:relative-radius}
\delta_0:=\frac{\ti\delta}{1+\ti\delta}.
\ee
Then
\be\label{eq:local-relative-coefficient}
I_{\La,q}(\delta_0)\ge(q-2)\ve_0.
\ee
\end{lemma}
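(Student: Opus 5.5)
The plan has three steps: existence of a nearest optimizer, the orthogonality relations, and a reduction to Theorem \ref{thm:local} by rescaling and translating.

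\emph{Existence.} Minimize $\Phi(c,\tau):=\nm{u-cV(\cdot-\tau)}_\La^2$ over $\R^2$. For fixed $\tau$ this is a quadratic in $c$, minimized at $c(\tau)=\abr{u,V(\cdot-\tau)}_\La/\nm V_\La^2$. Hence
\[\inf\Phi=E_\La(u)-\sup_\tau \abr{u,V(\cdot-\tau)}_\La^2/\nm V_\La^2 .\]
By \eqref{equ-C}, $\abr{u,V(\cdot-\tau)}_\La=\int_\CR uV^{q-1}(\cdot-\tau)\rd\nu$. This is a continuous function of $\tau$, and it tends to $0$ as $|\tau|\to\infty$: $V^{q-1}$ decays exponentially and $u\in L^2$, so dominated convergence and density apply. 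Since $u\ge0$ and $u\ne0$, it is strictly positive. So the supremum is attained at some finite $\tau$, and the corresponding $c$ is positive.

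\emph{Orthogonality.} Differentiate $\Phi$ at the minimizer, first in $c$ and then in $\tau$. This gives $\abr{u-g,V(\cdot-\tau)}_\La=0$ and $c\abr{u-g,V'(\cdot-\tau)}_\La=0$. Since $c>0$, these are exactly $\abr{v,g}_\La=\abr{v,\pa_tg}_\La=0$. Pythagoras then gives $E_\La(u)=E_\La(g)+E_\La(v)$.

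\emph{Local coefficient.} Take $u\ge0$ with $u\notin\MR$ and $\rho(u)\le\delta_0$. The deficit and the distance are invariant under axial translation, and the ratio $D_{\La,q}/\dist_\La^2$ is invariant under multiplication by positive constants. So we may assume $g=V$, i.e.\ $u=V+r$ with $r=v$, and $r\ge-V$ because $u\ge0$. Then \eqref{con-2} holds in the $\nu_1$ inner products, since these are positive multiples of the $\La$ ones and $U=-V'$.

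For \eqref{con-1}, I use $\rho(u)=E(v)/(E(g)+E(v))\le\delta_0$. With \eqref{eq:relative-radius} this gives $\nm r_{\La,\nu_1}^2\le\ti\delta\nm V_{\La,\nu_1}^2=\ti\delta$. Then \eqref{CKN-C-1} gives $\nm r_{q,\nu_1}^2\le\nm r_{\La,\nu_1}^2\le\ti\delta$.

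Theorem \ref{thm:local} now yields deficit $\ge\theta\ve_0\nm r_{\La,\nu_1}^2$ in the $\nu_1$ normalization. Dividing the $\nu$-versions of numerator and denominator by the same factor $\nm V_q^q$, and using $S_{\La,q}=\nm V_q^{q-2}$, converts this back. Since $\dist_\La(u,\MR)^2=E_\La(v)$, the ratio is $\ge(q-2)\ve_0$. Taking the infimum gives \eqref{eq:local-relative-coefficient}.

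The main point needing care is the decay of $\tau\mapsto\abr{u,V(\cdot-\tau)}_\La$, which guarantees that the minimizer exists. Positivity of $c$, and hence the sign condition $r\ge-V$, relies on $u\ge0$.
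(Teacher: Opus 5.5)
Your proposal is correct and follows essentially the same route as the paper. You maximize the positive, continuous, decaying correlation $\tau\mapsto\int_{\mathcal C}uV^{q-1}(\cdot-\tau)\,\mathrm d\nu$, differentiate in amplitude and translation to get the orthogonality, and then translate and rescale to $r$ with $\|r\|_{\Lambda,\nu_1}^2=\rho(u)/(1-\rho(u))\le\tilde\delta$ before applying Theorem~\ref{thm:local}. The only difference is cosmetic: you justify the decay through $L^2$ duality (density plus exponential decay of $V^{q-1}$), whereas the paper approximates in $L^q$ and $L^{q'}$ by functions with compact axial support.
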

\begin{proof}
Write $V_\tau=V(\cdot-\tau)$. The equation for $V_\tau$ gives
\[\abr{u,V_\tau}_\La=\int_\CR uV_\tau^{q-1}\rd\nu>0.\]
This correlation is continuous in $\tau$ and tends to zero as $|\tau|\to\infty$. This follows by approximating the two factors in $L^q$ and $L^{q'}$ by functions with compact axial support. For fixed $\tau$, minimization over $c\in\R$ gives
\[c=\frac{\abr{u,V_\tau}_\La}{E_\La(V)},\qquad
\inf_cE_\La(u-cV_\tau)=E_\La(u)-\frac{\abr{u,V_\tau}_\La^2}{E_\La(V)}.\]
The positive maximum of the correlation is attained, so a nearest optimizer exists and has positive amplitude. Differentiation in the amplitude and translation parameters proves \eqref{eq:nearest-orthogonality}.

Suppose $\rho(u)\le\delta_0$. Translate and rescale by this nearest optimizer, and write
\[r(t,\omega)=c^{-1}u(t+\tau,\omega)-V(t).\]
Then $r\ge-V$ and the orthogonality conditions \eqref{con-2} hold. Since $E_\La(V)=\nm{V}_q^q$, the normalized energy of $r$ is
\[\nm{r}_{\La,\nu_1}^2
=\frac{E_\La(v)}{E_\La(g)}
=\frac{\rho(u)}{1-\rho(u)}\le\ti\delta.\]
By \eqref{CKN-C-1}, $\nm{r}_{q,\nu_1}^2\le\ti\delta$ as well. Theorem \ref{thm:local} therefore applies. Returning to $u$, both its deficit and its squared distance acquire the same factor:
\[\begin{aligned}
D_{\La,q}(u)
&=c^2\nm{V}_q^q\bigl(\nm{V+r}_{\La,\nu_1}^2-\nm{V+r}_{q,\nu_1}^2\bigr),\\
\dist_\La(u,\MR)^2&=c^2\nm{V}_q^q\nm{r}_{\La,\nu_1}^2.
\end{aligned}\]
Thus
\[
D_{\La,q}(u)\ge(q-2)\ve_0\dist_\La(u,\MR)^2,
\]
which proves \eqref{eq:local-relative-coefficient}.
\end{proof}

\subsection{An energy-controlled path}
The next lemma combines continuous rearrangement with normalized inverse iteration.

\begin{lemma}\label{lem:energy-path}
Assume $2<q<2^*$ and $0<\La<\La_{FS}$. For every nonnegative $u\in H^1(\CR)$ with $\nm{u}_q=1$, there is a continuous map $\Gamma:[0,1]\to H^1(\CR)$ such that
\[\Gamma(0)=u,\qquad \Gamma(1)=\frac{V}{\nm{V}_q},\qquad
\Gamma(\tau)\ge0,\qquad \nm{\Gamma(\tau)}_q=1,\qquad
E_\La(\Gamma(\tau))\le E_\La(u).\]
\end{lemma}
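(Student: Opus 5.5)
We build $\Gamma$ by concatenating two continuous pieces. The first is a continuous axial Steiner symmetrization that moves $u$ to its even, axially decreasing rearrangement $u^\star$. The second is a normalized inverse iteration that runs from $u^\star$ to $V/\nm V_q$ inside the cone of even, axially nonincreasing, nonnegative functions. Each piece must stay $L^q$-normalized, stay nonnegative, and not increase $E_\La$.

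\textbf{Step 1: symmetrization.} On each fiber $\{t\mapsto u(t,\om)\}$ apply Brock's continuous Steiner symmetrization in the $t$ variable, simultaneously for all $\om\in\S^{d-1}$ \cite{Brock=1995}. This produces a family $u^\lambda$, $\lambda\in[0,\iy]$, with $u^0=u$ and $u^\iy=u^\star$. Equimeasurability on each fiber preserves $\nm{u^\lambda}_q$, $\nm{u^\lambda}_2$ and nonnegativity. The P\'olya--Szeg\H{o} property of continuous Steiner symmetrization gives $\int|\pa_tu^\lambda|^2\le\int|\pa_tu|^2$. For the angular gradient we use the fiberwise contraction property $\nm{u^\lambda(\cdot,\om)-u^\lambda(\cdot,\om')}_{L^2(\R)}\le\nm{u(\cdot,\om)-u(\cdot,\om')}_{L^2(\R)}$. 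Dividing by $|\om-\om'|$ and letting $\om'\to\om$ yields $\int|\nabla_\om u^\lambda|^2\le\int|\nabla_\om u|^2$. Continuity of $\lambda\mapsto u^\lambda$ in $L^q$ is standard. Continuity in $H^1$ is more delicate, and we plan to sidestep it by performing a small regularization before symmetrizing: replace $u$ by $\ti u=(\La-\pa_t^2-\Delta_\om)^{-1}$ applied to $u^{q-1}$, then normalize. By the inverse map estimate below, this step is itself an energy-decreasing, $H^1$-continuous interpolation. We also note that the end-to-end statement only requires $H^1$-continuity of $\Gamma$. If Brock's flow fails to be $H^1$-continuous, the fallback is to approximate by finitely many polarizations \cite{Brock-Solynin=2000}, each joined by a linear interpolation. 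Along such an interpolation the energy is convex, and the $L^q$ norm is renormalized using $\nm{\cdot}_q\ge$ a fixed positive lower bound along the segment.

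\textbf{Step 2: normalized inverse map.} Define $T(w):=G(w)/\nm{G(w)}_q$ with $G(w):=(-\pa_t^2-\Delta_\om+\La)^{-1}w^{q-1}$. This map preserves positivity, evenness and axial monotonicity, since the resolvent kernel is positive and symmetric-decreasing in $t$. Write $w=w_0$ with $\nm w_q=1$ and $g=G(w)$. Testing $-\Delta g+\La g=w^{q-1}$ against $g$ and against $w$ gives
\[
E_\La(g)=\int w^{q-1}g\le\nm g_q,\qquad \int w^{q-1}g=\abr{g,w}_\La\le E_\La(g)^{1/2}E_\La(w)^{1/2}.
\]
Together these imply $E_\La(T w)\le E_\La(w)$. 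The segment $(1-s)w+s\,g/\nm g_q$, renormalized in $L^q$, has energy at most $E_\La(w)$: the unnormalized energy is convex, and one checks $\nm{(1-s)w+sTw}_q\ge1$ using $\int w^{q-1}Tw\ge1$ (equivalently $\nm g_q\le\int w^{q-1}g$ after normalization) together with convexity of the $L^q$ norm. Concatenating these segments for the iterates $w_k=T^kw_0$, placed on time intervals $[1-2^{-k},1-2^{-k-1}]$, gives a continuous path. The energies $E_\La(w_k)$ are nonincreasing and bounded, and $T$ gains regularity. By subcritical compactness in the even, axially decreasing cone (from Strauss-type decay in $t$, uniform in the cone, plus $2<q<2^*$), the iterates converge along subsequences in $H^1$ to a fixed point. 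That fixed point is a positive solution of $-\Delta w+\La w=\kappa w^{q-1}$.

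\textbf{Main obstacle: endpoint identification and continuity at $\tau=1$.} We need the whole sequence to converge to $V/\nm V_q$, not just a subsequence. The plan is to invoke the rigidity theorem of \cite{Dolbeault etc=2016}: for $\La<\La_{FS}$, every positive $H^1$ solution is $t$-dependent only and, being even, equals a multiple of $V$ up to translation. Evenness pins the translation to $0$, and the normalization fixes the multiple. Uniqueness of the possible limit then upgrades subsequential convergence to convergence of the whole path as $\tau\to1$, giving continuity at $\tau=1$ with $\Gamma(1)=V/\nm V_q$. One further point to rule out is that the iterates do not tend to $0$ or leak mass to $|t|\to\iy$. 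Since $\nm{w_k}_q=1$ and the functions are decreasing in $|t|$, mass cannot escape to infinity within the cone, and the limit therefore keeps $L^q$ norm $1$.
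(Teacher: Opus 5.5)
Your overall architecture matches the paper's: axial continuous Steiner symmetrization into the even, axially nonincreasing cone $\mathcal K$, then normalized inverse iteration joined by interpolating segments, compactness in $\mathcal K$, and endpoint rigidity. Step~1, however, has a genuine gap. You never obtain an $H^1$-continuous symmetrization piece, and neither of your remedies supplies one.

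Regularizing the datum first does not help. For $\lambda\mapsto\ti u^\lambda$ you still only know continuity in $L^p$, plus weak continuity in $H^1$ from the energy bound, whatever the smoothness of $\ti u$.

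The polarization fallback breaks the energy constraint. By the equality case of Minkowski, $\nm{(1-s)w+sw^H}_q<1$ for $0<s<1$ unless $w^H=w$, so renormalizing can push the quotient above the convexity bound $E_\La(w)$. Concretely, take $u=V(\cdot-t_0)/\nm V_q$ with $t_0\ne0$. Then $E_\La(u)=S_{\La,q}$, so every point of $\Gamma$ must lie in $\MR$. But the open segment from $u$ to any polarization $u^H\ne u$ (a reflected bubble) consists, after normalization, of two-bubble sums outside $\MR$. Finitely many polarizations also do not land in $\mathcal K$, which is where your compactness argument needs the iteration to start.

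The missing idea is to apply the inverse map \emph{after} the rearrangement. With $\mathcal A=-\pa_t^2-\Delta_\om+\La$, the map $T:L^q_+\setminus\{0\}\to H^1$ is continuous, because $w\mapsto w^{q-1}$ is continuous into $L^{q'}$ and $\mathcal A^{-1}:L^{q'}\to H^1$ is bounded. Hence $\lambda\mapsto T(u^\lambda)$ is $H^1$-continuous on $[0,\infty]$, with $E_\La(T(u^\lambda))\le E_\La(u^\lambda)\le E_\La(u)$. It starts at $Tu$, which one inverse-step segment joins to $u$. It ends at $T(u^\star)\in\mathcal K$, because $T$ preserves $\mathcal K$.

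Step~2 contains two incorrect computations, although the conclusions survive.

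First, testing $\mathcal Ag=w^{q-1}$ against $w$ gives $\abr{g,w}_\La=\int w^q\rd\nu=1$, not $\int w^{q-1}g\rd\nu$; the latter equals $E_\La(g)$. With this correction, Cauchy--Schwarz yields $E_\La(g)E_\La(w)\ge1$, hence $E_\La(Tw)\le1/E_\La(g)\le E_\La(w)$.

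Second, your segment check is false. H\"older gives $\int w^{q-1}Tw\rd\nu=E_\La(g)/\nm g_q\le1$, and $\nm{(1-s)w+sTw}_q<1$ for $0<s<1$ unless $Tw=w$. The energy bound on the normalized segment still holds. For $\alpha,\beta\ge0$ and $e=E_\La(w)$,
\[
E_\La(\alpha w+\beta g)=\alpha^2e+2\alpha\beta+\beta^2E_\La(g)\le e\bigl(\alpha+\beta E_\La(g)\bigr)^2\le e\nm{\alpha w+\beta g}_q^2,
\]
using $eE_\La(g)\ge1$ and $\nm{\alpha w+\beta g}_q\ge\int w^{q-1}(\alpha w+\beta g)\rd\nu$. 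The paper parametrizes the same normalized path as $(1-\tau)w+\tau eg$.

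Finally, monotone energies and compactness alone do not show that subsequential limits are fixed points. You need the asymptotic regularity estimate
\[
\nm{w_k-e_kg_k}_\La^2=e_k\bigl(e_kE_\La(g_k)-1\bigr)\le e_k\Bigl(\frac{e_k}{e_{k+1}}-1\Bigr)\longrightarrow0,
\]
with $e_k=E_\La(w_k)$ and $g_k=\mathcal A^{-1}(w_k^{q-1})$. Combined with continuity of $\mathcal A^{-1}$, this upgrades subsequential $L^q$ convergence to $H^1$ convergence and gives the limit equation $\mathcal Av=e_\infty v^{q-1}$.
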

\begin{proof}
\noindent\textit{Inverse steps and their continuous connections.}
Let
\[
\mathcal A:=-\partial_t^2-\Delta_\omega+\La.
\]
The sharp inequality and the Lax--Milgram theorem give
\[\nm{\mathcal A^{-1}f}_\La\le S_{\La,q}^{-1/2}\nm{f}_{q'},\]
and testing with the negative part shows that $\mathcal A^{-1}$ preserves nonnegativity. For $u\ge0$, $\nm{u}_q=1$, put
\be\label{eq:inverse-map}
z:=\mathcal A^{-1}(u^{q-1}),\qquad e:=E_\La(u),\qquad
T(u):=\frac{z}{\nm{z}_q}.
\ee
Then
\[
\abr{u,z}_\La=1,\qquad 1\le eE_\La(z),\qquad
E_\La(z)=\int_\CR u^{q-1}z\rd\nu\le\nm{z}_q.
\]
Consequently,
\be\label{10-29-1} E_\La(Tu)\le\frac1{E_\La(z)}\le E_\La(u).\ee
Moreover, $T:L^q_+\setminus\{0\}\to H^1(\CR)$ is continuous. 
This follows from continuity of $u\mapsto u^{q-1}$ from the nonnegative cone $L^q_+$ to $L^{q'}$, the inverse bound, and the embedding $H^1\hookrightarrow L^q$.
Set
\[
B:=eE_\La(z)-1\ge0,
\]
and define
\[v_\tau:=(1-\tau)u+\tau ez,\qquad \gamma_u(\tau):=\frac{v_\tau}{\nm{v_\tau}_q},\qquad 0\le \tau\le1.\]
The path $\gamma_u$ joins $u$ to $Tu$. Direct calculation and H\"older's inequality give
\[E_\La(v_\tau)=e(1+\tau^2B),\qquad
\nm{v_\tau}_q\ge\int_\CR u^{q-1}v_\tau\rd\nu=1+\tau B.\]
Thus its denominator never vanishes and
\[E_\La(\gamma_u(\tau))\le e\frac{1+\tau^2B}{(1+\tau B)^2}\le e.\]

\noindent\textit{Entering a compact symmetry cone.}
Let $R_\tau$ be continuous Steiner symmetrization in $t$, applied on each fiber $\mathbb R\times\{\omega\}$, with $R_0u=u$ and $R_\infty u$ even and decreasing in $|t|$. The one-dimensional rearrangement properties give preservation of $L^p$ norms, $L^p$ contraction, and $L^p$ continuity on $[0,\infty]$ for finite $p$; see \cite{Dolbeault etc=2025}. Fubini gives the axial derivative estimate. For a one-parameter angular rotation $J_h$, commutation with $R_\tau$ and $L^2$ contraction imply
\[\left\|\frac{J_hR_\tau u-R_\tau u}{h}\right\|_2
\le\left\|\frac{J_hu-u}{h}\right\|_2.\]
Passing to the standard rotational Killing fields $X_{ij}$ and using
\[
\sum_{i<j}|X_{ij}w|^2=|\nabla_\omega w|^2
\]
yields
\[\nm{R_\tau u}_2=\nm{u}_2,\qquad
\nm{\partial_tR_\tau u}_2\le\nm{\partial_tu}_2,\qquad
\nm{\nabla_\omega R_\tau u}_2\le\nm{\nabla_\omega u}_2.\]
In particular,
\[
E_\La(R_\tau u)\le E_\La(u).
\]
The $L^q$ continuity follows by dominated convergence on the fibers. Therefore $\tau\mapsto T(R_\tau u)$ is continuous in $H^1$. Preceding this path with $\gamma_u$ joins $u$ to
\[
u_0=T(R_\infty u),
\]
with unit $L^q$ norm and energy bounded by $E_\La(u)$.

Let $\mathcal K$ denote the closed cone of nonnegative functions which are even and nonincreasing in $|t|$ on almost every fiber. The heat representation
\[\mathcal A^{-1}f=\int_0^\infty e^{-\La \tau}
e^{\tau\partial_t^2}e^{\tau\Delta_\omega}f\rd \tau\]
shows that $T$ preserves $\mathcal K$. Indeed, the angular heat operator is a positive average, and convolution in $t$ with the Gaussian preserves even decreasing nonnegative functions. For $f\in L^{q'}$, truncation in value and axial support followed by the inverse bound justifies the representation and passage to the limit. Thus $u_0\in\mathcal K$.
Energy-bounded subsets of $\mathcal K$ are precompact in $L^q$. To see this, evenness and monotonicity give
\[2|t|\int_{\mathbb S^{d-1}}w(t,\omega)^2\rd\omega\le\nm{w}_2^2.\]
For the slabs
\[
Q_k=(k,k+1)\times\S^{d-1},\qquad
Q_k'=(k-1,k+2)\times\S^{d-1},
\]
it follows that
\[
\sup_{|k|\ge R}\int_{Q_k}w^2\rd\nu\le\frac CR
\]
uniformly on any energy-bounded set. If $d\ge3$, the local Sobolev inequality and H\"older's inequality imply
\[\int_{Q_k}|w|^{2+4/d}\rd\nu
\le C\left(\int_{Q_k}w^2\rd\nu\right)^{2/d}
\nm{w}_{H^1(Q_k')}^2.\]
The enlarged slabs have bounded overlap, so the $L^{2+4/d}$ tails vanish uniformly. Interpolation with the bounded $L^2$ and $L^{2^*}$ norms gives uniformly vanishing $L^q$ tails for every $2<q<2^*$. If $d=2$, choose $P>\max\{q,2\}$. The analogous estimate is
\[\int_{Q_k}|w|^{4-4/P}\rd\nu
\le C\left(\int_{Q_k}w^2\rd\nu\right)^{1-2/P}
\nm{w}_{H^1(Q_k')}^2,\]
and interpolation gives the same conclusion. 
Rellich compactness on bounded cylinders completes the proof of the claim. 

\noindent\textit{Convergence and the terminal point of the path.}
Starting from $u_0$, define
\[
u_{n+1}=T(u_n),\qquad e_n=E_\La(u_n),\qquad
z_n=\mathcal A^{-1}(u_n^{q-1}).
\]
Then
\[
u_n\in\mathcal K,\qquad \nm{u_n}_q=1,\qquad
e_n\downarrow e_\infty\ge S_{\La,q}>0.
\]
The inequality \eqref{10-29-1} gives 
\[\frac1{e_n}\le E_\La(z_n)\le\frac1{e_{n+1}},\qquad
\nm{u_n-e_nz_n}_\La^2
\le e_n\left(\frac{e_n}{e_{n+1}}-1\right)\longrightarrow0.\]
Cone compactness gives strongly convergent $L^q$ subsequences. Continuity of $\mathcal A^{-1}$ and the last estimate upgrade their convergence to $H^1$. Every limit $v\in\mathcal K$ has $\nm{v}_q=1$ and satisfies
\[\mathcal Av=e_\infty v^{q-1}.\]
Elliptic regularity and the strong maximum principle give $v>0$. The rigidity of all positive finite-energy solutions, after amplitude rescaling, follows from \cite[Corollary 1.3]{Dolbeault etc=2016}; hence $v$ is a translate and multiple of $V$. Membership in $\mathcal K$ fixes the translation at zero and the $L^q$ normalization fixes the amplitude. Thus every cluster point is $V/\nm{V}_q$, and the entire sequence converges to it in $H^1$.

Concatenate the paths $\gamma_{u_n}$ on successive intervals with geometrically decreasing lengths. Their energies are at most $e_n\le e_0$. Their unnormalized points obey
\[\sup_{0\le \tau\le1}\left\|(1-\tau)u_n+\tau e_nz_n-\frac{V}{\nm{V}_q}\right\|_\La
\le\left\|u_n-\frac{V}{\nm{V}_q}\right\|_\La
+\nm{e_nz_n-u_n}_\La\longrightarrow0.\]
Their $L^q$ norms therefore converge uniformly to one, which proves continuity at the terminal point after normalization. Concatenating with the initial path to $u_0$ proves the lemma.
\end{proof}

\subsection{The global bound for nonnegative functions}
\begin{proposition}\label{prop:local-global}
For every $0<\delta<1$,
\be\label{eq:nonnegative-global}
c_{\La,q}^{+}\ge\delta I_{\La,q}(\delta).
\ee
\end{proposition}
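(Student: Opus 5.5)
Fix a nonnegative $u\notin\MR$; after scaling we may assume $\nm{u}_q=1$. We will show $D_{\La,q}(u)\ge\delta I_{\La,q}(\delta)\dist_\La(u,\MR)^2$, splitting into two cases according to the size of $\rho(u)$.

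\emph{Case $\rho(u)\le\delta$.} The definition of $I_{\La,q}(\delta)$ gives directly $D_{\La,q}(u)\ge I_{\La,q}(\delta)\dist_\La(u,\MR)^2\ge\delta I_{\La,q}(\delta)\dist_\La(u,\MR)^2$.

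\emph{Case $\rho(u)>\delta$.} Take the path $\Gamma$ of Lemma \ref{lem:energy-path}. Along it $\nm{\Gamma(\tau)}_q=1$ and $\Gamma(\tau)\ne0$. The map $\tau\mapsto\rho(\Gamma(\tau))$ is continuous on $[0,1]$, since both $\dist_\La(\cdot,\MR)$ and $E_\La$ are continuous in $H^1$ and $E_\La$ stays away from $0$. At the endpoints we have $\rho(\Gamma(0))=\rho(u)>\delta$ and $\rho(\Gamma(1))=0$. By the intermediate value theorem there is a $\tau_*$ with $\rho(w)=\delta$, where $w:=\Gamma(\tau_*)$. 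Then $w\ge0$, and $w\notin\MR$ because $\rho(w)=\delta>0$. Applying the definition of $I_{\La,q}(\delta)$ to $w$ gives
\[
E_\La(w)-S_{\La,q}=D_{\La,q}(w)\ge I_{\La,q}(\delta)\,\delta\,E_\La(w).
\]
Now compare with $u$ using the two properties of the path, namely $\nm{w}_q=\nm{u}_q=1$ and $E_\La(w)\le E_\La(u)$:
\[
D_{\La,q}(u)=E_\La(u)-S_{\La,q}\ge E_\La(w)-S_{\La,q}\ge\delta I_{\La,q}(\delta)E_\La(w).
\]
It remains to compare $E_\La(w)$ with $\dist_\La(u,\MR)^2$, and this is the only delicate step. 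We have $E_\La(w)\ge S_{\La,q}$, but we need $E_\La(w)$ bounded below by $\dist_\La(u,\MR)^2\le E_\La(u)$. The direct route fails, since $E_\La(w)\le E_\La(u)$ only bounds $E_\La(w)$ from above. Instead, split again. If $I_{\La,q}(\delta)\delta\ge1$, then $E_\La(w)\ge S_{\La,q}/(1-\delta I_{\La,q}(\delta))$ is impossible, which forces a contradiction; so we may assume $\delta I_{\La,q}(\delta)<1$. Write $X:=E_\La(u)/S_{\La,q}\ge1$, $Y:=E_\La(w)/S_{\La,q}\in[1,X]$ and $k:=\delta I_{\La,q}(\delta)$. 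The inequality $D_{\La,q}(u)\ge k\,\dist^2$ would follow from $X-1\ge kX$, since $\dist_\La(u,\MR)^2\le E_\La(u)$. We know $X-1\ge Y-1\ge kY$, i.e.\ $Y\ge1/(1-k)$ by the bound for $w$. Hence $X\ge Y\ge1/(1-k)$, which is equivalent to $X-1\ge kX$. Therefore
\[
D_{\La,q}(u)\ge\delta I_{\La,q}(\delta)E_\La(u)\ge\delta I_{\La,q}(\delta)\dist_\La(u,\MR)^2.
\]

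Combining the two cases and taking the infimum over $u$ gives \eqref{eq:nonnegative-global}. The main care points are the continuity of $\rho$ along $\Gamma$ and the final monotonicity step. In that step the useful observation is that the lower bound at the crossing point $w$ is a lower bound on the \emph{ratio} $E_\La/S_{\La,q}$, and this ratio only increases when passing back from $w$ to $u$.
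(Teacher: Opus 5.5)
Your proof is correct and follows the paper's argument. You use the same case split and the same crossing point $w$ with $\rho(w)=\delta$ on the energy-controlled path of Lemma~\ref{lem:energy-path}, and your final ratio step is exactly the paper's chain $D_{\La,q}(u)/\dist_\La(u,\MR)^2\ge 1-S_{\La,q}/E_\La(u)\ge 1-S_{\La,q}/E_\La(w)=\delta\, D_{\La,q}(w)/\dist_\La(w,\MR)^2$. The separate case $\delta I_{\La,q}(\delta)\ge1$ is unnecessary, since $\delta I_{\La,q}(\delta)\le D_{\La,q}(w)/E_\La(w)=1-S_{\La,q}/E_\La(w)<1$ holds automatically.
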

\begin{proof}
By homogeneity, take $u\ge0$ with $\nm{u}_q=1$ and $u\notin\MR$. If $\rho(u)\le\delta$, its deficit ratio is at least $I_{\La,q}(\delta)$, which is a stronger estimate. Otherwise, use Lemma \ref{lem:energy-path}. Distance to a fixed set is Lipschitz in the energy norm, and along that path $E_\La\ge S_{\La,q}>0$. Hence $\rho$ is continuous and there is a point $w$ with
\[\rho(w)=\delta,\qquad \nm{w}_q=1,\qquad E_\La(w)\le E_\La(u).\]
Since $\dist_\La(u,\MR)^2\le E_\La(u)$ and the deficit is nonnegative,
\[\begin{aligned}
\frac{D_{\La,q}(u)}{\dist_\La(u,\MR)^2}
&\ge1-\frac{S_{\La,q}}{E_\La(u)}
\ge1-\frac{S_{\La,q}}{E_\La(w)}\\
&=\delta\frac{D_{\La,q}(w)}{\dist_\La(w,\MR)^2}
\ge\delta I_{\La,q}(\delta).
\end{aligned}\]
This proves \eqref{eq:nonnegative-global}.
\end{proof}

\subsection{Real-valued functions and the dimensional bound}
\begin{proposition}\label{prop:sign-reduction}
The optimal stability constants satisfy
\be\label{eq:sign-reduction}
c_{\La,q}\ge\min\left\{\frac12c_{\La,q}^{+},\ 1-2^{2/q-1}\right\}.
\ee
\end{proposition}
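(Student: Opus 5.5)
Write $u=u_+-u_-$ with $u_\pm\ge0$ having disjoint supports, so that $E_\La(u)=E_\La(u_+)+E_\La(u_-)$ and $\nm{u}_q^q=\nm{u_+}_q^q+\nm{u_-}_q^q$. By homogeneity we may normalize $\nm{u}_q=1$, and by the symmetry $u\mapsto-u$ of both the deficit and $\MR$ we may assume $m:=\nm{u_-}_q^q\le\frac12$, so $\nm{u_+}_q^q=1-m$. The plan follows the positive/negative-part argument of \cite{Dolbeault etc=2025}. We first bound the distance using the candidate optimizer nearest to $u_+$, then split the deficit into contributions from $u_\pm$. If $u_+\in\MR$ with $u_+=g$, the argument simplifies and is included below by taking $\dist_\La(u_+,\MR)=0$.

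\textbf{Distance bound.} Let $g$ be a nearest optimizer to $u_+$ (it exists by Lemma \ref{lem:nearest-relative}, or $g=0$ if $u_+=0$). Then
\[
\dist_\La(u,\MR)^2\le\nm{u-g}_\La^2\le\bigl(\nm{u_+-g}_\La+\nm{u_-}_\La\bigr)^2\le 2\dist_\La(u_+,\MR)^2+2E_\La(u_-).
\]
A slightly sharper route uses $\abr{u_+-g,u_-}_\La=-\abr{g,u_-}_\La\le0$, since $\abr{u_+,u_-}_\La=0$ and $g\ge0$ gives $\abr{g,u_-}_\La=\int g^{q-1}u_-\,c\ge0$. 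This yields $\nm{u-g}_\La^2\le\dist_\La(u_+,\MR)^2+E_\La(u_-)$ without the factor $2$. I will use whichever version matches the target constants; the $\frac12$ in the claim suggests the crude version suffices.

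\textbf{Deficit split.} Using concavity of $x\mapsto x^{2/q}$,
\[
S\nm{u}_q^2=S\bigl((1-m)+m\bigr)^{2/q}\le S(1-m)^{2/q}+S\,m^{2/q}\,\kappa,
\]
where the goal is the sharper inequality $(A+B)^{2/q}\le A^{2/q}+2^{2/q-1}B^{2/q}$ for $B\le A$. This holds because $h(B)=(A+B)^{2/q}-A^{2/q}-2^{2/q-1}B^{2/q}$ satisfies $h(0)=0$, $h(A)=0$, and $h$ is convex–concave in an appropriate sense; equivalently, by scaling, $\phi(x)=(1+x)^{2/q}-1-2^{2/q-1}x^{2/q}\le0$ on $[0,1]$. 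I would check this via $\phi(0)=\phi(1)=0$ and the sign analysis of $\phi'$. Then
\[
D(u)\ge D(u_+)+E_\La(u_-)-2^{2/q-1}S\nm{u_-}_q^2\ge D(u_+)+(1-2^{2/q-1})E_\La(u_-),
\]
by the CKN inequality applied to $u_-$.

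\textbf{Combination.} Since $u_+\ge0$, we have $D(u_+)\ge c^+_{\La,q}\dist_\La(u_+,\MR)^2$. Hence
\[
D(u)\ge\min\{c^+,\,1-2^{2/q-1}\}\bigl(\dist(u_+)^2+E(u_-)\bigr)\ge\min\bigl\{\tfrac12c^+,\ \tfrac12(1-2^{2/q-1})\bigr\}\cdot 2\bigl(\dist(u_+)^2+E(u_-)\bigr).
\]
With the sharper distance bound we obtain $\min\{c^+,1-2^{2/q-1}\}$ directly, which is stronger than the claim. With the crude bound we get $\frac12\min\{\cdots\}$; to reach the stated $1-2^{2/q-1}$ in the second slot, the sharper orthogonality estimate is needed.

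\textbf{Main obstacle.} The main obstacle is the elementary inequality $\phi\le0$ on $[0,1]$. It can be proven as follows. For $y=x^{2/q}$-type substitutions, $\phi'(x)=\frac2q\bigl[(1+x)^{2/q-1}-2^{2/q-1}x^{2/q-1}\bigr]$, which vanishes only where $(1+x)/x=2$, i.e.\ at $x=1$. Moreover $\phi'\to-\infty$ as $x\to0^+$. Hence $\phi$ decreases on $[0,1]$, then reaches $\phi(1)=0$—but this contradicts decreasing unless we recheck the signs. In fact $\phi'<0$ on $(0,1)$, so $\phi(x)\ge\phi(1)=0$, which is the wrong direction.

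So the inequality must instead be arranged as $(A+B)^{2/q}\le A^{2/q}+B^{2/q}$, with the factor $2^{2/q-1}$ coming from the case analysis $m\le\frac12$ in the tangent-line bound $B^{2/q}\ge\ldots$. I expect to resolve this by applying concavity at the point $A=B$. This bookkeeping is the delicate step.
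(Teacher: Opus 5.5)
There is a genuine gap. Once repaired, your argument only gives $c_{\La,q}\ge\frac12\min\{c^+_{\La,q},1-2^{2/q-1}\}$. That would still give the order $d^{-1}$ in Theorem~\ref{thm:main}, but it is not the stated constant, and neither of your two ways of recovering the missing factor $2$ works.

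\textbf{The sharper distance bound.} It has the sign reversed. With $g=cV(\cdot-\tau)$, $c>0$, write $u-g=(u_+-g)-u_-$ and use $\abr{u_+,u_-}_\La=0$:
\[
\nm{u-g}_\La^2=\nm{u_+-g}_\La^2+E_\La(u_-)+2\abr{g,u_-}_\La,\qquad \abr{g,u_-}_\La=c\int_\CR V(\cdot-\tau)^{q-1}u_-\,\rd\nu\ge0 .
\]
So the cross term makes $\nm{u-g}_\La^2$ larger. You are left with $\dist_\La(u,\MR)^2\le2\dist_\La(u_+,\MR)^2+2E_\La(u_-)$.

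\textbf{The splitting inequality.} Your inequality $(A+B)^{2/q}\le A^{2/q}+2^{2/q-1}B^{2/q}$ for $0\le B\le A$ is in fact true. Your $\phi$ satisfies $\phi(0)=0$ and $\phi'<0$ on $(0,1)$, hence $\phi\le0$; the error was claiming $\phi(1)=0$, whereas $\phi(1)=2^{2/q-1}-1<0$. But it only yields $D_{\La,q}(u)\ge D_{\La,q}(u_+)+(1-2^{2/q-1})E_\La(u_-)$. Combined with the factor-$2$ distance bound, this gives half of the claimed second slot. Your fallback $(A+B)^{2/q}\le A^{2/q}+B^{2/q}$ gives coefficient $0$.

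\textbf{The missing ingredient.} You need the sharp constant in the mass-splitting inequality. The function vanishing at both $B=0$ and $B=A$ that you had in mind corresponds to the constant $2^{2/q}-1$, not $2^{2/q-1}$:
\[
(1+x)^{2/q}\le1+(2^{2/q}-1)x^{2/q}\qquad(0\le x\le1).
\]
This holds because the difference vanishes at $x=0$ and $x=1$, and its derivative changes sign exactly once, from positive to negative.

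Set $x=m/(1-m)$ with $m=\nm{u_-}_q^q\le\frac12$. The inequality becomes
\[
H(m)=m^{2/q}+(1-m)^{2/q}-1\ge\zeta_q m^{2/q},\qquad \zeta_q=2(1-2^{2/q-1}).
\]
The paper derives this by combining two chord bounds for concave functions on $[0,\frac12]$: $H(m)\ge2(2^{1-2/q}-1)m$ and $(1-m)^{2/q}\ge1-2(1-2^{-2/q})m$.

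Then the CKN inequality for $u_-$ gives $D_{\La,q}(u)\ge D_{\La,q}(u_+)+\zeta_qE_\La(u_-)$, twice your coefficient. The crude factor-$2$ distance bound now yields exactly $\frac12\min\{c^+_{\La,q},\zeta_q\}=\min\{\frac12c^+_{\La,q},1-2^{2/q-1}\}$.

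The rest of your outline matches the paper's proof: disjoint supports, the nearest optimizer for $u_+$, and the sign normalization $m\le\frac12$.
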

\begin{proof}
This is the mass-splitting argument of \cite{Dolbeault etc=2025}. Normalize $\nm{u}_q=1$, write $u=u_+-u_-$, and change its sign if necessary so that
\[m:=\nm{u_-}_q^q\in[0,1/2].\]
The case $m=0$ follows from the nonnegative estimate. For $m>0$, put
\[H(m):=m^{2/q}+(1-m)^{2/q}-1.\]
The positive and negative parts have disjoint supports, so both the gradient and the potential energy split exactly. Therefore
\[D_{\La,q}(u)=D_{\La,q}(u_+)+D_{\La,q}(u_-)+S_{\La,q}H(m).\]
Concavity on $[0,1/2]$ gives
\[
H(m)\ge2\bigl(2^{1-2/q}-1\bigr)m,\qquad
(1-m)^{2/q}\ge1-2\bigl(1-2^{-2/q}\bigr)m.
\]
Consequently,
\[H(m)\ge m^{2/q}-2\bigl(1-2^{-2/q}\bigr)m
\ge m^{2/q}-\frac{1-2^{-2/q}}{2^{1-2/q}-1}H(m),\]
and hence
\[H(m)\ge\zeta_q m^{2/q},\qquad
\zeta_q:=2\bigl(1-2^{2/q-1}\bigr)\in(0,1).\]
Using the sharp inequality for $u_-$, we obtain
\[\begin{aligned}
D_{\La,q}(u_-)+S_{\La,q}H(m)
&\ge E_\La(u_-)-(1-\zeta_q)S_{\La,q}m^{2/q}\ge\zeta_q E_\La(u_-).
\end{aligned}\]
Let $g\in\MR$ be a nearest optimizer for $u_+$, given by Lemma \ref{lem:nearest-relative}. The Hilbert norm inequality yields
\[\begin{aligned}
D_{\La,q}(u)
&\ge c_{\La,q}^{+}E_\La(u_+-g)+\zeta_qE_\La(u_-)\\
&\ge\frac12\min\{c_{\La,q}^{+},\zeta_q\}\,E_\La(u-g)\\
&\ge\min\left\{\frac12c_{\La,q}^{+},1-2^{2/q-1}\right\}\dist_\La(u,\MR)^2.
\end{aligned}\]
Homogeneity gives the result without the normalization.
\end{proof}

\begin{proof}[Proof of Theorem \ref{thm:main}]
Fix $a,b$ as in the theorem. Theorem \ref{thm:local} supplies the same positive constants $\ve_0,\ti\delta$ in every admissible dimension. Let $\delta_0$ be given by \eqref{eq:relative-radius}. Lemma \ref{lem:nearest-relative} and Propositions \ref{prop:local-global}--\ref{prop:sign-reduction} imply
\be\label{eq:global-before-dimension}
c_{\La,q}\ge\min\left\{\frac{\delta_0(q-2)\ve_0}{2},\ 1-2^{2/q-1}\right\}.
\ee
The defining relation for $q$ gives
\[q-2=\frac{4(1+a-b)}{d-2(1+a-b)}\ge\frac{4(1+a-b)}d,\qquad
\frac2q-1=-\frac{2(1+a-b)}d.\]
Since $0<2(1+a-b)/d<1$, concavity of $x\mapsto1-2^{-x}$ on $[0,1]$ gives
\[1-2^{2/q-1}
=1-2^{-2(1+a-b)/d}\ge\frac{1+a-b}{d}.\]
Thus \eqref{eq:global-before-dimension} yields
\[c_{\La,q}\ge\frac{1+a-b}{d}\min\{2\delta_0\ve_0,1\}.
\]
The choice \eqref{eq:local-epsilon-choice} gives $\ve_0\le1/6$, so $2\delta_0\ve_0<1$. We may therefore take
\be\label{eq:main-stability-constant}
\displaystyle
\beta_{a,b}:=\frac{2(1+a-b)\ti\delta\ve_0}{1+\ti\delta}>0,
\ee
with $\ve_0$ and $\ti\delta$ given by \eqref{eq:local-epsilon-choice} and \eqref{eq:local-delta-choice}.
This number depends only on $a,b$. Finally, the Emden--Fowler identities \eqref{equiv} preserve both the deficit and the full energy distance to the optimizer manifold. Hence $c_{d,a,b}=c_{\La,q}$ by \eqref{CKN-C-c}, proving the theorem in all admissible dimensions $d\ge2$.
\end{proof}

\section{The critical case $a=b>0$}\label{sec:critical}

The critical case has a different dimensional scale from Theorem \ref{thm:main}. Its proof combines Sobolev stability with the angular energy introduced by the radial power transformation. Throughout this section,
\[
d\ge3,\qquad 0<a<a_c,\qquad q=2^*.
\]

\subsection{Angular energy and the center of a Sobolev optimizer}
Use the angular energy $A(F)$ from \eqref{eq:angular-energy} and the Sobolev deficit from \eqref{eq:sobolev-deficit}. For $F\in D^{1,2}(\R^d)$, write
\[
 E(F):=\int_{\R^d}|\nabla F|^2\rd y.
\]
Then
\[
0\le A(F)\le E(F),\qquad A(F)=0\quad\text{if }F\text{ is radial}.
\]
Define the Sobolev bubbles by
\[
 U_{\lambda,z}(y):=\lambda^{-a_c}\left(1+\left|\frac{y-z}{\lambda}\right|^2\right)^{-a_c},\qquad
 \lambda>0,\quad z\in\R^d.
\]
Explicitly, the full Sobolev optimizer manifold and its centered submanifold are
\[
 \mathcal B=\{cU_{\lambda,z}:c\in\R,\ \lambda>0,\ z\in\R^d\},\qquad
 \mathcal B_0=\{cU_{\lambda,0}:c\in\R,\ \lambda>0\}.
\]
We include zero in both manifolds and set
\[
 e(F):=\dist_{D^{1,2}}(F,\mathcal B),\qquad
 r(F):=\dist_{D^{1,2}}(F,\mathcal B_0).
\]
The Sobolev stability estimate \cite[Theorem 1.1]{Dolbeault etc=2025}, with the constant $\beta_{\mathrm{Sob}}$ from \eqref{eq:intro-sobolev-dimension}, reads
\be\label{eq:critical-sobolev-stability}
 D_{\mathrm{Sob}}(F)\ge\frac{\beta_{\mathrm{Sob}}}{d}\,e(F)^2
 \qquad\text{for every }d\ge3.
\ee

\begin{lemma}\label{lem:critical-center-control}
For every $d\ge3$ and $F\in D^{1,2}(\R^d)$,
\be\label{eq:critical-center-control}
 r(F)^2\le25d\,e(F)^2+24d\,A(F).
\ee
\end{lemma}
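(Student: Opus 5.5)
The plan is a triangle-inequality argument through the nearest full Sobolev bubble, which reduces \eqref{eq:critical-center-control} to a statement about single bubbles. Two facts drive it: $\sqrt{A(\cdot)}$ is a seminorm (it is the $L^2$ norm of the tangential part of the gradient), and $A\le E$.

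\textbf{Step 1 (reduction).} If $e(F)=0$ through the zero element, the claim is trivial. Otherwise, let $G=cU_{\lambda,z}\in\mathcal B$ be a nearest point, which exists by a standard compactness argument, so that $e:=e(F)=\nm{F-G}_{D^{1,2}}$. Then
\[
\sqrt{A(G)}\le\sqrt{A(F)}+\sqrt{A(F-G)}\le\sqrt{A(F)}+e .
\]
Suppose we have the single-bubble estimate
\be\label{eq:plan-bubble}
\dist_{D^{1,2}}(U_{\lambda,z},\mathcal B_0)^2\le 6d\,A(U_{\lambda,z})\qquad\text{for all }\lambda>0,\ z\in\R^d .
\ee
Both sides of \eqref{eq:plan-bubble} are quadratic in $c$, so we get
\[
r(F)\le e+\dist(G,\mathcal B_0)\le e+\sqrt{6d}\,\bigl(\sqrt{A(F)}+e\bigr).
\]
Using $(x+y)^2\le2x^2+2y^2$ twice gives
\[
r(F)^2\le 2e^2+24d\,e^2+24d\,A(F)\le 25d\,e^2+24d\,A(F),
\]
since $d\ge3$. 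This is exactly \eqref{eq:critical-center-control}. So everything reduces to \eqref{eq:plan-bubble}.

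\textbf{Step 2 (normalization and explicit formulas).} By dilation and rotation invariance of $E$, of $A$ and of $\mathcal B_0$, we may take $\lambda=1$ and $z=|z|e_1$. Since $\nabla U_{1,z}=-2a_c(1+|y-z|^2)^{-a_c-1}(y-z)$ and the tangential part of $y-z$ equals minus the tangential part of $z$, we obtain
\[
A(U_{1,z})=4a_c^2\int_{\R^d}(1+|y-z|^2)^{-2a_c-2}\,|z|^2\bigl(1-\omega_1^2\bigr)\rd y,\qquad \omega=y/|y| .
\]
On the other side,
\[
\dist(U_{1,z},\mathcal B_0)^2\le\min\Bigl\{E(U),\ \inf_{\mu,t}\nm{U_{1,z}-tU_{\mu,0}}^2\Bigr\},
\]
and $\nm{U_{1,z}-U_{1,0}}^2=2E(U)-2\langle U_{1,z},U_{1,0}\rangle$. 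The inner product here is explicit through $\int U_{1,z}^{2^*-1}U_{1,0}$, or equivalently through stereographic projection as a function of a single angle on $\S^d$.

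\textbf{Step 3 (the two regimes).} For small $|z|$, I would bound $\nm{U_{1,z}-U_{1,0}}^2\le|z|^2\nm{\partial_1U}_2^2=|z|^2E(U)/d$. Meanwhile $A(U_{1,z})\ge c\,|z|^2E(U)\bigl(1-O(1/d)\bigr)$, because the average of $1-\omega_1^2$ equals $1-1/d$ and the weight is close to radial. This gives a ratio of order $1$, well within the factor $6d$. For large $|z|$, I would use the trivial bound $\dist^2\le E(U)$. It then suffices to show $A(U_{1,z})\ge E(U)/(6d)$. Heuristically, once $|z|\gtrsim1$ the bubble's mass sits near $z$, where the angular direction from the origin carries a fixed fraction of $|\nabla U|^2$.

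\textbf{Main obstacle.} The hard part is making Step 3 uniform in $|z|$, with the explicit constant $6$ and only a factor $d$ lost. My first attempt is to pass to $\S^d$ by stereographic projection. There $U_{1,z}$ becomes a conformal factor depending on one angle, and both $A$ and the distance become one-dimensional Beta-type integrals in $|z|$. I would then compare the two functions of $|z|$ monotonically, splitting at $|z|=1$.
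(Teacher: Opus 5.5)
Your Step 1 is correct and coincides with the paper's reduction. The paper also proves $r(B)^2\le 6d\,A(B)$ for every $B\in\mathcal B$ and concludes with the same triangle-inequality bookkeeping; it takes an infimum over $B$, so no nearest point is needed.

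The gap is that this single-bubble estimate, which is the real substance of the lemma, is never proved, and your sketch for it rests on a miscomputation. In the small-$|z|$ regime you bound $\nm{U_{1,z}-U_{1,0}}^2$ by $|z|^2\nm{\pa_1U}_2^2=|z|^2E(U)/d$. But $E(U)/d$ is the $L^2$ norm of $\pa_1U$. The $D^{1,2}$ norm of the difference is controlled by $|z|^2\nm{\nabla\pa_1U}_2^2$. Differentiating $-\Delta U=d(d-2)U^{q-1}$ and evaluating a Beta integral gives $\nm{\nabla\pa_1U}_2^2=\frac{d^2-4}{4(d+1)}E(U)\approx\frac d4E(U)$.

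This is also the true size of $r(U_{1,z})^2$ as $z\to0$. The nonradial part of $U_{1,z}$ is $-z\cdot\nabla U+O(|z|^2)$, and it is energy-orthogonal to every radial function. Meanwhile $A(U_{1,z})=\frac{(d-1)(d-2)}{d^2}|z|^2E(U)+o(|z|^2)$. Hence $r(U_{1,z})^2/A(U_{1,z})\to\frac{d^2(d+2)}{4(d^2-1)}\sim\frac d4$, not a quantity of order $1$. So the factor $d$ is spent exactly in the small-$|z|$ regime. For $|z|\ge1$ one has the dimension-free bound $A\ge E(U)/18$. Your diagnosis of where the difficulty lies is therefore reversed.

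In addition, the claim that the weight $(1+|y-z|^2)^{-d}$ is ``close to radial'' about the origin is not uniform in $d$. It breaks down once $|z|\gtrsim d^{-1/2}$, so it cannot give a $d$-independent constant on a fixed range of $|z|$.

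The missing idea is to use polar coordinates centered at the bubble, $y=z+\rho\omega$ with $z=te_1$. The weight is then exactly radial, and
\[
\frac{A(U_{1,z})}{E(U)}=\int_0^\infty\!\!\int_{\S^{d-1}}\frac{t^2(1-\omega_1^2)}{\rho^2+t^2+2\rho t\omega_1}\,\rd\sigma(\omega)\,\rd\mu_d(\rho),\qquad \rd\mu_d\propto\rho^{d+1}(1+\rho^2)^{-d}\rd\rho .
\]
Use the elementary bounds $\rho^2+t^2+2\rho t\omega_1\le2(\rho^2+t^2)$ and $t^2/(\rho^2+t^2)\ge\min\{t^2,1\}/(1+\rho^2)$. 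Together with $\int(1-\omega_1^2)\rd\sigma=\frac{d-1}d$ and $\int(1+\rho^2)^{-1}\rd\mu_d=\frac{d-2}{2d}$, they give $A(U_{1,z})\ge\frac{(d-1)(d-2)}{4d^2}\min\{t^2,1\}E(U)\ge\frac1{18}\min\{t^2,1\}E(U)$ for all $t\ge0$ and $d\ge3$. Combine this with $r(U_{1,z})^2\le\min\{E(U),t^2\nm{\nabla\pa_1U}_2^2\}\le\frac d3\min\{1,t^2\}E(U)$. This yields $r^2\le6d\,A$ directly, with no case split, no stereographic projection, and no monotone comparison of Beta functions.
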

\begin{proof}
We first prove the estimate
\be\label{eq:critical-bubble-center}
 r(B)^2\le6d\,A(B)\qquad\text{for every }B\in\mathcal B.
\ee
By homogeneity, dilation invariance and rotation invariance, it suffices to consider
\[
 U_z(y)=(1+|y-z|^2)^{-a_c},\qquad z=t e_1,\quad t\ge0.
\]
Writing $y=z+\rho\omega$, we obtain
\[
 \nabla U_z(y)=-2a_c\rho\omega(1+\rho^2)^{-a_c-1}.
\]
Projection onto the tangent space to the sphere centered at the origin therefore gives
\be\label{eq:critical-bubble-angular-integral}
 \frac{A(U_z)}{E(U_0)}
 =\int_0^\infty\int_{\S^{d-1}}
 \frac{t^2(1-\omega_1^2)}{\rho^2+t^2+2\rho t\omega_1}
 \rd\sigma(\omega)\rd\mu_d(\rho),
\ee
where $\sigma$ is normalized surface measure and
\[
 \rd\mu_d(\rho):=
 \frac{\rho^{d+1}(1+\rho^2)^{-d}\rd\rho}
 {\displaystyle\int_0^\infty r^{d+1}(1+r^2)^{-d}\rd r}.
\]
The inequalities
\[
 \rho^2+t^2+2\rho t\omega_1\le2(\rho^2+t^2),\qquad
 \frac{t^2}{\rho^2+t^2}\ge\frac{\min\{t^2,1\}}{1+\rho^2}
\]
and the integral identities
\[
 \int_{\S^{d-1}}(1-\omega_1^2)\rd\sigma=\frac{d-1}{d},\qquad
 \int_0^\infty\frac{\rd\mu_d(\rho)}{1+\rho^2}=\frac{d-2}{2d}
\]
imply
\be\label{eq:critical-bubble-angular-lower}
 \frac{A(U_z)}{E(U_0)}
 \ge\frac{(d-1)(d-2)}{4d^2}\min\{t^2,1\}
 \ge\frac1{18}\min\{t^2,1\}.
\ee
On the other hand, comparison with $0$ and $U_0$, followed by integration along the translation path, gives
\[
 r(U_z)^2\le\min\left\{E(U_0),\ t^2\nm{\nabla\pa_1U_0}_2^2\right\}.
\]
Differentiating
\[
-\Delta U_0=d(d-2)U_0^{q-1},
\]
and using
\[
 \frac{\displaystyle\int_0^\infty\rho^{d+1}(1+\rho^2)^{-d-2}\rd\rho}
 {\displaystyle\int_0^\infty\rho^{d+1}(1+\rho^2)^{-d}\rd\rho}
 =\frac{d-2}{4(d+1)},
\]
we get
\[
 \frac{\nm{\nabla\pa_1U_0}_2^2}{E(U_0)}
 =\frac{d^2-4}{4(d+1)}\le\frac d4.
\]
Consequently, since $d\ge3$,
\[
 \frac{r(U_z)^2}{E(U_0)}
 \le\min\{1,dt^2/4\}
 \le\frac d3\min\{1,t^2\}.
\]
Together with \eqref{eq:critical-bubble-angular-lower}, this proves \eqref{eq:critical-bubble-center}.

For arbitrary $F\in D^{1,2}(\R^d)$ and $B\in\mathcal B$, the triangle inequality for the distance and \eqref{eq:critical-bubble-center} give
\[
 \begin{aligned}
 r(F)^2
 &\le2E(F-B)+2r(B)^2\\
 &\le2E(F-B)+12d\,A(B)\\
 &\le(2+24d)E(F-B)+24d\,A(F).
 \end{aligned}
\]
Here we used
\[
A(B)\le2A(F)+2A(F-B)\le2A(F)+2E(F-B).
\]
Taking the infimum over $B\in\mathcal B$ and using $2+24d\le25d$ proves \eqref{eq:critical-center-control}. 
\end{proof}

\subsection{A power transformation and the lower bound}
With $\alpha$ from \eqref{eq:critical-parameters}, apply the radial change of variables of \cite[Section~3.1, equation~(3.1)]{Dolbeault etc=2016}, specialized to $b=a$:
\be\label{eq:critical-power-change}
 y=|x|^{\alpha-1}x,\qquad f(x)=F(y).
\ee
The identities
\[
d-2-2a=\alpha(d-2),\qquad d-aq=\alpha d
\]
and polar coordinates give
\be\label{eq:critical-power-identities}
 \begin{aligned}
 E_a(f)&=\alpha\bigl(E(F)+(\alpha^{-2}-1) A(F)\bigr),\\
 \int_{\R^d}|x|^{-aq}|f|^q\rd x
 &=\alpha^{-1}\int_{\R^d}|F|^q\rd y.
 \end{aligned}
\ee
Since
\[
E(F)\le E(F)+(\alpha^{-2}-1) A(F)\le\alpha^{-2}E(F),
\]
the transformation extends by density to an isomorphism between $D_a^{1,2}(\R^d)$ and $D^{1,2}(\R^d)$.
The transformation \eqref{eq:critical-power-change} maps $\MR_{a,a}$ onto $\mathcal B_0$. Indeed,
\[
(a_c-a)(q-2)=2\alpha,
\]
so the profile \eqref{W} becomes a multiple of $(1+|y|^2)^{-a_c}$, and dilations and amplitudes produce precisely $\mathcal B_0$. 
The Sobolev inequality, with equality on centered bubbles, now gives
\be\label{eq:critical-sharp-constant}
 S_{d,a,a}=\alpha^{1+2/q}S_d.
\ee
Therefore the weighted deficit satisfies
\be\label{eq:critical-deficit-identity}
 D_{d,a,a}(f)=\alpha\bigl(D_{\mathrm{Sob}}(F)+(\alpha^{-2}-1) A(F)\bigr).
\ee
For every radial $G$,
\[
A(F-G)=A(F).
\]
Therefore the distance has the exact expression
\be\label{eq:critical-distance-identity}
 \dist_{D_a^{1,2}}(f,\MR_{a,a})^2
 =\alpha\bigl(r(F)^2+(\alpha^{-2}-1) A(F)\bigr).
\ee

Combining \eqref{eq:critical-sobolev-stability} and Lemma \ref{lem:critical-center-control} with these identities gives
\[
 \begin{aligned}
 D_{d,a,a}(f)&\ge\alpha\left(\frac{\beta_{\mathrm{Sob}}}{d}e(F)^2+(\alpha^{-2}-1) A(F)\right),\\
 \dist_{D_a^{1,2}}(f,\MR_{a,a})^2
 &\le\alpha\left(25d\,e(F)^2+(24d+\alpha^{-2}-1)A(F)\right).
 \end{aligned}
\]
It follows, for arbitrary real-valued $f$, that
\be\label{eq:critical-explicit-lower}
 c_{d,a,a}\ge\min\left\{\frac{\beta_{\mathrm{Sob}}}{25d^2},\ \frac{\alpha^{-2}-1}{24d+\alpha^{-2}-1}\right\}.
\ee
Finally,
\[
 \alpha^{-2}-1=\left(1-\frac{2a}{d-2}\right)^{-2}-1
 \ge\frac{4a}{d-2}\ge\frac{4a}{d}.
\]
Because $4a<2(d-2)<d^2$, monotonicity of $x\mapsto x/(24d+x)$ yields
\[
 \frac{\alpha^{-2}-1}{24d+\alpha^{-2}-1}
 \ge\frac{4a}{24d^2+4a}
 \ge\frac{4a}{25d^2}.
\]
Thus, with $\beta_a$ as in \eqref{eq:critical-beta},
\be\label{eq:critical-dimensional-lower}
 c_{d,a,a}\ge\frac{\beta_a}{d^2}.
\ee

\subsection{The upper bound and the optimal dimensional scale}
\begin{proof}[Completion of the proof of Theorem \ref{thm:critical}]
It remains to establish the upper bound. Work on the cylinder with $\La,s$ as in \eqref{eq:mass-parameter}. The first angular eigenvalue and a corresponding eigenfunction are
\[
 \mu_{1,0}=\frac{s(2s+q-2)}q,\qquad
 \phi(t,\omega)=\sbr{\cosh\left(\frac{q-2}{2}\sqrt\La\,t\right)}^{-2s/(q-2)}\omega_1.
\]
Differentiating the formula for $\phi$ and using
\[
-\Delta_\omega\omega_1=(d-1)\omega_1
\]
gives
\[
 (-\pa_t^2-\Delta_\omega+\La)\phi=\mu_{1,0}V^{q-2}\phi.
\]
Since $s>1$,
\[
\phi\in H^1(\CR),\qquad \phi/V\in L^\infty(\CR).
\]
Thus the perturbation
\[
u_\varepsilon=V+\varepsilon\phi
\]
is positive for sufficiently small real $\varepsilon$.

The zero spherical mean of $\omega_1$ shows that $\phi$ is orthogonal in the energy inner product to every function of $t$ alone. Hence, for each $g\in\MR$,
\[
 \nm{u_\varepsilon-g}_\La^2
 =\nm{V-g}_\La^2+\varepsilon^2\nm\phi_\La^2,
\]
so that
\be\label{eq:critical-perturbation-distance}
 \dist_\La(u_\varepsilon,\MR)^2=\varepsilon^2\nm\phi_\La^2.
\ee
The boundedness of $\phi/V$ justifies the Taylor expansion of the deficit at $V$. Using
\[
S_{\La,q}\nm V_q^{2-q}=1,\qquad
\int_\CR V^{q-1}\phi\rd\nu=0,
\]
we obtain, in each fixed dimension,
\[
 \begin{aligned}
 \nm{u_\varepsilon}_\La^2-S_{\La,q}\nm{u_\varepsilon}_q^2
 &=\varepsilon^2\left(\nm\phi_\La^2-(q-1)\int_\CR V^{q-2}\phi^2\rd\nu\right)+o(\varepsilon^2)\\
 &=\varepsilon^2\nm\phi_\La^2\left(1-\frac{q-1}{\mu_{1,0}}\right)+o(\varepsilon^2).
 \end{aligned}
\]
By \eqref{CKN-C-c}, \eqref{eq:critical-perturbation-distance}, and the definition of the optimal stability constant,
\be\label{eq:critical-explicit-upper}
 c_{d,a,a}\le1-\frac{q-1}{\mu_{1,0}}
 =\frac{(2s-q)(s+q-1)}{s(2s+q-2)}.
\ee
For $a>0$, one has $s>d/(d-2)=q/2$, so this upper bound is positive.

Now fix $a>0$ and let $d\to\infty$ through the admissible dimensions. Direct expansion gives
\[
 q=2+\frac4d+\frac8{d^2}+O(d^{-3}),\qquad
 s=1+\frac2d+\frac{8a+4}{d^2}+O_a(d^{-3}).
\]
Thus
\[
2s-q=\frac{16a}{d^2}+O_a(d^{-3}),
\]
while
\[
 \frac{s+q-1}{s(2s+q-2)}=1+O_a(d^{-1}).
\]
Consequently \eqref{eq:critical-explicit-upper} implies
\be\label{eq:critical-asymptotic-calculation}
 c_{d,a,a}\le\frac{16a}{d^2}+O_a(d^{-3}).
\ee
Together with \eqref{eq:critical-explicit-lower} and \eqref{eq:critical-dimensional-lower}, this proves all assertions of Theorem \ref{thm:critical}. In particular, the dimensional order is $d^{-2}$.
\end{proof}

\section{The degenerate case on the Felli--Schneider curve}\label{sec:degenerate}
Throughout this section $a<0$ is fixed and $b=b_{FS}(a)$. We use $n$ from \eqref{eq:fs-parameters}, the operator $\mathcal A$ from Section~\ref{sec:global}, and the normalized measure $\nu_1$ from Section~\ref{sec:local}. The cylinder formulation and quartic stability framework are those of \cite[Sections~1.3--1.4]{Frank-Peteranderl=2024}. We track the constants as the dimension varies. Write
\be\label{eq:fs-normalized-functionals}
 \mathsf E(u):=\nm{u}_{\La,\nu_1}^2,
 \qquad \mathsf D(u):=\mathsf E(u)-\nm{u}_{q,\nu_1}^2,
 \qquad \|\ell\|_*:=\sup_{\mathsf E(u)=1}|\ell(u)|.
\ee
Thus \eqref{CKN-C-1} gives $\mathsf E(u)\ge\nm{u}_{q,\nu_1}^2$ and $\mathsf E(V)=\int V^q\rd\nu_1=1$. 
The FS relation and its large-dimensional consequences are
\be\label{eq:fs-parameter-relations}
 \frac{\La}{n^2}=\frac{d-1}{2n+1},\qquad
 n=\frac d2-1-2a+O_a(d^{-1}),\qquad
 2n+2-d\longrightarrow-4a,\qquad \theta\asymp_a d^{-1}.
\ee
Here and below $c_a,C_a>0$ may change from line to line and depend only on $a$. We first take $d\ge d_0(a)$, increasing this threshold $d_0(a)$ as necessary, so that $d\ge6$, $n\ge2$, and $1\le(2n+2)/d\le2$. Finitely many smaller dimensions will be included at the end.

\subsection{Approximate profiles and a uniform local chart}
The projection framework of \cite[Propositions~2--3]{Frank-Peteranderl=2024} separates the amplitude and axial-translation directions
from the additional zero modes on the FS curve. 
We develop this framework using the following nonlinear family. 
The following Lemmas~\ref{lem:fs-chart}--\ref{lem:fs-forms} reformulate \cite[Propositions~2--3]{Frank-Peteranderl=2024} by establishing a local modulation decomposition together with profile and coercivity estimates, with constants and neighborhood radii depending only on $a$.

For $\sigma\in\R$ and $\tau\in\R^d$ in a fixed small ball, set
\be\label{eq:fs-profiles}
 \Phi_{\sigma,\tau}(t,\omega)
 :=V(0)\left[\cosh\left(\frac{\sqrt\La}{n}
                  \left(t-\frac\sigma{\sqrt d}\right)\right)
              -\frac{\tau\cdot\omega}{\sqrt d}\right]^{-n},
 \qquad
 \phi_{\sigma,\tau}:=\frac{\Phi_{\sigma,\tau}}
 {\nm{\Phi_{\sigma,\tau}}_{q,\nu_1}}.
\ee
The tangent space to $(c,\sigma,\tau)\mapsto c\phi_{\sigma,\tau}$ at $(1,\sigma,\tau)$ is
\be\label{eq:fs-tangent-space}
 \mathcal T_{\sigma,\tau}
 :=\operatorname{span}\left\{
 \phi_{\sigma,\tau},\ \partial_\sigma\phi_{\sigma,\tau},\quad
 \partial_{\tau_1}\phi_{\sigma,\tau},\ldots,
 \partial_{\tau_d}\phi_{\sigma,\tau}\right\}.
\ee
At $(\sigma,\tau)=(0,0)$ this space is the full Hessian kernel identified in \cite[Section~3.1, equation~(3.1)]{Frank-Peteranderl=2024}: the parameter derivatives are proportional to $V$, $V'$, and $V^{q/2}\omega_j$, respectively.
For the ordered parameter vector
\[
 \mathbf p=(p_0,\ldots,p_{d+1})=(c,\sigma,\tau_1,\ldots,\tau_d),
\]
the associated $(d+2)\times(d+2)$ energy Gram matrix is
\be\label{eq:fs-gram-matrix}
 G(\mathbf p):=
 \left(\left\langle
 \partial_{p_i}(c\phi_{\sigma,\tau}),
 \partial_{p_j}(c\phi_{\sigma,\tau})
 \right\rangle_{\La,\nu_1}\right)_{i,j=0}^{d+1}.
\ee
In particular, for $\xi=(\xi_0,\ldots,\xi_{d+1})\in\R^{d+2}$,
\[
 \xi^{\mathsf T}G(\mathbf p)\xi
 =\mathsf E\left(
 \xi_0\phi_{\sigma,\tau}
 +c\xi_1\partial_\sigma\phi_{\sigma,\tau}
 +c\sum_{j=1}^d\xi_{j+1}\partial_{\tau_j}\phi_{\sigma,\tau}
 \right).
\]
Thus $G(\mathbf p)$ is invertible exactly when these parameter derivatives are linearly independent. For $c\ne0$, this is equivalent to $\dim\mathcal T_{\sigma,\tau}=d+2$. Uniform invertibility on a parameter neighborhood means that there is $g_a>0$, depending only on $a$ and independent of $d$, such that throughout that neighborhood
\be\label{eq:fs-uniform-gram-inverse}
 \xi^{\mathsf T}G(\mathbf p)\xi\ge g_a|\xi|^2
 \quad\text{for all }\xi\in\R^{d+2},
 \qquad\text{equivalently}\qquad
 \|G(\mathbf p)^{-1}\|_{\rm op}\le g_a^{-1}.
\ee
Here $|\xi|$ is the Euclidean norm and $\|\cdot\|_{\rm op}$ is its induced operator norm. The lower bound controls the size of a parameter variation by the energy norm of the corresponding first-order profile variation.

Axial translation reduces all subsequent estimates to $\sigma=0$; write $\phi=\phi_{0,\tau}$ and
\[
 R:=\mathcal A\phi-\phi^{q-1},\qquad
 \rd\mu_0:=V^q\rd\nu_1,\qquad \rd\mu_\phi:=\phi^q\rd\nu_1,
 \qquad z:=\tanh(\sqrt\La\,t/n),\qquad
 X:=\sqrt{1-z^2}\,\omega.
\]
Both $\mu_0$ and $\mu_\phi$ are probability measures. Under $\mu_0$, the variable $z$ has density proportional to $(1-z^2)^n$ and is independent of $\omega$.

\begin{lemma}\label{lem:fs-chart}
There are $r_a\in(0,1/2)$ and $\varepsilon_a>0$, depending only on $a$ and independent of $d$, such that on the parameter neighborhood
\[
 \left\{(c,\sigma,\tau)\in\R\times\R\times\R^d:
          |c-1|+|\sigma|+|\tau|<r_a\right\},
\]
the first three multilinear derivatives of $(c,\sigma,\tau)\mapsto c\phi_{\sigma,\tau}$ have uniformly bounded energy norms, and $G(\mathbf p)$ satisfies \eqref{eq:fs-uniform-gram-inverse}. Consequently, every $u$ in the fixed energy ball
\be\label{eq:fs-energy-neighborhood}
 \left\{u\in H^1(\CR,\nu_1):
   \mathsf E(u-V)=\nm{u-V}_{\La,\nu_1}^2<\varepsilon_a^2\right\}
\ee
admits a decomposition
\be\label{eq:fs-chart-decomposition}
 u=c\phi_{\sigma,\tau}+h,\qquad
 h\perp_{\La,\nu_1}\mathcal T_{\sigma,\tau},\qquad
 |c-1|+|\sigma|+|\tau|+\sqrt{\mathsf E(h)}
 \le C_a\sqrt{\mathsf E(u-V)}.
\ee
Moreover, uniformly over the whole energy unit sphere of $\mathcal T_{0,\tau}$,
\be\label{eq:fs-tangent-moment}
 \sup_{\substack{w\in\mathcal T_{0,\tau}\\\mathsf E(w)=1}}
 \int\left|\frac{\mathcal Aw}{\phi^{q-1}}\right|^4\rd\mu_\phi
 \le C_a.
\ee
\end{lemma}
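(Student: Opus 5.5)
The plan is to work entirely in the coordinates $z=\tanh(\sqrt\La\,t/n)$ and $X=\sqrt{1-z^2}\,\omega$, in which the profiles become explicit. On the FS curve $V(t)=V(0)\cosh(\sqrt\La\,t/n)^{-n}$, because $2/(q-2)=n$ and $(q-2)\sqrt\La/2=\sqrt\La/n$. Hence
\[
\Phi_{0,\tau}=V\,(1-z\cdot0-X\cdot\tau/\sqrt d)^{-n}\cdot(\text{correction}),
\]
and more precisely
\[
\Phi_{0,\tau}/V=\bigl(1-\sqrt{1-z^2}\,\tau\cdot\omega/\sqrt d\bigr)^{-n}.
\]
Under $\mu_0$, the vector $X$ is distributed like a projection of a uniform point on a sphere of dimension about $2n+d$. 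This is the conformal picture: $\Phi_{\sigma,\tau}$ is an exact bubble for the unweighted problem with exponent $n$, and $X\cdot\tau$ has variance about $|\tau|^2/(2n+d)\asymp|\tau|^2/d$. The scaling $\tau/\sqrt d$ is exactly what makes $n\,X\cdot\tau/\sqrt d$ of order $|\tau|$, uniformly in $d$. So the first step is to prove uniform moment bounds. For $|\tau|<r_a$ small, the quantity
\[
\int(\Phi_{0,\tau}/V)^p\,(1+|\text{polynomial in }nX\cdot\tau/\sqrt d|)^k\rd\mu_0
\]
should be bounded by $C_{a,p,k}$. I would get this from the sub-Gaussian concentration of $\sqrt d\,X_j$ under $\mu_0$ (Gaussian limit with variance $\asymp1$), together with $n\asymp d$.

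Next, I would compute the parameter derivatives. Up to normalization, $\partial_c$ gives $\phi$, $\partial_\sigma$ gives $-\frac{n}{\sqrt d}\frac{\sqrt\La}{n}\tanh(\cdot)\,\Phi/(\cosh-\tau\cdot\omega/\sqrt d)$, and $\partial_{\tau_j}$ gives $\frac{n}{\sqrt d}\omega_j\,\Phi/(\cosh-\tau\cdot\omega/\sqrt d)$. The factor $\sqrt\La/\sqrt d\asymp1$ by \eqref{eq:fs-parameter-relations}. At $\tau=0$ these are multiples of $V$, $V'$ and $V^{q/2}\omega_j$, as the paper notes. I would then compute the Gram matrix at $\mathbf p=(1,0,0)$ exactly: it is diagonal by parity in $t$ and orthogonality of $1,\omega_j$, and its entries are beta-integrals. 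Using $\mathsf E(v)=\mu\int V^{q-2}v^2\rd\nu_1$ with eigenvalues $1$, $q-1$, $q-1$ and $\int\omega_j^2\rd\omega=|\S^{d-1}|/d$, the factor $n^2/d$ cancels the $1/d$. Each diagonal entry should therefore be bounded above and below by constants depending on $a$.

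For the multilinear derivatives up to order three, I would write each derivative as $\Phi$ times a polynomial in $\omega\cdot e/\sqrt d$-type factors, multiplied by $n^k$ and divided by powers of $(\cosh-\tau\cdot\omega/\sqrt d)$. The energy is handled through the identity $\mathsf E(w)=\langle\mathcal Aw,w\rangle$, where $\mathcal A\Phi_{\sigma,\tau}$ is computed explicitly; it should equal $\Phi^{q-1}$ times an explicit bounded factor plus the residual. This reduces all energy norms to weighted moments of the first step. Uniform bounds on the third derivatives then give a Lipschitz bound on $G$, with operator norm controlled by $C_a(|c-1|+|\sigma|+|\tau|)$; I would control the Gram matrix in operator norm via Cauchy--Schwarz on $\xi$-combinations. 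This yields \eqref{eq:fs-uniform-gram-inverse} with $g_a$ half the lower diagonal bound, after shrinking $r_a$.

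The decomposition \eqref{eq:fs-chart-decomposition} then follows from a quantitative implicit function theorem. Define $F(\mathbf p,u)=(\langle u-c\phi_{\sigma,\tau},\partial_{p_i}(c\phi)\rangle_{\La,\nu_1})_i\in\R^{d+2}$. Its $\mathbf p$-Jacobian at $(1,0,0;V)$ is $-G$, and the second derivatives are uniformly bounded as maps into $\R^{d+2}$ with Euclidean norm. This holds because the second variations are bounded in energy and the Riesz duality is dimension-free. A Newton/contraction argument in the Euclidean ball of radius $r_a$ then gives $\mathbf p$ with $|\mathbf p-\mathbf p_0|\le C_a\sqrt{\mathsf E(u-V)}$, and the bound on $h$ follows from the triangle inequality.

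Finally, for \eqref{eq:fs-tangent-moment}, I would use the explicit formulas for $\mathcal A$ of each basis derivative: each is $\phi^{q-1}$ times a function of the form polynomial$(nX\cdot e/\sqrt d,\,z)$ divided by powers of $(1-X\cdot\tau/\sqrt d)$. The hard part is uniformity over the whole $d$-dimensional unit sphere of $\mathcal T_{0,\tau}$. A general $w$ has coefficients $\xi$ with $|\xi|\le g_a^{-1/2}$, and the angular piece is $\sum\xi_j\omega_j\sqrt d\cdots$, i.e.\ $\sqrt d\,\omega\cdot\xi$. By rotation invariance, its $L^4(\mu_\phi)$ norm reduces to a one-direction moment, which is bounded via sub-Gaussianity of $\sqrt d\,\omega_1$. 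Because of the $\tau$-tilt in $\mu_\phi$, I would compare $\mu_\phi$ with $\mu_0$ using the density bound $\phi^q/V^q\le C_a e^{C|\tau|\sqrt d|X|\cdots}$, controlled by Hölder. I expect this change of measure to be the main obstacle, since the tilt $(1-X\cdot\tau/\sqrt d)^{-nq}$ is an exponential of an order-one Gaussian-like variable, and it requires $r_a$ small to keep the exponential moments finite.
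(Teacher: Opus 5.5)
Your proposal is correct and follows essentially the same route as the paper. The shared steps are:
\begin{itemize}
\item uniform sub-Gaussian moments of $\sqrt d\,z$ and $\sqrt d\,X\cdot e$ under $\mu_0$, with an exponential bound on the $\tau$-tilt;
\item energies of the parameter derivatives obtained from the differentiated profile equation $\mathcal A\phi=P_\tau\phi^{q-1}$ and integration by parts;
\item the explicit diagonal Gram matrix at $(1,0,0)$, perturbed on a fixed ball, followed by a quantitative implicit function theorem with dimension-free Euclidean bounds;
\item single-direction moments combined with the Gram inverse for \eqref{eq:fs-tangent-moment}.
\end{itemize}

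There is one bookkeeping slip to fix. Since $\sqrt\La=a_c-a\approx d/2$, the factor $\sqrt\La/\sqrt d$ is not bounded. The correct facts are $\sqrt\La/n\to1$ and that $\frac{\sqrt\La}{\sqrt d}\,z$ is of order one because $z=O(d^{-1/2})$ under $\mu_0$; this is exactly why the $\sigma$ Gram entry $\frac{n(n+2)(d-1)}{d(2n+1)(2n+3)}$ stays bounded below. Similarly, the tilt density must be controlled through the single direction $\sqrt d\,X\cdot\tau/|\tau|$, not through $\sqrt d\,|X|$, which would give a non-uniform factor $e^{C|\tau|\sqrt d}$.
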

\begin{proof}
Beta integrals and spherical moments show that all unit-direction linear combinations of $\sqrt d\,z$ and $\sqrt d\,X_i$ have uniform sub-Gaussian bounds. Indeed, their even moments are bounded by $C^k(2k-1)!!$, since $2n+2\asymp d$ and
\[
 d^k\frac1{|\S^{d-1}|}\int_{\S^{d-1}}(\omega\cdot e)^{2k}\rd\omega
 \le(2k-1)!!\qquad(|e|=1).
\]
For small fixed $|\tau|$, the denominator $1-\tau\cdot X/\sqrt d$ is at least $1/2$. The density change is bounded by 
    \[C_a\exp(C_a|\tau|\,|\sqrt d X\cdot e|),\] 
and the same moment bounds therefore hold after tilting. Differentiating \eqref{eq:fs-profiles} shows that its first three mixed logarithmic derivatives, in arbitrary unit parameter directions, have uniformly bounded fixed-order moments under the tilted measure.

Direct differentiation, using \eqref{eq:fs-parameter-relations}, gives
\be\label{eq:fs-profile-equation}
 \mathcal A\phi=P_\tau(\omega)\phi^{q-1},\qquad
 P_\tau=F^{1/(n+1)}\left[
 1-\frac{n^2|\tau|^2}{d\La}
 +\frac{n^2/\La-1}{d}(\tau\cdot\omega)^2\right],
\ee
with 
    \[F:=\int\Phi_{0,\tau}^q\rd\nu_1.\]
Differentiating this identity up to order three and using the preceding moments bounds both $D^k\phi/\phi$ and $\mathcal A(D^k\phi)/\phi^{q-1}$ in fixed-order tilted norms. Integration by parts then bounds $\mathsf E(D^k\phi)$, for $k\le3$. These bounds concern entire unit directions and do not require summing over $d$ coordinates.

At $\mathbf p=(1,0,0)$ the energy Gram matrix $G(\mathbf p)$ is diagonal, with entries
\[
 1,\qquad
 \frac{n(n+2)(d-1)}{d(2n+1)(2n+3)},\qquad
 \frac{2n(n+1)(n+2)}{d^2(2n+3)}
 \quad\text{(the last entry repeated $d$ times)}.
\]
They are bounded above and below independently of $d$. The derivative estimates preserve these bounds on a fixed ball. The quantitative implicit function theorem, applied to the energy-orthogonality equations for the residual in \eqref{eq:fs-chart-decomposition}, gives the stated decomposition with a uniform radius. Differentiating \eqref{eq:fs-profile-equation} once and applying the uniform Gram inverse gives \eqref{eq:fs-tangent-moment}; again this estimates a whole tangent direction rather than individual coordinates.
\end{proof}

\begin{lemma}\label{lem:fs-profile}
For $|\tau|\le t_0(a)$ and sufficiently large $d$,
\be\label{eq:fs-profile-estimates}
 c_a\frac{|\tau|^4}{d^3}\le\mathsf D(\phi)
 \le C_a\frac{|\tau|^4}{d^3},\qquad
 \|R\|_*\le C_a\frac{|\tau|^2}{d^3},\qquad
 \frac{\mathcal A\phi}{\phi}>0.
\ee
The estimates are uniform as $\tau\to0$, and
\be\label{eq:fs-density-bound}
 \int\left(\frac{\rd\mu_0}{\rd\mu_\phi}\right)^2\rd\mu_\phi\le C_a.
\ee
\end{lemma}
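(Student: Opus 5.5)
Everything will be reduced to explicit one-dimensional Beta integrals under $\mu_0$, using the profile equation \eqref{eq:fs-profile-equation}. Write $\Phi=\Phi_{0,\tau}$ and $F=\int\Phi^q\rd\nu_1$, so that $\phi=F^{-1/q}\Phi$ and $\nm{\phi}_{q,\nu_1}=1$.

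\textbf{Step 1: the deficit.} Integrating \eqref{eq:fs-profile-equation} against $\phi$ gives
\[
\mathsf D(\phi)=\mathsf E(\phi)-1=\int(P_\tau-1)\rd\mu_\phi .
\]
With the substitution $z=\tanh(\sqrt\La t/n)$ one has $\cosh^{-1}=\sqrt{1-z^2}$. Hence $\Phi^q/V^q=(1-\tau\cdot X/\sqrt d)^{-nq}$ with $nq=2n+2$, and
\[
F=\int(1-\tau\cdot X/\sqrt d)^{-(2n+2)}\rd\mu_0 .
\]
I expand this in powers of $\tau\cdot X/\sqrt d$. The second moment of $X$ under $\mu_0$ is $\frac{1}{2n+3}$ per coordinate, so the quadratic term of $F$ is
\[
\frac{(2n+2)(2n+3)}{2}\,\frac{|\tau|^2}{d}\cdot\frac{1}{2n+3}=(n+1)\frac{|\tau|^2}{d}.
\]
I then combine this with the prefactor $F^{1/(n+1)}$, with the bracket in $P_\tau$ (using $n^2/\La=(2n+1)/(d-1)$), and with the reweighting $\rd\mu_\phi=F^{-1}(\Phi/V)^q\rd\mu_0$. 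The aim is to check that the $|\tau|^2$ terms cancel exactly; this cancellation is the reason $\phi$ is built as it is. The quartic coefficient should then reduce to the factor $(n+1-d/2)/(n+1)\asymp_a d^{-1}$ times $|\tau|^4/d^2$. This reproduces the logarithmic expansion quoted in the introduction. Note that $\log\mathsf E(\phi)\approx\mathsf D(\phi)$, and that $-\log(1-x)-x\asymp x^2$ for $x=|\tau|^2/d$.

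To make this rigorous I would use the sub-Gaussian moment bounds from the proof of Lemma~\ref{lem:fs-chart} to control the Taylor remainder uniformly in $d$. The remainder is $O(|\tau|^6/d^3)$ times the small factor, or at worst $O_a(|\tau|^4/d^4)$. Taking $t_0(a)$ small and $d$ large then gives the two-sided bound $\mathsf D(\phi)\asymp_a|\tau|^4/d^3$.

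\textbf{Step 2: residual and positivity.} Since $R=(P_\tau-1)\phi^{q-1}$, for any $u$ with $\mathsf E(u)=1$ Hölder gives
\[
|R(u)|\le\Big(\int|P_\tau-1|^{q'}\phi^{q}\rd\nu_1\Big)^{1/q'}\nm{u}_{q,\nu_1},
\]
and $\nm{u}_{q,\nu_1}\le1$. I would rather split $P_\tau-1=(\bar P-1)+(P_\tau-\bar P)$, where $\bar P=\int P_\tau\rd\mu_\phi=1+\mathsf D(\phi)$. The constant part contributes $\mathsf D(\phi)\,|\int\phi^{q-1}u|\lesssim|\tau|^4/d^3$. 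The fluctuating part $P_\tau-\bar P$ is, up to the constant $F^{1/(n+1)}\approx1$, the centered quadratic
\[
\frac{n^2/\La-1}{d}\Big((\tau\cdot\omega)^2-\int(\tau\cdot\omega)^2\rd\mu_\phi\Big),
\]
with coefficient
\[
\frac{n^2/\La-1}{d}=\frac{2n+2-d}{d(d-1)}\asymp_a d^{-2}.
\]
Its $L^2(\mu_\phi)$ norm is about $d^{-2}\cdot|\tau|^2/d=|\tau|^2/d^3$, because $(\tau\cdot\omega)^2$ has fluctuation of order $|\tau|^2/d$. Applying Cauchy--Schwarz, together with the bound $\int\phi^{q-2}u^2\rd\nu_1\le\mathsf E(u)$ from the CKN inequality and Hölder, gives $\|R\|_*\le C_a|\tau|^2/d^3$.

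Positivity of $\mathcal A\phi/\phi=P_\tau\phi^{q-2}$ is immediate. Indeed $P_\tau\ge F^{1/(n+1)}\bigl(1-O_a(|\tau|^2/d)\bigr)>0$, since $n^2/(d\La)\approx 1/2$ after using \eqref{eq:fs-parameter-relations}.

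\textbf{Step 3: the density bound.} We have
\[
\frac{\rd\mu_0}{\rd\mu_\phi}=F\,(1-\tau\cdot X/\sqrt d)^{2n+2},
\]
so \eqref{eq:fs-density-bound} equals
\[
F\int(1-\tau\cdot X/\sqrt d)^{2n+2}\rd\mu_0 .
\]
Both factors are bounded by $\int\exp\bigl(C|\tau|\,|\sqrt d\,X\cdot e|\bigr)\rd\mu_0\le C_a$, using $(2n+2)/d\le2$ and the uniform sub-Gaussian bounds from Lemma~\ref{lem:fs-chart}.

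\textbf{Main obstacle.} The delicate point is the exact cancellation at order $|\tau|^2$ in Step~1, together with identifying a strictly positive quartic coefficient of exact order $d^{-3}$. This is what gives the lower bound $c_a|\tau|^4/d^3$. Here I would compute $\log F$ exactly via the Beta/Gegenbauer moment formula,
\[
\int(\tau\cdot X)^{2k}\rd\mu_0=\frac{(2k-1)!!\,|\tau|^{2k}}{(2n+3)(2n+5)\cdots(2n+2k+1)} .
\]
I would then track the expansion to fourth order and bound the tail by the sub-Gaussian estimates.
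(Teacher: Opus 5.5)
Your Steps 2 and 3 are essentially the paper's arguments: the residual is written as a centred quadratic in $\tau\cdot\omega$ times $\phi^{q-1}$, and the density bound goes through $F\int(1-\tau\cdot X/\sqrt d)^{2n+2}\,\rd\mu_0$. The only slip there is that $n^2/(d\La)=(2n+1)/(d(d-1))\approx 1/d$, not $1/2$; this does not affect positivity. The gap is in Step 1, which is the substance of the lemma, and Step 2 also relies on it when you replace $\bar P$ by $1$. First, the moment formula you intend to use is off by exactly the relative size being measured. The correct formula is $\int(\tau\cdot X)^{2k}\,\rd\mu_0=|\tau|^{2k}\frac{(1/2)_k(n+1)_k}{(d/2)_k(n+3/2)_k}$. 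Hence $\int X_i^2\,\rd\mu_0=\frac{2n+2}{d(2n+3)}$ and $F={}_2F_1(n+1,n+1;d/2;\zeta)$ with $\zeta=|\tau|^2/d$. You dropped the factor $(n+1)_k/(d/2)_k=1+O(k\chi/d)$, where $\chi=n+1-d/2\to-2a$. With your coefficient $(n+1)\zeta$ for the quadratic term of $F$, the order-$\zeta$ part of $\int(P_\tau-1)\,\rd\mu_\phi$ comes out as $-2\chi\zeta/d\neq0$, which is a negative deficit. With the correct coefficient $2(n+1)^2\zeta/d$ it vanishes, since $\frac{2(n+1)}{d}-\frac{2n+1}{d-1}+\frac{2n+2-d}{d(d-1)}=0$. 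The same relative precision $d^{-1}$ is needed at order $\zeta^2$, where the individual contributions are $O(1)$ and the answer has size $\chi/d$.

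Second, and this is the missing idea, uniform sub-Gaussian moment bounds cannot give the remainder you claim. Both $\frac1{n+1}\log F$ and the logarithm of the bracket in $P_\tau$ have size $|\tau|^2/d$. Tilted sub-Gaussian bounds control the higher $|\tau|$-derivatives of $\log F$ only by $O(1)$. After dividing by $n+1$, the Taylor remainder beyond fourth order is therefore only bounded by $C|\tau|^6/d$. This exceeds the main term $|\tau|^4/d^3$ as soon as $d\gg|\tau|^{-1}$, so nothing uniform on $|\tau|\le t_0(a)$ follows. The true higher-order terms are $O(|\tau|^6/d^3)$, but that reflects cancellations of relative order $d^{-2}$ which sub-Gaussianity does not see. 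The paper gets all orders at once from exact hypergeometric structure. One has $\mathsf E(\Phi_{0,\tau})={}_2F_1(n,n+1;d/2;\zeta)$, and Euler's transformation gives $\mathsf E(\Phi_{0,\tau})=(1-\zeta)^{-d/2-2\chi+1}G_1$ and $F=(1-\zeta)^{-d/2-2\chi}G$, with $G={}_2F_1(-\chi,-\chi;d/2;\zeta)$ and $G_1=G-\zeta G'/\chi$. Because $2/q=n/(n+1)$, the powers of $1-\zeta$ combine exactly into $\frac{\chi}{n+1}\bigl(-\log(1-\zeta)\bigr)$. Meanwhile $G$ has nonnegative coefficients and linear coefficient $2\chi^2/d$, and Gauss's formulas for $G(1)$ and $G'(1)$ bound its degree-$\ge2$ tail by $C_ad^{-2}$. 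This yields $\log\mathsf E(\phi)=\frac{\chi}{n+1}\bigl[-\log(1-\zeta)-\zeta\bigr]+O_a(\zeta^2/d^2)$, and hence both bounds on $\mathsf D(\phi)$. Your reduction $\mathsf D(\phi)=\int(P_\tau-1)\,\rd\mu_\phi$ is a legitimate starting point. It still needs this kind of exact control, including an exact formula for $\int\omega_d^2\,\rd\mu_\phi$.
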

\begin{proof}
Rotate so that $\tau=|\tau|e_d$ and put 
    \[\zeta=|\tau|^2/d \quad\text{and} \quad \chi=n+1-d/2.\] 
Then $\chi\to-2a>0$. The binomial series and the beta and spherical moment identities recorded in \cite[equation~(4.6)]{Frank-Peteranderl=2024} give the following formulas; see also \eqref{eq:fs-trial-moments} below with $\varepsilon=|\tau|/\sqrt d$:
\[
 e(\zeta):=\mathsf E(\Phi_{0,\tau})
 =\leftidx{_2}F_1(n,n+1;d/2;\zeta),\qquad
 F:=\int\Phi_{0,\tau}^q\rd\nu_1=\leftidx{_2}F_1(n+1,n+1;d/2;\zeta).
\]
Euler's transformation yields
\[
 e=(1-\zeta)^{-d/2-2\chi+1}G_1,\qquad 
 F=(1-\zeta)^{-d/2-2\chi}G,
\]
with
    \[G(\zeta):=\leftidx{_2}F_1(-\chi,-\chi;d/2;\zeta),\qquad  G_1:=G-\zeta G'/\chi.\]
The coefficients of $G$ are nonnegative. Gauss's formula for $G(1)$ and $G'(1)$ bounds the sums of the coefficients of degree at least two, including their degree-weighted sum, by $C_ad^{-2}$, uniformly for $\chi$ in the compact interval arising here. Therefore
\[
 \log G=\frac{2\chi^2}{d}\zeta+O_a(\zeta^2/d^2),\qquad
 \log G_1=\frac{2\chi(\chi-1)}d\zeta+O_a(\zeta^2/d^2).
\]
The linear terms cancel exactly in the normalized energy:
\be\label{eq:fs-profile-log-energy}
 \log\mathsf E(\phi)
 =\log e-\frac n{n+1}\log F
 =\frac\chi{n+1}\bigl[-\log(1-\zeta)-\zeta\bigr]
   +O_a(\zeta^2/d^2).
\ee
Since $\mathsf D(\phi)=\mathsf E(\phi)-1$, this proves the two-sided deficit estimate.

Subtracting the Rayleigh multiplier in \eqref{eq:fs-profile-equation} gives
\[
 \mathcal A\phi-\mathsf E(\phi)\phi^{q-1}
 =F^{1/(n+1)}\left(\frac{n^2}{\La}-1\right)\zeta
 \left(\omega_d^2-\int\omega_d^2\rd\mu_\phi\right)\phi^{q-1}.
\]
Here $n^2/\La-1=O_a(d^{-1})$ and
$\int\omega_d^4\rd\mu_\phi\le C_ad^{-2}$ by the tilted moments above. Since $q'\le2$, the sharp embedding \eqref{CKN-C-1} and H\"older's inequality bound the dual norm of this expression by $C_a|\tau|^2/d^3$. Replacing $\mathsf E(\phi)$ by $1$ costs at most $\mathsf D(\phi)$ in dual norm, proving the residual estimate. Positivity follows directly from \eqref{eq:fs-profile-equation} for small fixed $|\tau|$.

Finally, the left side of \eqref{eq:fs-density-bound} equals
\[
 F\int(1-\tau\cdot X/\sqrt d)^{2(n+1)}\rd\mu_0.
\]
Both factors are bounded by the same exponential-moment estimates used in Lemma~\ref{lem:fs-chart}. This proves the density comparison.
\end{proof}

\subsection{Uniform transverse forms and angular spectral cutoffs}

Coercivity on the orthogonal complement of the full Hessian kernel underlies the second projection in \cite[Proposition~3 and Section~3.2]{Frank-Peteranderl=2024}. We quantify this coercivity for the approximate profiles, keeping the dimension dependence explicit.
For the normalized profile $\phi=\phi_{0,\tau}$, set
\[
 B_\phi(v,w):=\langle v,w\rangle_{\La,\nu_1}
 -(q-1)\int_\CR\phi^\theta vw\rd\nu_1.
\]
All constants in the next three lemmas are independent of $d$.
The parameter radius may be decreased and the lower dimension threshold
increased a finite number of times.

\begin{lemma}\label{lem:fs-forms}
There are $b_0,B_0>0$, depending only on $a$, such that, on a fixed
sufficiently small parameter ball,
\begin{align}
 B_\phi(v,v)&\ge b_0\theta\mathsf E(v)
 &&\text{if }v\perp_{\La,\nu_1}\mathcal T_{0,\tau},
 \label{eq:fs-transverse-form}\\
 \|B_\phi(w,\cdot)\|_*&\le B_0\theta\mathsf E(w)^{1/2}
 &&\text{if }w\in\mathcal T_{0,\tau}.
 \label{eq:fs-tangent-form}
\end{align}
For every $v=Vf$, write
\[
 H^2:=\int_\CR f^2\rd\mu_0,\qquad
 G^2:=\int_\CR\left((1-z^2)|\partial_zf|^2
 +\frac{n^2}{\La(1-z^2)}|\nabla_\omega f|^2\right)\rd\mu_0.
\]
Then
\begin{equation}\label{eq:fs-form-comparison}
 \mathsf E(v)=H^2+\frac{G^2}{n(n+1)},\qquad
 \int_\CR\phi^\theta v^2\rd\nu_1
 \le H^2+C_a|\tau|\left(\frac{G^2}{n(n+1)}+\theta H^2\right).
\end{equation}
\end{lemma}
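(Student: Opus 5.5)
I would first prove the identity \eqref{eq:fs-form-comparison} at $\tau=0$ and then deduce the two coercivity statements from it by perturbation in $\tau$.

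\textbf{The identity.} Write $v=Vf$. Since $V$ solves $\mathcal AV=V^{q-1}$ and $\int V^q\rd\nu_1=1$, integration by parts gives the ground-state representation
\[
\mathsf E(v)=\int V^q f^2\rd\nu_1+\int V^2\bigl(|\pa_tf|^2+|\nabla_\om f|^2\bigr)\rd\nu_1 .
\]
Now change variables to $z=\tanh(\sqrt\La t/n)$. On the FS curve $(q-2)/2=1/n$, so $V^q\propto(1-z^2)^{n+1}$ and $\rd t\propto n\,\rd z/(\sqrt\La(1-z^2))$. The first term is $H^2$. In the second term, $V^2=V^q\cdot V^{-\theta}$, which produces the factor $(1-z^2)^{-1}$. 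A direct bookkeeping of the constants, using $V(0)^\theta=q\La/2=\La(n+1)/n^2$, should yield exactly the prefactor $1/(n(n+1))$ in front of $G^2$.

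For the second part of \eqref{eq:fs-form-comparison}, write $\phi^\theta/V^\theta=N^{\theta}(1-\tau\cdot X/\sqrt d)^{-2}$, where $N$ is the normalization and $X=\sqrt{1-z^2}\,\om$. Expand this ratio as $1+O(|\tau|\,|X|/\sqrt d)+O(\theta|\tau|)$. The linear term $\tau\cdot X/\sqrt d$ I would bound by the weighted Hardy/Poincaré estimate
\[
\int f^2(1-z^2)^{1/2}|\om\cdot e|\,\rd\mu_0\lesssim \frac{G^2}{n(n+1)}\sqrt d+\theta H^2\sqrt d,
\]
obtained by Cauchy--Schwarz against $(1-z^2)$ and the spectral decomposition in $\om$ (degree-zero versus degree-$\ge1$ harmonics). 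The point to exploit is that the degree-one projection costs $G^2/(n(n+1))\gtrsim \theta H^2$.

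\textbf{Transverse coercivity.} At $\tau=0$, $B_V$ is diagonalized by the modes $\mu_{i,j}$ of \eqref{mu-ij}. The kernel $\mathcal T_{0,0}$ consists of $\mu_{0,0},\mu_{0,1},\mu_{1,0}$ (all equal to $q-1$ or $1$). The next eigenvalues are $\mu_{0,2}$, $\mu_{1,1}$ and $\mu_{2,0}$. Using $q=2s$ and \eqref{mu-ij} I would check that
\[
\frac{\mu-(q-1)}{\mu\,\theta}\ge c_a\quad\text{for all these modes,}
\]
which gives $B_V(v,v)\ge b\theta\,\mathsf E(v)$ for $v\perp\mathcal T_{0,0}$.

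\textbf{Perturbation to $\tau\neq0$.} First, $\mathcal T_{0,\tau}$ is $C_a|\tau|$-close to $\mathcal T_{0,0}$ in energy norm, uniformly in unit directions, by the derivative bounds of Lemma~\ref{lem:fs-chart}. Second, $|B_\phi-B_V|(v,v)\le C_a|\tau|\bigl(G^2/(n(n+1))+\theta H^2\bigr)$ by the comparison just proved. This is the delicate point: the error must be $O(|\tau|\theta\,\mathsf E)$ rather than $O(|\tau|\,\mathsf E)$.

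So I must show that $G^2/(n(n+1))+\theta H^2\lesssim\theta\,\mathsf E(v)$ after removing kernel components. This fails for pure $H$-mass in degree $0$, so I would split $v$. The radial low modes are handled via the tangent-space orthogonality. For the rest I would use $\mathsf E-(q-1)\int V^\theta v^2\ge c\theta\,\mathsf E$, and absorb the error by shrinking $|\tau|$.

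\textbf{Tangent estimate \eqref{eq:fs-tangent-form}.} For $w\in\mathcal T_{0,\tau}$, differentiating the profile equation \eqref{eq:fs-profile-equation} in the parameters gives
\[
\mathcal Aw-(q-1)\phi^{\theta}w=\text{(terms involving }P_\tau'\text{ and }R\text{)}.
\]
The $\tau$-derivatives of $P_\tau$ carry $n^2/\La-1=O(d^{-1})=O(\theta)$. Amplitude and translation directions give exact multiples of $\theta\phi^{q-1}$ or zero. Dual norms are then bounded by H\"older together with \eqref{eq:fs-tangent-moment}.

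The main obstacle is obtaining the extra factor $\theta$ in the perturbation error for \eqref{eq:fs-transverse-form}.
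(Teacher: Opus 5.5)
\textbf{Where the proposal agrees with the paper, and where it stops.} Your spectral analysis at $\tau=0$ matches the paper: the gap is $2/(n+3)$, at the level $\eta+j=n+2$ shared by $\mu_{0,2},\mu_{1,1},\mu_{2,0}$. Your plan to transport this gap to $v\perp_{\La,\nu_1}\mathcal T_{0,\tau}$ via the $C_a|\tau|$ angle to $L=\operatorname{span}\{V,Vz,VX_1,\ldots,VX_d\}$ also matches, and the only loss there is $O(\theta|\tau|^2\mathsf E(v))$, coming from the negative direction $V$. Your plan for \eqref{eq:fs-tangent-form} is likewise the paper's. The genuine gap is the step you flag as the main obstacle, passing from $B_V$ to $B_\phi$, and the target you set for it is wrong.

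\textbf{The transverse perturbation.} You do not need the error $C_a|\tau|\bigl(G^2/(n(n+1))+\theta H^2\bigr)$ to be $O(|\tau|\theta\mathsf E(v))$. That bound is false in general: it fails for high modes, where $G^2/(n(n+1))$ is comparable to $\mathsf E(v)$. It does not fail for pure $H$-mass, where it is trivial. What is needed is a bound by $|\tau|B_V(v,v)$, and it follows from the identity you already have. Since $\int V^\theta v^2\rd\nu_1=H^2$, one has exactly $B_V(v,v)=G^2/(n(n+1))-\theta H^2$. The transported gap gives $B_V(v,v)\ge b\theta\mathsf E(v)\ge b\theta H^2$. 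Hence $G^2/(n(n+1))+\theta H^2=B_V(v,v)+2\theta H^2\le(1+2/b)B_V(v,v)$. The one-sided comparison then gives $B_\phi(v,v)\ge(1-C_a|\tau|)B_V(v,v)$, so a fixed, dimension-independent radius suffices. Your alternative (split off radial low modes, absorb the rest against $c\theta\mathsf E$) would force $|\tau|\lesssim\theta\sim d^{-1}$ on the high modes. It would also create cross terms in $\int(\phi^\theta-V^\theta)v^2\rd\nu_1$, which is not diagonal in modes, and you do not control these.

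\textbf{The comparison inequality.} Your justification of the second part of \eqref{eq:fs-form-comparison} is also inadequate as sketched. The absolute-value bound $\int f^2(1-z^2)^{1/2}|\om\cdot e|\rd\mu_0\lesssim\sqrt d\bigl(G^2/(n(n+1))+\theta H^2\bigr)$ is true, for instance by log-Sobolev/Herbst concentration on $\S^{d-1}$. But splitting $f$ into its angular mean and the degree-$\ge1$ part $f_\perp$ does not prove it: $\int f_\perp^2|\om\cdot e|\le\|f_\perp\|^2\lesssim d^{-1}\|\nabla_\om f\|^2$ is off by a factor $\sqrt d$.

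The paper keeps the sign instead.
\begin{itemize}
\item Since $F\ge1$, one has $\phi^\theta/V^\theta\le1+2\tau\cdot X/\sqrt d+C|\tau|^2/d$.
\item On the FS curve, $X\cdot e$ is an eigenfunction, with eigenvalue $2n+2$, of the diffusion whose Dirichlet form is $G^2$; this is where $n^2(d-1)/\La=2n+1$ enters. Its carr\'e du champ is at most $n^2/\La\le3$.
\item One integration by parts gives $|\int(X\cdot e)f^2\rd\mu_0|\le\frac{2\sqrt3}{2n+2}HG$.
\item Young's inequality with $n\asymp d$ then finishes.
\end{itemize}

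\textbf{A minor correction.} $V(0)^\theta=q\La/2=(n+1)\La/n$, not $(n+1)\La/n^2$. With the correct value, the ground-state computation gives exactly the factor $1/(n(n+1))$.
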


\begin{proof}
The energy identity follows from the ground-state transformation at $V$.
The positive diffusion associated with the Dirichlet form $G^2$ has
eigenfunctions $z$ and $X\cdot e$, for every unit vector $e\in\R^d$,
at eigenvalue $2n+2$. The squared gradient of $X\cdot e$ in this form is
bounded by $n^2/\La\le3$. Integration by parts and Cauchy--Schwarz give
\[
 \left|\int_\CR(X\cdot e)f^2\rd\mu_0\right|
 \le\frac{2\sqrt3}{2n+2}HG.
\]
Recall from \eqref{eq:fs-profile-equation} that $F\ge1$ by symmetry
and convexity. The formula for the profile therefore yields
\[
 \frac{\phi^\theta}{V^\theta}
 =F^{-1/(n+1)}\left(1-\frac{\tau\cdot X}{\sqrt d}\right)^{-2}
 \le1+\frac{2\tau\cdot X}{\sqrt d}
       +\frac{C|\tau|^2}{d}.
\]
Consequently,
\[
 \int_\CR\phi^\theta v^2\rd\nu_1-H^2
 \le C\left(\frac{|\tau|}{\sqrt d(2n+2)}HG
                         +\frac{|\tau|^2}{d}H^2\right).
\]
Since $n\asymp_a d$ and $\theta=2/n$, Young's inequality proves
\eqref{eq:fs-form-comparison}. This averaged estimate retains the
coercivity scale $\theta$.

At $\tau=0$, the kernel of the rank-corrected half Hessian, identified in \cite[Section~3.1, equation~(3.1)]{Frank-Peteranderl=2024}, takes the following form in our variables:
\[
 L:=\operatorname{span}\{V,Vz,VX_1,\ldots,VX_d\}.
\]
Separation of variables gives the next gap in the energy metric as
$2/(n+3)$. Indeed, the eigenvalue of the radial branch $(i,j)=(0,2)$ is
$(n+2)(n+3)/(n(n+1))$, and every remaining angular mode has a
spectral parameter at least $n+2$. The uniform chart in
Lemma~\ref{lem:fs-chart} shows that the angle between $L$ and
$\mathcal T_{0,\tau}$ is at most $C_a|\tau|$ in operator norm.
Thus, if $v\perp_{\La,\nu_1}\mathcal T_{0,\tau}$ and $P_L$ denotes
energy projection onto $L$,
\[
 \mathsf E(P_Lv)\le C_a|\tau|^2\mathsf E(v),\qquad
 |\langle v,V\rangle_{\La,\nu_1}|^2\le\mathsf E(P_Lv).
\]
The form without the rank correction therefore satisfies
\begin{align*}
 B_V(v,v)
 &\ge\frac2{n+3}\bigl(\mathsf E(v)-\mathsf E(P_Lv)\bigr)
       -\theta|\langle v,V\rangle_{\La,\nu_1}|^2\\
 &\ge\left(\frac2{n+3}
       -C_a|\tau|^2\left(\frac2{n+3}+\theta\right)\right)\mathsf E(v).
\end{align*}
The ratio $(2/(n+3))/\theta=n/(n+3)$ is bounded below by $2/5$.
For a fixed small radius this gives $B_V(v,v)\ge c_a\theta\mathsf E(v)$.
Moreover,
\[
 \theta H^2\le C_a B_V(v,v),\qquad
 \frac{G^2}{n(n+1)}=B_V(v,v)+\theta H^2\le C_aB_V(v,v).
\]
The signed perturbation estimate in \eqref{eq:fs-form-comparison}
can now be absorbed by decreasing the radius. This proves
\eqref{eq:fs-transverse-form} for $B_\phi$ itself.

For \eqref{eq:fs-tangent-form}, differentiate the profile equation
\eqref{eq:fs-profile-equation}, written
$\mathcal A\phi=P_\tau\phi^{q-1}$.
Its explicit coefficient satisfies
\[
 \|P_\tau-1\|_\infty\le C_a\theta|\tau|^2,
 \qquad \|D_eP_\tau\|_{L^r(\mu_\phi)}\le C_{a,r}\theta
 \quad(|e|=1)
\]
for every fixed finite $r$. The axial derivative of $P_\tau$ is zero.
For a parameter derivative $w=D_e\phi$, the differentiated equation gives
\[
 B_\phi(w,v)=\int_\CR
 \bigl[(D_eP_\tau)\phi^{q-1}
       +(q-1)(P_\tau-1)\phi^\theta D_e\phi\bigr]v\rd\nu_1.
\]
For the amplitude direction,
$B_\phi(\phi,v)=\int(P_\tau-(q-1))\phi^{q-1}v\rd\nu_1$.
The fixed-order profile moments from Lemma~\ref{lem:fs-chart},
H\"older's inequality, and $\|v\|_{q,\nu_1}^2\le\mathsf E(v)$
bound these functionals by $C_a\theta\mathsf E(v)^{1/2}$ for a
whole unit parameter direction. The uniform Gram inverse in that lemma
passes to every energy-unit vector in $\mathcal T_{0,\tau}$, with
no sum over its $d$ angular directions.
\end{proof}

\begin{lemma}\label{lem:fs-lowmom}
Fix an integer $K\ge1$ and use the angular cutoff $\mu_{K,0}$ of
\eqref{mu-ij}. For all sufficiently large $d$, depending on $a,K$,
there are at most $(K+1)(K+2)$ distinct eigenvalues below this cutoff.
For every entire eigenspace $E_\mu$ with $\mu\le\mu_{K,0}$,
\begin{equation}\label{eq:fs-low-moment}
 \int_\CR V^{q-4}v^4\rd\nu_1
 \le D_K\left(\int_\CR V^\theta v^2\rd\nu_1\right)^2
 \quad(v\in E_\mu),
\end{equation}
where the dimension-independent constant may be chosen as
\[
 D_K:=16^{K+2}\left(1-\left(\frac34\right)^{K+1}\right)^2
             \frac{(2K+3)!}{4^{K+1}((K+1)!)^2}.
\]
In addition,
\begin{equation}\label{eq:fs-angular-gap}
 \mu_{K,0}-1\ge K\theta.
\end{equation}
\end{lemma}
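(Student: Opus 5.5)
The plan is to run the argument of Lemma \ref{lemma:large eigenfunction} again on the FS curve. The only change is in the cutoff bookkeeping: the constant $M_K$ from \eqref{eq:angular-cutoff-size} depended on $1+a-b$, which is no longer a fixed quantity here, so we replace it by $K+1$, valid for large $d$.

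\textbf{Step 1: bound $\eta_K-\eta_0$.} On the FS curve $q=2s$, so $\eta_0=2/(q-2)=n$. Also
\[
\eta_i=n\sqrt{1+i(i+d-2)/\La}.
\]
Using $\La/n^2=(d-1)/(2n+1)$ from \eqref{eq:fs-parameter-relations},
\[
\eta_K^2-n^2=\frac{K(K+d-2)(2n+1)}{d-1}.
\]
Since $2n+1\sim d$, this gives $\eta_K^2=n^2+K(2n+1)+O_{a,K}(1)$. Hence
\[
\eta_K-\eta_0=K+O_{a,K}(d^{-1})<K+1
\]
for $d$ large depending on $a,K$.

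\textbf{Step 2: restrict the admissible indices.} Combine this with \eqref{eq:radialcut} and the lower bound \eqref{eq:eta-lower}. For \eqref{eq:eta-lower} I would recheck its hypotheses at $\La=\La_{FS}$: the inequality $4/((q-2)^2\La)>1$ still holds because $\La/n^2<1$ for large $d$. The bound gives $\eta_i\ge n+i$. So $\mu_{i,j}\le\mu_{K,0}$ forces $i\le K$ and $i+j\le\eta_K-\eta_0<K+1$, i.e. $i+j\le K$.

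\textbf{Step 3: count eigenvalues.} The number of such pairs is $(K+1)(K+2)/2\le(K+1)(K+2)$, which is the claimed count.

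\textbf{Step 4: the moment bound \eqref{eq:fs-low-moment}.} Every eigenfunction has the grouped form \eqref{eq:grouped}. Each radial coefficient has $\eta_{i}+j_i\le\eta_K<n+K+1$, so Lemma \ref{lem:radial} applies with $M=K+1$ and gives ratio $\le16^{j_i}b_{K+1}$. The angular factor comes from Duoandikoetxea's estimate \eqref{eq:angular}. Then apply Minkowski and Cauchy--Schwarz exactly as in \eqref{eq:combine}. Since $j_i\le K-i\le K+1-i$, the geometric sum is
\[
\sum_i 4^{j_i}3^i\le4^{K+2}\bigl(1-(3/4)^{K+1}\bigr).
\]
Squaring this and multiplying by $b_{K+1}=(2K+3)!/(4^{K+1}((K+1)!)^2)$ gives exactly $D_K$. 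The factor $\int V^q\rd\nu_1=1$ disappears, giving \eqref{eq:fs-low-moment}.

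\textbf{Step 5: the gap \eqref{eq:fs-angular-gap}.} Use the identity from the proof of Lemma \ref{lem:I2}, which is a pure consequence of \eqref{mu-ij}:
\[
\frac{\mu_{K,0}-1}{q-2}=\frac{(\eta_K-\eta_0)\bigl[4+(q-2)(\eta_K-\eta_0+1)\bigr]}{2q}.
\]
We have $q<3$ for large $d$, so the right side is $\ge\frac{4}{2q}(\eta_K-\eta_0)\ge\eta_K-\eta_0$. It remains to show $\eta_K-\eta_0\ge K$.

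This lower bound is the only delicate point, since Step 1 gives $\eta_K-\eta_0=K+O(1/d)$ and the correction could have either sign. I would use \eqref{eq:eta-lower} with $i=K$, which gives the strict bound $\eta_K>n+K$ directly, provided its bracket is positive. That requires
\[
\frac{4(d-1)}{(q-2)^2\La}>\frac{4}{q-2}+1,
\]
which reads $(d-1)/\La>2n+1$ with $n^2$ normalized. On the FS curve this is an equality ($n^2(d-1)/\La=(2n+1)n^2$), so the bracket's last term is zero. The remaining term $(K-1)(n^2/\La-1)$ is $\ge0$, since $n^2/\La=(2n+1)/(d-1)\ge1$ for large $d$ because $2n+2-d\to-4a>0$. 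Hence $\eta_K\ge n+K$, which closes Steps 2 and 5 with non-strict inequalities. I expect this boundary equality to be the step needing the most care; all other steps are transcriptions of Section \ref{sec:cylinder}.
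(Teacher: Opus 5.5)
Steps 1--4 are correct and follow the paper's proof essentially line by line. The same substitution $M=K+1$ is used, the same identity $\eta_i^2-(n+i)^2=i(i-1)(2n+2-d)/(d-1)\ge0$ is extracted from \eqref{eq:eta-lower}, and Lemma~\ref{lem:radial}, Duoandikoetxea's estimate and the Minkowski/Cauchy--Schwarz grouping are applied as in the paper. Your sharper restriction $i+j\le K$ only improves the count and the geometric sum.

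\textbf{The gap is in Step~5.} The inequality $\frac{4}{2q}(\eta_K-\eta_0)\ge\eta_K-\eta_0$ is false, since $\frac{4}{2q}=\frac2q<1$ for every $q>2$; the remark $q<3$ does not help. As written, your chain only gives $\mu_{K,0}-1\ge\frac2q K\theta$.

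This loss matters for the statement as formulated. At $K=1$ the claimed bound is an equality on the FS curve. Indeed $\eta_1=n+1$ gives $\mu_{1,0}=\frac{n+2}{n}=q-1$, i.e.\ $\mu_{1,0}-1=\theta$, which is the degeneracy itself. Your estimate gives only $\frac{n}{n+1}\theta$ there. So the term $(q-2)(\eta_K-\eta_0+1)$ you discarded is exactly what is needed.

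The repair is one line, using what you already proved. Since $\eta_K-\eta_0\ge K\ge1$, the bracket satisfies $4+(q-2)(\eta_K-\eta_0+1)\ge4+2(q-2)=2q$. Hence $\frac{\mu_{K,0}-1}{q-2}\ge\eta_K-\eta_0\ge K$; this is also how the corresponding step in Lemma~\ref{lem:I2} must be read.

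Equivalently, as the paper does, note that with $q-2=2/n$ formula \eqref{mu-ij} becomes $\mu_{i,j}=\frac{(\eta_i+j)(\eta_i+j+1)}{n(n+1)}$. Then $\eta_K\ge n+K$ gives
\[
\frac{\mu_{K,0}-1}{\theta}\ge\frac{(n+K)(n+K+1)-n(n+1)}{2(n+1)}=K+\frac{K(K-1)}{2(n+1)}\ge K .
\]
With this correction your proof is complete.
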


\begin{proof}
We use the separation into radial and spherical harmonic modes employed in \cite[Sections~3.1 and~4.2, equation~(4.8)]{Frank-Peteranderl=2024}, together with the moment estimates from Section~\ref{sec:cylinder}.
At the FS parameters the separation parameters from \eqref{mu-ij}
satisfy
\[
 \eta_i^2=n^2+\frac{n^2}{\La}i(i+d-2),\qquad
 \eta_i^2-(n+i)^2
   =\frac{i(i-1)(2n+2-d)}{d-1}\ge0.
\]
Here $\eta_0=n$ and $\eta_1=n+1$. Since $2n+2-d$ is bounded for
fixed $a$, for every fixed $K$ an increase of the dimension threshold gives
\[
 n+K\le\eta_K\le n+K+1.
\]
Thus
\begin{equation}\label{eq:fs-joint-cutoff}
 \mu_{i,j}\le\mu_{K,0}
 \quad\Longrightarrow\quad
 0\le i\le K,\qquad \eta_i+j\le n+K+1,\qquad i+j\le K+1.
\end{equation}
Counting pairs $(i,j)$ gives the stated upper bound on the number of
distinct eigenvalues; angular multiplicities are excluded from this count.

Lemma~\ref{lem:radial} applies with $M=K+1$ and $\eta=\eta_i$.
Its proof and \eqref{eq:radialratio} require only $\eta\ge n$, so
they apply also to the equality $\eta_1=n+1$ at the FS boundary.
For the radial function $f_i$ of degree $j_i$ in a fixed eigenspace,
the normalized fourth-to-second moment ratio is at most
\[
 16^{j_i}\frac{(2K+3)!}{4^{K+1}((K+1)!)^2}.
\]
Group the full angular multiplicity of degree $i$ into a single spherical
harmonic $Y_i$, as in \eqref{eq:grouped}. For a fixed eigenvalue there
is at most one $j_i$ for each $i$. The angular estimate
\eqref{eq:angular} applies to this entire $Y_i$. Minkowski's inequality
and Cauchy--Schwarz, precisely as in \eqref{eq:combine}, then give the
fourth-moment constant
\[
 \frac{(2K+3)!}{4^{K+1}((K+1)!)^2}
 \left(\sum_i4^{j_i}3^i\right)^2.
\]
By \eqref{eq:fs-joint-cutoff},
\[
 \sum_i4^{j_i}3^i\le4^{K+1}\sum_{i=0}^K\left(\frac34\right)^i
 =4^{K+2}\left(1-\left(\frac34\right)^{K+1}\right).
\]
This proves \eqref{eq:fs-low-moment}. All scaling factors cancel because
$\int V^q\rd\nu_1=1$. Finally,
\[
 \frac{\mu_{K,0}-1}{\theta}
 \ge\frac{(n+K)(n+K+1)-n(n+1)}{2(n+1)}
 =K+\frac{K(K-1)}{2(n+1)}\ge K,
\]
which proves \eqref{eq:fs-angular-gap}.
\end{proof}

\subsection{Truncation around an approximate profile}

For the difficulty of quartic expansions when $2<q<4$, and the use of a perturbative region and a tail region, see \cite[Section~4.1, Lemma~8 and equations~(4.1)--(4.5)]{Frank-Peteranderl=2024}. Here we use the three-range truncation of \cite[Section~2]{Dolbeault etc=2025} around an approximate profile, retaining its Euler--Lagrange residual and keeping the constants uniform in the dimension.

\begin{lemma}\label{lem:fs-robust}
There are $\delta,c,C>0$, depending only on $a$, such that, for
$\phi=\phi_{0,\tau}$ in a fixed sufficiently small parameter ball,
\[
 \phi+h\ge0,\qquad
 h\perp_{\La,\nu_1}\mathcal T_{0,\tau},\qquad
 \|h\|_{q,\nu_1}^2\le\delta
\]
imply
\begin{equation}\label{eq:fs-robust-deficit}
 \mathsf D(\phi+h)\ge\mathsf D(\phi)+c\theta\mathsf E(h)
             -\frac C\theta\|R\|_*^2.
\end{equation}
\end{lemma}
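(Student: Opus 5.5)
We plan to repeat the three-range truncation of Section~\ref{sec:local}, now centred at the approximate profile $\phi$ and with the residual $R$ carried along. First expand the energy. Since $h$ is energy-orthogonal to $\phi\in\mathcal T_{0,\tau}$,
\[
\mathsf E(\phi+h)=\mathsf E(\phi)+\mathsf E(h).
\]
For the $L^q$ part, apply Proposition~\ref{lemma-1} pointwise with $V$ replaced by $\phi$ and $r=h\ge-\phi$, using $r_1,r_2,r_3$ built from $\gamma\phi$ and $M\phi$. Integrating gives
\[
\int(\phi+h)^q\rd\nu_1\le \int\phi^q\rd\nu_1+q\int\phi^{q-1}h\rd\nu_1+\text{(the truncated quadratic and tail terms)}.
\]
The linear term is the new feature. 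We write
\[
\int\phi^{q-1}h\rd\nu_1=\langle\phi,h\rangle_{\La,\nu_1}-R(h)=-R(h),
\]
so $|R(h)|\le\|R\|_*\mathsf E(h)^{1/2}$. Since $\|\phi\|_{q,\nu_1}=1$, concavity of $x\mapsto x^{2/q}$ turns the expansion into
\[
\nm{\phi+h}_{q,\nu_1}^2\le 1-2R(h)+(\text{quadratic terms}).
\]
Young's inequality then gives
\[
2|R(h)|\le \tfrac{c}{2}\theta\,\mathsf E(h)+\tfrac{2}{c\theta}\|R\|_*^2,
\]
which is exactly the error term allowed in \eqref{eq:fs-robust-deficit}.

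The remaining task is to show $I_1+\dots+I_5\ge0$ with $V$ replaced by $\phi$, keeping a margin $c\theta\mathsf E(h)$. The cross terms $I_4,I_5$ reduce as before to $\int\phi^{q-1}r_2$ and $\int\phi^{q-1}r_3$. Because $\langle\phi,r_j\rangle_{\La,\nu_1}=\int(\mathcal A\phi/\phi)\,\phi\, r_j\,\phi^{q-2}$ and $\mathcal A\phi=P_\tau\phi^{q-1}$ with $\|P_\tau-1\|_\infty\le C_a\theta|\tau|^2$, these terms equal $\theta(\frac1q-\varepsilon_0)\int\phi^{q-1}r_j$ up to a relative error $O(\theta|\tau|^2)$. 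They therefore stay nonnegative on a small ball; positivity of $\mathcal A\phi/\phi$ from Lemma~\ref{lem:fs-profile} is used here. The tail term $I_3$ is handled verbatim by the sharp inequality \eqref{CKN-C-1}, since $\int\phi^{q-2}r_3^2\le\|r_3\|_q^2$.

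For $I_1$ we use the transverse coercivity \eqref{eq:fs-transverse-form} in place of the spectral gap. Decompose $r_1=\tilde r_1+w$ with $w\in\mathcal T_{0,\tau}$ its energy projection. Since $h\perp\mathcal T_{0,\tau}$, we have $w=-P_{\mathcal T}(r_2+r_3)$. By Lemma~\ref{lem:fs-chart} and \eqref{eq:fs-tangent-moment}, $\mathsf E(w)^{1/2}$ is bounded by
\[
\sup_{\mathsf E(\cdot)=1}\int\frac{\mathcal A(\cdot)}{\phi^{q-1}}\,\phi^{q-1}(r_2+r_3),
\]
and H\"older with fourth moments under $\mu_\phi$ bounds this by $C\mu_\phi(\{h>\gamma\phi\})^{1/4}(\int\phi^{q-2}(r_2+r_3)^2)^{1/2}$. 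The cross term $B_\phi(w,\tilde r_1)$ is controlled by \eqref{eq:fs-tangent-form} at scale $\theta$. The discriminant argument of Lemma~\ref{lem:I1} then closes, using $\sigma_0\theta Q_{23}$ and smallness of $\delta$.

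For $I_2$, follow Lemma~\ref{lem:I2} with Lemma~\ref{lem:fs-lowmom}. We project $r_2$ onto eigenspaces of the $V$-problem below $\mu_{K,0}$, using the moment bound \eqref{eq:fs-low-moment}, the bounded eigenvalue count and the gap \eqref{eq:fs-angular-gap}. To pass between the $V$ weight and the $\phi$ weight we use \eqref{eq:fs-form-comparison} for quadratic forms and \eqref{eq:fs-density-bound} for measures of level sets, e.g.
\[
\mu_0(\{h>\gamma\phi\})\le C_a^{1/2}\mu_\phi(\{h>\gamma\phi\})^{1/2}\le C(\delta/\gamma^2)^{q/4}.
\]
The main obstacle is this weight mismatch together with the lost orthogonality in $I_1$. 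Orthogonality holds to $\mathcal T_{0,\tau}$, but the spectral tools are built at $V$, and every transfer must cost only $O(|\tau|)$ relative to the scale $\theta$. I expect to get this by shrinking the parameter ball, with the Gram bound and the tilted sub-Gaussian moments of Lemma~\ref{lem:fs-chart} keeping all constants independent of $d$.
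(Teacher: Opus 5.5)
Your proposal is correct and follows the paper's proof essentially step for step. The shared steps are: the three-range truncation centred at $\phi$; the concavity step that produces the linear residual $2R(h)$, absorbed by Young's inequality; $I_1$ via the energy projection onto $\mathcal T_{0,\tau}$, using \eqref{eq:fs-tangent-moment}, \eqref{eq:fs-tangent-form} and the discriminant argument; $I_2$ via Lemma~\ref{lem:fs-lowmom} together with \eqref{eq:fs-form-comparison} and \eqref{eq:fs-density-bound}; and $I_3$ via the sharp inequality.

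The only deviation is in $I_4$ and $I_5$. The paper writes $\langle\phi,h_j\rangle_{\La,\nu_1}=\int\phi^{q-1}h_j\,\rd\nu_1+R(h_j)$ and pays $C\|R\|_*\mathsf E(h)^{1/2}$, using the ground-state bound $\mathsf E(h_j)\le\mathsf E(h)$; this is where positivity of $\mathcal A\phi/\phi$ enters. You instead use $\langle\phi,h_j\rangle_{\La,\nu_1}=\int P_\tau\phi^{q-1}h_j\,\rd\nu_1$ (your displayed formula with $\phi^{q-2}$ is a slip) together with $\|P_\tau-1\|_\infty\le C_a\theta|\tau|^2$. This gives the pointwise coefficient $\theta\bigl(1/q-\varepsilon_0+O(|\tau|^2)\bigr)\ge0$, so the error is relatively $O(|\tau|^2)$ rather than $O(\theta|\tau|^2)$. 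Your route is valid and avoids both the residual cost and the truncation-energy bound at that step.
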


\begin{proof}
Choose a small fixed $\varepsilon>0$. In this proof take
$\varepsilon_0=\varepsilon_1=\varepsilon$ in the three-range
decomposition: thus $\gamma=\varepsilon/2$, $M=1+\varepsilon$,
and take $\varepsilon_2,\sigma_0$ by \eqref{sigma-0} with
$\varepsilon_0=\varepsilon$. In particular,
$\varepsilon_2=(1-3\varepsilon)/4$ and
$\sigma_0=(1-3\varepsilon)/(2q)$ are uniformly positive.
Define the truncations as before, with $\phi$ in place of $V$:
\[
 h_1=\min\{h,\gamma\phi\},\qquad
 h_2=\min\{(h-\gamma\phi)_+,(M-\gamma)\phi\},\qquad
 h_3=(h-M\phi)_+.
\]
By homogeneity, Proposition~\ref{lemma-1} applies to every positive
profile, so \eqref{est-1} holds with these substitutions. Denote its
constant $C_{\gamma,\varepsilon_2,M}$ by $C_{\rm tail}$ in this proof.
It depends only on the fixed $\varepsilon$, uniformly for $2<q\le3$.

Concavity of $x^{2/q}$ at $\int\phi^q\rd\nu_1=1$ yields
\begin{equation}\label{eq:fs-concavity-remainder}
 \mathsf D(\phi+h)-\mathsf D(\phi)-2R(h)
 \ge\mathsf E(h)-\frac2q\int_\CR
       \bigl[(\phi+h)^q-\phi^q-q\phi^{q-1}h\bigr]\rd\nu_1.
\end{equation}
This retains the linear residual and requires no vanishing of
$\int\phi^{q-1}h\rd\nu_1$. The scalar estimate gives, as in
\eqref{eq:deficit-decomposition}, the lower bound
$\varepsilon\theta\mathsf E(h)+I_1+\cdots+I_5$ for the right side,
where all integrals in the following display use $\nu_1$:
\begin{align*}
 I_1={}&(1-\varepsilon\theta)\mathsf E(h_1)
 -(q-1+\varepsilon\theta)\int_\CR\phi^\theta h_1^2
 +\sigma_0\theta\int_\CR\phi^\theta(h_2^2+h_3^2),\\
 I_2={}&(1-\varepsilon\theta)\mathsf E(h_2)
 -\bigl(q-1+(C_{\rm tail}+\sigma_0)\theta\bigr)
                \int_\CR\phi^\theta h_2^2,\\
 I_3={}&(1-\varepsilon\theta)\mathsf E(h_3)
 -\frac2q(1+\varepsilon_2\theta)\int_\CR h_3^q
 -\sigma_0\theta\int_\CR\phi^\theta h_3^2,\\
 I_4={}&2(1-\varepsilon\theta)\langle h_1,h_2\rangle_{\La,\nu_1}
                  -\frac4q\int_\CR\phi^\theta h_1h_2,\\
 I_5={}&2(1-\varepsilon\theta)\langle h_1+h_2,h_3\rangle_{\La,\nu_1}
                  -\frac4q\int_\CR\phi^\theta(h_1+h_2)h_3.
\end{align*}
The factors $2/q\le1$ have only enlarged the coefficients
$\varepsilon$ and $C_{\rm tail}$ in $I_1,I_2$.

First, positivity of $\mathcal A\phi/\phi$ from
Lemma~\ref{lem:fs-profile} and the ground-state identity imply
\[
 \mathsf E(\phi T(h/\phi))
 =\int_\CR\phi^2|\nabla T(h/\phi)|^2\rd\nu_1
 +\int_\CR\frac{\mathcal A\phi}{\phi}\,
                   \phi^2T(h/\phi)^2\rd\nu_1.
\]
Each of the three scalar truncations is $1$-Lipschitz and satisfies
$|T(s)|\le|s|$. Hence
\begin{equation}\label{eq:fs-truncation-energy}
 \mathsf E(h_i)\le\mathsf E(h)\qquad(i=1,2,3).
\end{equation}
On the support of $h_2$ one has $h_1=\gamma\phi$; on the support of
$h_3$ one has $h_1+h_2=M\phi$. Consequently,
\begin{align*}
 I_4={}&2\gamma\theta(1/q-\varepsilon)
            \int_\CR\phi^{q-1}h_2\rd\nu_1
       +2\gamma(1-\varepsilon\theta)R(h_2),\\
 I_5={}&2M\theta(1/q-\varepsilon)
            \int_\CR\phi^{q-1}h_3\rd\nu_1
       +2M(1-\varepsilon\theta)R(h_3).
\end{align*}
The integral terms are nonnegative if $\varepsilon<1/3$.
By \eqref{eq:fs-truncation-energy}, their residual terms together with
$2R(h)$ cost at most $C\|R\|_*\mathsf E(h)^{1/2}$.

For $I_1$, let $p$ be the energy projection of $h_1$ onto
$\mathcal T_{0,\tau}$. Both assertions of Lemma~\ref{lem:fs-forms}
give
\[
 B_\phi(h_1,h_1)\ge c_a\theta\mathsf E(h_1)
                              -C_a\theta\mathsf E(p).
\]
Here the mixed term between $p$ and $h_1-p$ is controlled by
\eqref{eq:fs-tangent-form} and Young's inequality.
Set
\[
 Q_1:=\int_\CR\phi^\theta h_1^2\rd\nu_1,\qquad
 Q_{23}:=\int_\CR\phi^\theta(h_2^2+h_3^2)\rd\nu_1,
 \qquad S:=\{h>\gamma\phi\}.
\]
Orthogonality of $h$ implies
$\langle h_1,w\rangle_{\La,\nu_1}
=-\langle h_2+h_3,w\rangle_{\La,\nu_1}$ for every tangent vector $w$.
Using the whole-unit-tangent fourth-moment estimate in
Lemma~\ref{lem:fs-chart}, H\"older's inequality yields
\begin{align*}
 \mathsf E(p)
 &=\sup_{\substack{w\in\mathcal T_{0,\tau}\\\mathsf E(w)=1}}
            |\langle h_2+h_3,w\rangle_{\La,\nu_1}|^2\\
 &\le C_a\mu_\phi(S)^{1/2}
                 \int_\CR\phi^\theta(h_2+h_3)^2\rd\nu_1\\
 &\le\frac{C_a\sqrt\delta}{\gamma}\sqrt{Q_1Q_{23}}.
\end{align*}
Indeed, $\mu_\phi(S)\le Q_1/\gamma^2$, whereas the final integral
is at most $\min\{\delta,2Q_{23}\}\le\sqrt{2\delta Q_{23}}$.
Also $Q_1\le\|h_1\|_{q,\nu_1}^2\le\mathsf E(h_1)$.
Choose $\varepsilon$ small relative to the form constant $c_a$.
Then
\[
 I_1\ge\theta\left(c'_aQ_1+\sigma_0Q_{23}
               -\frac{C_a\sqrt\delta}{\gamma}\sqrt{Q_1Q_{23}}\right)
 \ge0
\]
for a fixed sufficiently small $\delta$. The lower bound for $\sigma_0$
is uniform in $q$.

For $I_2$, put $Q_{20}=\int_\CR V^\theta h_2^2\rd\nu_1$ and
let $\Pi_K$ be weighted projection onto the direct sum of all eigenspaces
with $\mu\le\mu_{K,0}$. The support of $h_2$ is contained in $S$.
The density estimate \eqref{eq:fs-density-bound} and Chebyshev's
inequality give
\[
 \mu_0(S)\le C_a\mu_\phi(S)^{1/2}
       \le C_a\gamma^{-q/2}\delta^{q/4}.
\]
For each entire low eigenspace, \eqref{eq:fs-low-moment} bounds its squared projection norm by $D_K^{1/2}\mu_0(S)^{1/2}Q_{20}$. Summing over distinct eigenvalues, without a sum over angular multiplicities, gives
\begin{equation}\label{eq:fs-low-projection}
 \int_\CR V^\theta(\Pi_Kh_2)^2\rd\nu_1
 \le C_a(K+1)(K+2)D_K^{1/2}
          \left(\frac\delta{\gamma^2}\right)^{q/8}Q_{20}.
\end{equation}
For fixed $K$, choose $\delta\le\gamma^2$ small enough to make the
coefficient at most $1/2$, uniformly for $2<q\le3$.
All eigenvalues are at least $1$, and the weighted spectral projections
are also energy orthogonal. Retaining the energy of the low modes yields
\begin{align}
 \mathsf E(h_2)-Q_{20}\ge(\mu_{K,0}-1) \left(Q_{20}-\int_\CR V^\theta(\Pi_Kh_2)^2\rd\nu_1\right)\ge\frac{K\theta}{2}Q_{20}. \label{eq:fs-retained-energy}
\end{align}
Using \eqref{eq:fs-form-comparison}, with
$G^2/(n(n+1))=\mathsf E(h_2)-Q_{20}$, gives
\begin{align*}
 I_2\ge{}&\bigl(1-C_*|\tau|-\varepsilon\theta\bigr)\bigl(\mathsf E(h_2)-Q_{20}\bigr)-\theta\bigl(1+\varepsilon+C_{\rm tail}+\sigma_0+C_*|\tau|\bigr)Q_{20}.
\end{align*}
The constant $C_*$ depends only on $a$ and on the fixed $\varepsilon$,
including $C_{\rm tail}$. Choose a fixed integer
\[
 K\ge8\left(2+\varepsilon+C_{\rm tail}
                         +\sup_{2<q\le3}\sigma_0\right),
\]
and then reduce the parameter radius so that $C_*|\tau|\le1/8$.
For $\varepsilon<1/8$, \eqref{eq:fs-retained-energy} proves $I_2\ge0$.

Finally, $\|h_3\|_{q,\nu_1}\le\|h\|_{q,\nu_1}\le1$ and weighted
H\"older give $\int\phi^\theta h_3^2\rd\nu_1\le\|h_3\|_{q,\nu_1}^2$.
The sharp inequality therefore gives
\[
 I_3\ge\frac\theta q(1-q\varepsilon-4\varepsilon_2)
                        \|h_3\|_{q,\nu_1}^2\ge0,
\]
because $4\varepsilon_2=1-3\varepsilon$ and $q\le3$.

The choices can be made in the order: small fixed $\varepsilon$, fixed
large $K$, small fixed profile radius, and small fixed $\delta$ satisfying
both the $I_1$ bound and \eqref{eq:fs-low-projection}. Increase the lower
dimension threshold to that required by Lemma~\ref{lem:fs-lowmom}.
None of these choices depends on $d$. Combining all five estimates in
\eqref{eq:fs-concavity-remainder} leaves
\[
 \mathsf D(\phi+h)\ge\mathsf D(\phi)
      +\varepsilon\theta\mathsf E(h)
      -C\|R\|_*\mathsf E(h)^{1/2}.
\]
Young's inequality absorbs half of the positive quadratic term and
proves \eqref{eq:fs-robust-deficit}.
\end{proof}

\subsection{A local quartic estimate with a uniform radius}
For each fixed $(q,d)$, separating a quartic zero-mode contribution from a coercive transverse contribution yields the asymptotically sharp local quartic estimate of \cite[Propositions~3--4, Definition~5 and equation~(4.18)]{Frank-Peteranderl=2024}. We combine the preceding estimates to control both contributions on a relative neighborhood whose radius is independent of the dimension.
\begin{lemma}\label{lem:fs-local}
There are $\delta_a\in(0,1)$, $\kappa_a>0$, and $d_0(a)$, depending only on $a<0$, such that, for $d\ge d_0(a)$,
\be\label{eq:fs-local-quartic}
 u\ge0,\qquad
 \frac{\dist_{\La,\nu_1}(u,\MR)^2}{\mathsf E(u)}\le\delta_a
 \quad\Longrightarrow\quad
 \mathsf E(u)\mathsf D(u)
 \ge\frac{\kappa_a}{d^3}\dist_{\La,\nu_1}(u,\MR)^4.
\ee
\end{lemma}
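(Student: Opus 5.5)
By homogeneity and axial translation invariance, I would normalize $u$ so that its nearest optimizer (Lemma~\ref{lem:nearest-relative}) is $V$. Then $\mathsf E(u-V)=\dist_{\La,\nu_1}(u,\MR)^2=:\mathsf d^2$, and the relative condition gives $\mathsf d^2\le\delta_a\mathsf E(u)$. Since $\mathsf E(u)=\mathsf E(V)+\mathsf d^2=1+\mathsf d^2$, this means $\mathsf d^2\le\delta_a/(1-\delta_a)$. Choosing $\delta_a$ small puts $u$ in the energy ball \eqref{eq:fs-energy-neighborhood}. Lemma~\ref{lem:fs-chart} then writes $u=c\phi_{\sigma,\tau}+h$ with $h\perp\mathcal T_{\sigma,\tau}$ and $|c-1|+|\sigma|+|\tau|+\sqrt{\mathsf E(h)}\le C_a\mathsf d$. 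I would translate axially to set $\sigma=0$ and divide by $c$ (close to $1$), so the situation becomes $u/c=\phi_{0,\tau}+h/c$. Nonnegativity is preserved, and $\|h\|_{q,\nu_1}^2\le\mathsf E(h)\le C_a^2\mathsf d^2\le\delta$ once $\delta_a$ is small. So Lemma~\ref{lem:fs-robust} applies.

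It gives
\[
\mathsf D(u)\ge c^2\Bigl(\mathsf D(\phi)+c'\theta\,\mathsf E(h/c)-\tfrac C\theta\|R\|_*^2\Bigr).
\]
Lemma~\ref{lem:fs-profile} supplies $\mathsf D(\phi)\ge c_a|\tau|^4/d^3$ and $\theta^{-1}\|R\|_*^2\le C_a d\cdot|\tau|^4/d^6=C_a|\tau|^4/d^5$. Raising $d_0(a)$, the residual is at most half the profile deficit, so
\[
\mathsf D(u)\ge \tfrac{c_a}{2}\,\frac{|\tau|^4}{d^3}+c_a\theta\,\mathsf E(h).
\]
Because $\theta\asymp d^{-1}$ and $\mathsf E(h)\le C_a^2\mathsf d^2\le C$, we have $\theta\mathsf E(h)\ge c_a d^{-1}\mathsf E(h)^2\ge c_a d^{-3}\mathsf E(h)^2$. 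Hence $\mathsf D(u)\ge c_a d^{-3}(|\tau|^4+\mathsf E(h)^2)$, and multiplying by $\mathsf E(u)\ge1$ costs nothing.

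What remains is the reverse geometric bound $\mathsf d^4\le C_a(|\tau|^4+\mathsf E(h)^2)$, that is, $\mathsf d\le C_a(|\tau|+\sqrt{\mathsf E(h)})$. I expect this to be the main obstacle, because the $c$ and $\sigma$ parameters carry no control from the deficit. I would avoid needing such control by comparing with a competitor in $\MR$ instead of with $V$. Since $\mathsf d$ is the distance to $\MR$,
\[
\mathsf d\le\sqrt{\mathsf E\bigl(u-c\,V(\cdot-\sigma/\sqrt d\cdot\sqrt\La/n\ \text{rescaled})\bigr)}\le c\sqrt{\mathsf E(\phi_{\sigma,\tau}-\phi_{\sigma,0})}+\sqrt{\mathsf E(h)}.
\]
Here I use that $\phi_{\sigma,0}$ is a normalized translate of $V$, because the FS relation makes $\sqrt\La/n=(q-2)\sqrt\La/2$, and normalization by $\|\cdot\|_{q,\nu_1}$ keeps it in $\MR$. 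The uniform derivative bounds in Lemma~\ref{lem:fs-chart}, integrated along $s\mapsto\phi_{\sigma,s\tau}$, give $\mathsf E(\phi_{\sigma,\tau}-\phi_{\sigma,0})\le C_a|\tau|^2$. This bound holds in whole unit directions and so is independent of $d$. This yields $\mathsf d\le C_a(|\tau|+\sqrt{\mathsf E(h)})$, so $\mathsf d^4\le 8C_a^4(|\tau|^4+\mathsf E(h)^2)$.

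Combining the three bounds gives $\mathsf E(u)\mathsf D(u)\ge\kappa_a d^{-3}\mathsf d^4$. The constants $\delta_a$, $\kappa_a$, $d_0(a)$ are fixed in the order required by Lemmas~\ref{lem:fs-chart}, \ref{lem:fs-profile} and \ref{lem:fs-robust}, none of which depends on $d$.
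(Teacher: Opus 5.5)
Your proposal is correct and follows essentially the same route as the paper. You normalize by the nearest optimizer, modulate with Lemma~\ref{lem:fs-chart}, apply Lemma~\ref{lem:fs-robust} at $\phi_{0,\tau}$, absorb the residual $C_a|\tau|^4/d^5$ into $\mathsf D(\phi_{0,\tau})\ge c_a|\tau|^4/d^3$ for large $d$, and bound the distance by $C_a|\tau|+\sqrt{\mathsf E(h)}$. Your competitor $c\,\phi_{\sigma,0}=c\,V(\cdot-\sigma/\sqrt d)$ is exactly the paper's comparison of the renormalized function $v$ with $V$, which the paper then transfers back to $u$ by scaling and translation invariance of the quotient; note also that the identity $\sqrt\La/n=(q-2)\sqrt\La/2$ is just the definition of $n$, not the FS relation.
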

\begin{proof}
The assertion is immediate for $u\in\MR$. We therefore assume $u\notin\MR$ and keep $u$ fixed throughout the argument.

\noindent\textit{Step 1. Normalize the nearest optimizer.}
The existence and orthogonality part of Lemma~\ref{lem:nearest-relative} remains valid at $\La=\La_{FS}$: its proof uses only the equation for $V$, positivity of the correlation, and its decay at infinity. It gives a nearest optimizer
\[
 g=c_*V(\cdot-t_*),\qquad c_*>0,
 \qquad
 \langle u-g,g\rangle_{\La,\nu_1}=0.
\]
The lemma applies equally to $\nu_1$, since this measure is a constant multiple of $\nu$. As $\mathsf E(V)=1$, orthogonality gives the exact identities
\[
 \mathsf E(u)
 =c_*^2+\dist_{\La,\nu_1}(u,\MR)^2,\qquad
 c_*^2=\mathsf E(u)-\dist_{\La,\nu_1}(u,\MR)^2.
\]
Define the normalized function
\[
 u_{\rm g}(t,\omega):=c_*^{-1}u(t+t_*,\omega).
\]
Translation is an energy isometry and preserves $\MR$, so $V$ is a nearest optimizer for $u_{\rm g}$ and
\[
 \begin{aligned}
 \mathsf E(u_{\rm g}-V)
 &=\frac{\dist_{\La,\nu_1}(u,\MR)^2}
 {\mathsf E(u)-\dist_{\La,\nu_1}(u,\MR)^2}
 \le\frac{\delta_a}{1-\delta_a},\\
 \mathsf E(u_{\rm g})&=1+\mathsf E(u_{\rm g}-V).
 \end{aligned}
\]
In particular, taking $\delta_a\le1/2$ ensures $1\le\mathsf E(u_{\rm g})\le2$.

\noindent\textit{Step 2. Choose a dimension-independent neighborhood and modulate.}
Fix the parameter and energy radii $r_a,\varepsilon_a$ in Lemma~\ref{lem:fs-chart}, and denote its decomposition constant by $C_{\rm ch}$, which depends only on $a$. Let $\delta>0$ be the truncation threshold in Lemma~\ref{lem:fs-robust}. Decrease the fixed profile radius $t_0(a)>0$ if necessary so that both Lemmas~\ref{lem:fs-profile} and~\ref{lem:fs-robust} apply for $|\tau|\le t_0(a)$. Choose $\delta_a\in(0,1/2]$ sufficiently small that
\[
 \begin{gathered}
 \frac{\delta_a}{1-\delta_a}<\varepsilon_a^2,\qquad
 C_{\rm ch}\sqrt{\frac{\delta_a}{1-\delta_a}}
       \le\min\{r_a/2,1/2,t_0(a)\},\\
 4C_{\rm ch}^2\frac{\delta_a}{1-\delta_a}\le\min\{\delta,1\}.
 \end{gathered}
\]
All quantities used in this choice depend only on $a$.
Lemma~\ref{lem:fs-chart} now yields
\[
 u_{\rm g}=c\phi_{\sigma,\tau}+h,\qquad
 h\perp_{\La,\nu_1}\mathcal T_{\sigma,\tau},
\]
with
\[
 |c-1|+|\sigma|+|\tau|+\sqrt{\mathsf E(h)}
 \le C_{\rm ch}\sqrt{\frac{\delta_a}{1-\delta_a}}.
\]
Thus $1/2\le c\le3/2$ and $|\tau|\le t_0(a)$. Remove the axial shift and this remaining amplitude by setting
\[
 v(t,\omega):=c^{-1}u_{\rm g}(t+\sigma/\sqrt d,\omega),\qquad
 k(t,\omega):=c^{-1}h(t+\sigma/\sqrt d,\omega).
\]
The translation rule for the profiles and their tangent vectors gives
\[
 v=\phi_{0,\tau}+k\ge0,\qquad
 k\perp_{\La,\nu_1}\mathcal T_{0,\tau}.
\]
Moreover, the sharp inequality and our choice of $\delta_a$ imply
\[
 \|k\|_{q,\nu_1}^2\le\mathsf E(k)
 =c^{-2}\mathsf E(h)
 \le4C_{\rm ch}^2\frac{\delta_a}{1-\delta_a}
 \le\min\{\delta,1\}.
\]
These are exactly the smallness, sign, and orthogonality hypotheses needed for Lemma~\ref{lem:fs-robust}. We also have the useful energy bounds
\[
 \frac49\le\mathsf E(v)=c^{-2}\mathsf E(u_{\rm g})\le8.
\]

\noindent\textit{Step 3. Absorb the Euler--Lagrange residual.}
Apply Lemma~\ref{lem:fs-robust} at $\phi_{0,\tau}$. Lemma~\ref{lem:fs-profile} and $\theta\asymp_a d^{-1}$ give constants $A_a,B_a,C_a'>0$, independent of $d$, such that
\[
 \begin{aligned}
 \mathsf D(v)
 &\ge\mathsf D(\phi_{0,\tau})
       +c_a\theta\mathsf E(k)
       -\frac{C_a}{\theta}
          \|\mathcal A\phi_{0,\tau}-\phi_{0,\tau}^{q-1}\|_*^2\\
 &\ge\frac{A_a}{d^3}|\tau|^4
       +\frac{B_a}{d}\mathsf E(k)
       -\frac{C_a'}{d^5}|\tau|^4.
 \end{aligned}
\]
Indeed, squaring the residual bound produces $|\tau|^4/d^6$, while $\theta^{-1}\le C_ad$ loses only one power of $d$. Increase $d_0(a)$ so that $C_a'/d^2\le A_a/2$ for $d\ge d_0(a)$. It follows that
\be\label{eq:fs-local-absorbed}
 \mathsf D(v)\ge\frac{A_a}{2d^3}|\tau|^4
                  +\frac{B_a}{d}\mathsf E(k).
\ee

\noindent\textit{Step 4. Control the fourth power of the distance.}
The energy bound for the first parameter derivative in Lemma~\ref{lem:fs-chart}, applied in the whole direction $\tau$, gives
\[
 \begin{aligned}
 \nm{\phi_{0,\tau}-V}_{\La,\nu_1}
 &=\left\|\int_0^1\sum_{j=1}^d
        \tau_j\partial_{\tau_j}\phi_{0,\upsilon\tau}\rd\upsilon
        \right\|_{\La,\nu_1}
 \le C_a|\tau|.
 \end{aligned}
\]
This uses an operator bound along the segment from $0$ to $\tau$ for the full parameter direction, with no factor depending on the number of angular coordinates. Since $V\in\MR$,
\[
 \dist_{\La,\nu_1}(v,\MR)
 \le\|v-V\|_{\La,\nu_1}
 \le C_a|\tau|+\sqrt{\mathsf E(k)}.
\]
Using $(x+y)^4\le8(x^4+y^4)$ for $x,y\ge0$, we obtain
\[
 \dist_{\La,\nu_1}(v,\MR)^4
 \le C_a\bigl(|\tau|^4+\mathsf E(k)^2\bigr).
\]
Because $\mathsf E(k)\le1$ and $d\ge1$,
\[
 \frac{\mathsf E(k)}d\ge\frac{\mathsf E(k)^2}{d^3}.
\]
Consequently \eqref{eq:fs-local-absorbed} yields a constant $\kappa_a'>0$, depending only on $a$, such that
\[
 \mathsf D(v)\ge\frac{\kappa_a'}{d^3}
                 \dist_{\La,\nu_1}(v,\MR)^4.
\]
Multiplying by $\mathsf E(v)\ge4/9$ proves
\[
 \mathsf E(v)\mathsf D(v)
 \ge\frac{4\kappa_a'}{9d^3}
                 \dist_{\La,\nu_1}(v,\MR)^4.
\]

\noindent\textit{Step 5. Restore the original function.}
By construction,
\[
 u(t,\omega)=c_*c\,
        v(t-t_*-\sigma/\sqrt d,\omega).
\]
Axial translations preserve the energy, deficit, and distance to $\MR$. Multiplication by $c_*c$ multiplies the energy and deficit by $(c_*c)^2$ and the fourth power of the distance by $(c_*c)^4$. Thus the preceding inequality gives \eqref{eq:fs-local-quartic} for $u$, with $\kappa_a=4\kappa_a'/9$. The choices of $\delta_a$, $\kappa_a$, and $d_0(a)$ are independent of the dimension.
\end{proof}

\subsection{An explicit upper bound}
Perturbations along the additional zero modes with second-order corrections are used in \cite[Section~4.3, proof of Proposition~4]{Frank-Peteranderl=2024} to obtain the sharp fixed-parameter local quartic coefficient. We use an explicit positive family to bound the dimensional order. With the normalized energy and deficit of this section, for $|\varepsilon|<1$ set
\[
 v_\varepsilon(t,\omega):=
 V(t)\left(1-\frac{\varepsilon\omega_d}
 {\cosh(\sqrt\La\,t/n)}\right)^{-n}.
\]
This is a positive $H^1(\CR)$ function. Since $\mathsf E(V)=\int_\CR V^q\rd\nu_1=1$, the binomial series and the beta integrals give
\be\label{eq:fs-trial-moments}
\begin{aligned}
 \mathsf E(v_\varepsilon)
 &=\leftidx{_2}F_1\left(n,n+1;\frac d2;\varepsilon^2\right),\\
 \nm{v_\varepsilon}_{q,\nu_1}^q
 &=\leftidx{_2}F_1\left(n+1,n+1;\frac d2;\varepsilon^2\right).
\end{aligned}
\ee
The moment identities in \cite[equation~(4.6)]{Frank-Peteranderl=2024} give, in Pochhammer notation,
\[
 \frac1{|\S^{d-1}|}\int_{\S^{d-1}}\omega_d^{2k}\rd\omega
 =\frac{(1/2)_k}{(d/2)_k},
\]
and
\[
 \frac{\displaystyle\int_\R
 \cosh^{-2(n+1+k)}(\sqrt\La\,t/n)\rd t}
 {\displaystyle\int_\R
 \cosh^{-2(n+1)}(\sqrt\La\,t/n)\rd t}
 =\frac{(n+1)_k}{(n+3/2)_k}.
\]
For the energy identity, one can also differentiate $v_\varepsilon$ directly and use the FS relation
\[
 \frac{\La}{n^2}=\frac{d-1}{2n+1}
\]
to obtain
\[
 \mathcal Av_\varepsilon
 =\left[1-\frac{n^2}{\La}\varepsilon^2
 +\left(\frac{n^2}{\La}-1\right)\varepsilon^2\omega_d^2\right]
 v_\varepsilon^{q-1}.
\]
Multiplying by $v_\varepsilon$ and integrating yields the first identity in \eqref{eq:fs-trial-moments}.

For each fixed $d$ and $a$, the series in \eqref{eq:fs-trial-moments} are analytic in $\varepsilon^2$ near zero. Since $2/q=n/(n+1)$, their quadratic terms cancel in the deficit, and
\be\label{eq:fs-trial-deficit}
 \mathsf D(v_\varepsilon)
 =\frac{2n(n+1)(2n+2-d)}{d^2(d+2)}\varepsilon^4
 +O_{d,a}(\varepsilon^6).
\ee
To compute the distance, write
\[
 v_\varepsilon=V+\varepsilon w+O_{H^1}(\varepsilon^2),\qquad
 w(t,\omega):=\frac{nV(t)\omega_d}{\cosh(\sqrt\La\,t/n)}.
\]
The angular mean of $w$ is zero, so $w$ is orthogonal in the energy inner product to every radial function. Therefore
\[
 \dist_{\La,\nu_1}(V+\varepsilon w,\MR)^2
 =\varepsilon^2\mathsf E(w).
\]
Distance is Lipschitz in the energy norm. Consequently,
\be\label{eq:fs-trial-distance}
 \dist_{\La,\nu_1}(v_\varepsilon,\MR)^2
 =\frac{2n(n+1)(n+2)}{d(2n+3)}\varepsilon^2
 +o_{d,a}(\varepsilon^2).
\ee
Here the coefficient follows either by direct integration or from
\[
 \mathcal Aw=(q-1)V^{q-2}w,\qquad
 \mathsf E(w)=(q-1)\int_\CR V^{q-2}w^2\rd\nu_1
 =\frac{2n(n+1)(n+2)}{d(2n+3)}.
\]
The coefficient in \eqref{eq:fs-trial-deficit} is positive because $2n+2>d$ on the FS curve with $a<0$.

The normalized quotient agrees with the original weighted quotient by the Emden--Fowler identities and cancellation of the common measure factor. Taking $\varepsilon\to0$ at fixed $d$ in \eqref{eq:fs-trial-deficit}--\eqref{eq:fs-trial-distance}, with $\mathsf E(v_\varepsilon)\to1$, gives
\be\label{eq:fs-explicit-upper}
 \widetilde c_{d,a,b_{FS}(a)}
 \le\frac{(2n+2-d)(2n+3)^2}
 {2n(n+1)(n+2)^2(d+2)}.
\ee
Finally, fixing $a<0$ and expanding the parameters in \eqref{eq:fs-parameters} gives
\[
 n=\frac d2-1-2a+O_a(d^{-1}),\qquad
 2n+2-d=-4a+O_a(d^{-1}).
\]
Thus
\be\label{eq:fs-upper-calculation}
 \frac{(2n+2-d)(2n+3)^2}
 {2n(n+1)(n+2)^2(d+2)}
 =\frac{32|a|}{d^3}+O_a(d^{-4}).
\ee
Only the perturbation limit in each fixed dimension was used to obtain \eqref{eq:fs-explicit-upper}; no dimension-uniform Taylor remainder is required.

\subsection{From local quartic coercivity to global stability}
The global quartic stability inequality at fixed parameters is proved in \cite[Corollary~6 and its proof]{Frank-Peteranderl=2024} by a contradiction argument. To retain quantitative control as the dimension varies, we use the energy-controlled path from Section~\ref{sec:global}.
We continue to use $E_\La$, $D_{\La,q}$, and $\rho$ from Section~\ref{sec:global}, now at $\La=\La_{FS}$. All quantities in this subsection use the original cylinder measure $\nu$.

\begin{lemma}\label{lem:fs-energy-path}
Let $d\ge2$, $a<0$, and $b=b_{FS}(a)$. Every nonnegative $u\in H^1(\CR)$ with $\nm{u}_q=1$ can be joined to $V/\nm{V}_q$ by an $H^1$-continuous path $\Gamma:[0,1]\to H^1(\CR)$ such that
\[
\Gamma(\tau)\ge0,\qquad \nm{\Gamma(\tau)}_q=1,\qquad
E_\La(\Gamma(\tau))\le E_\La(u)\quad(0\le\tau\le1).
\]
\end{lemma}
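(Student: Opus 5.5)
The plan is to repeat the construction of Lemma~\ref{lem:energy-path} essentially word for word, and to check which steps used the strict conditions $q<2^*$ and $\La<\La_{FS}$. On the FS curve with $a<0$ we have $b=b_{FS}(a)>a$, so $q<2^*$ still holds. The only change is that $\La=\La_{FS}$ now sits on the boundary. The path has three pieces, in order.

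\textbf{First piece: from $u$ to a symmetric function.} We first join $u$ to $Tu$ by the segment $\gamma_u$. We then apply fiberwise continuous Steiner symmetrization $R_\tau$ in $t$ and compose with $T$. This produces a continuous path from $Tu$ to $u_0=T(R_\infty u)\in\mathcal K$. All the energy estimates involved (\eqref{10-29-1}, the bound $E_\La(\gamma_u(\tau))\le e$, and the contraction of $\partial_t$ and $\nabla_\omega$ under $R_\tau$) use only the sharp inequality, i.e.\ that $S_{\La,q}$ is the positive infimum. They use nothing about the value of $\La$, so they carry over unchanged. Likewise, the heat-kernel representation showing that $T$ preserves $\mathcal K$ and the $L^q$ precompactness of energy-bounded subsets of $\mathcal K$ depend only on $2<q<2^*$ and $\La>0$. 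These steps need no modification.

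\textbf{Second piece: inverse iteration.} Next we run the normalized inverse iteration $u_{n+1}=T(u_n)$ inside $\mathcal K$. As before, $e_n\downarrow e_\infty\ge S_{\La,q}$ and $\nm{u_n-e_nz_n}_\La\to0$. By cone compactness every cluster point is a nonnegative $v\in\mathcal K$ with $\nm v_q=1$ and $\mathcal Av=e_\infty v^{q-1}$. The strong maximum principle then gives $v>0$.

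\textbf{Third piece: identifying the limit.} This is the main obstacle. In Section~\ref{sec:global} the conclusion that $v$ is a multiple of a translate of $V$ came from the rigidity result \cite[Corollary 1.3]{Dolbeault etc=2016}. That result covers positive solutions in the closed symmetry region $b\ge b_{FS}(a)$ (see \eqref{eq:intro-symmetry}), so I would invoke it directly at the boundary. Its statement includes the FS curve: the nonlinear flow argument there requires only $\La\le\La_{FS}$.

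If I wanted to avoid relying on the endpoint version, I would instead exploit the symmetry of the iterates. Every element of $\mathcal K$ is even and nonincreasing in $|t|$, but it need not be independent of $\omega$. I would therefore add a spherical symmetrization, namely continuous cap symmetrization in $\omega$ on each fiber, before entering the cone. Cap symmetrization decreases $\nabla_\omega$ energy and preserves $L^q$ norms. This would allow $\mathcal K$ to be restricted further to functions that depend on $\omega$ only through $\omega_1$. The fallback still needs endpoint rigidity of positive solutions, but for that reduced class it is available from \cite{Dolbeault etc=2016}.

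Once $v$ is identified as a multiple of a translate of $V$, membership in $\mathcal K$ forces the translation to be zero, and the normalization forces $v=V/\nm V_q$. Hence the whole sequence converges. Concatenating the paths $\gamma_{u_n}$ on intervals of geometrically decreasing length then gives continuity at the terminal point exactly as before, which completes the path and proves the lemma.
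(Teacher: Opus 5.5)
Your proposal is correct and follows the paper's proof. Like the paper, you re-run Lemma~\ref{lem:energy-path}, check that $\La>0$ and $q<2^*$ persist on the FS curve (your observation $b_{FS}(a)>a$ for $a<0$ is equivalent to the paper's check $(a_c-a)^2>a_c^2$), and identify the limit of the inverse iteration through the endpoint rigidity result of \cite{Dolbeault etc=2016}. The cap-symmetrization fallback is unnecessary, since, as you note yourself, it still relies on the same endpoint rigidity.
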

\begin{proof}
The proof of Lemma~\ref{lem:energy-path} also applies at the Felli--Schneider boundary. We verify the two hypotheses in that proof which depend on the parameters. First, $\La=(a_c-a)^2>0$, and
\[
q=2\sqrt{1+\frac{d-1}{(a_c-a)^2}}<2^*.
\]
For $d\ge3$ this follows from $(a_c-a)^2>a_c^2$; for $d=2$ the exponent $q$ is finite and $2^*=\infty$. Thus the inverse map, the energy-controlled interpolation, and compactness of the even, axially decreasing cone used in that lemma remain valid.

Second, every positive finite-energy solution of
\[
\bigl(-\partial_t^2-\Delta_\omega+\La\bigr)v
=\lambda v^{q-1},\qquad \lambda>0,
\]
is a translate and a positive multiple of $V$ at this boundary as well. This is the endpoint rigidity result of \cite{Dolbeault etc=2016}; its application for $a<0$ and $b=b_{FS}(a)$ is also stated explicitly in \cite[Section~9]{Wei-Wu=2026}. The factor $\lambda$ is removed by multiplying $v$ by $\lambda^{1/(q-2)}$. Consequently every cluster point of the inverse iteration in the proof of Lemma~\ref{lem:energy-path} is the same normalized, centered optimizer. The iteration converges in $H^1$, and concatenating its interpolations gives the asserted path. Only the bound by the initial energy is needed; no monotonicity of the energy along each interpolation is asserted.
\end{proof}

\begin{proposition}\label{prop:fs-local-global}
Define
\[
\widetilde c_{\La,q}^{+}
:=\inf_{\substack{u\ge0\\u\notin\MR}}
\frac{E_\La(u)D_{\La,q}(u)}{\dist_\La(u,\MR)^4},
\qquad
K_{\La,q}(\delta)
:=\inf_{\substack{u\ge0,\ u\notin\MR\\\rho(u)\le\delta}}
\frac{E_\La(u)D_{\La,q}(u)}{\dist_\La(u,\MR)^4},
\quad 0<\delta<1.
\]
Then
\be\label{eq:fs-local-global}
\widetilde c_{\La,q}^{+}\ge\delta^2K_{\La,q}(\delta).
\ee
\end{proposition}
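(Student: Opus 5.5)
The argument mirrors the proof of Proposition~\ref{prop:local-global}, with the quadratic quotient replaced by the quartic one and Lemma~\ref{lem:energy-path} replaced by Lemma~\ref{lem:fs-energy-path}. By homogeneity (the quartic quotient $E_\La D_{\La,q}/\dist_\La^4$ is invariant under $u\mapsto cu$ for $c>0$), I normalize a nonnegative $u\notin\MR$ so that $\nm{u}_q=1$. If $\rho(u)\le\delta$, the quotient is at least $K_{\La,q}(\delta)\ge\delta^2K_{\La,q}(\delta)$, and there is nothing to prove. So assume $\rho(u)>\delta$.

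In that case I take the path $\Gamma$ of Lemma~\ref{lem:fs-energy-path}. Along it, $E_\La(\Gamma(\tau))\ge S_{\La,q}>0$ because $\nm{\Gamma(\tau)}_q=1$, and $\dist_\La(\cdot,\MR)$ is $1$-Lipschitz in the energy norm. Hence $\tau\mapsto\rho(\Gamma(\tau))$ is continuous, with $\rho(\Gamma(1))=0$ and $\rho(\Gamma(0))>\delta$. The intermediate value theorem gives a point $w=\Gamma(\tau_0)\ge0$ with $\rho(w)=\delta$, $\nm{w}_q=1$, and $E_\La(w)\le E_\La(u)$. Also $w\notin\MR$ since $\rho(w)=\delta>0$.

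The comparison then runs in three steps. First, since $\dist_\La(u,\MR)^2\le E_\La(u)$ (as $0\in\MR$),
\[
\frac{E_\La(u)D_{\La,q}(u)}{\dist_\La(u,\MR)^4}\ge\frac{D_{\La,q}(u)}{E_\La(u)}=1-\frac{S_{\La,q}}{E_\La(u)}.
\]
Second, using $E_\La(w)\le E_\La(u)$, this is at least $1-S_{\La,q}/E_\La(w)=D_{\La,q}(w)/E_\La(w)$. Third, rewrite this last quantity using $\dist_\La(w,\MR)^2=\delta E_\La(w)$:
\[
\frac{D_{\La,q}(w)}{E_\La(w)}=\delta^2\,\frac{E_\La(w)D_{\La,q}(w)}{\dist_\La(w,\MR)^4}\ge\delta^2K_{\La,q}(\delta).
\]
Taking the infimum over $u$ proves \eqref{eq:fs-local-global}.

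The only genuinely nontrivial input is the existence of the energy-nonincreasing, $L^q$-normalized path at the Felli--Schneider endpoint, and Lemma~\ref{lem:fs-energy-path} supplies it. Once it is available, the remaining steps are elementary. The thing to check carefully is the exponent bookkeeping: the factor $\delta^2$ arises because $\dist^4=\delta^2E^2$ at the crossing point. There the quotient $E\,D/\dist^4$ becomes $D/(\delta^2E)$, which is consistent with the quadratic case, where $D/\dist^2=D/(\delta E)$ produced the factor $\delta$.
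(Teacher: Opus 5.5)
Your proof is correct and follows the paper's argument. You normalize $\nm{u}_q=1$, use the energy-nonincreasing path of Lemma~\ref{lem:fs-energy-path} and continuity of $\rho$ to find $w$ with $\rho(w)=\delta$, and then run the chain $E_\La(u)D_{\La,q}(u)/\dist_\La(u,\MR)^4\ge1-S_{\La,q}/E_\La(u)\ge1-S_{\La,q}/E_\La(w)=\delta^2E_\La(w)D_{\La,q}(w)/\dist_\La(w,\MR)^4$, exactly as the paper does; the only difference is that the paper writes the first step as $(1-S_{\La,q}/E_\La(u))/\rho(u)^2\ge1-S_{\La,q}/E_\La(u)$ using $\rho(u)\le1$, which is the same observation as your $\dist_\La(u,\MR)^2\le E_\La(u)$.
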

\begin{proof}
By homogeneity take $u\ge0$ with $\nm{u}_q=1$. If $\rho(u)\le\delta$, the definition of $K_{\La,q}(\delta)$ gives the stronger estimate without the factor $\delta^2$. Otherwise, Lemma~\ref{lem:fs-energy-path} and continuity of the distance to $\MR$ give a point $w$ on the path with
\[
\rho(w)=\delta,\qquad \nm{w}_q=1,\qquad E_\La(w)\le E_\La(u).
\]
Here $E_\La\ge S_{\La,q}>0$ on the path, so $\rho$ is continuous. Since $\rho(u)\le1$,
\[
\begin{aligned}
\frac{E_\La(u)D_{\La,q}(u)}{\dist_\La(u,\MR)^4}
&=\frac{1-S_{\La,q}/E_\La(u)}{\rho(u)^2}\\
&\ge 1-\frac{S_{\La,q}}{E_\La(u)}
\ge1-\frac{S_{\La,q}}{E_\La(w)}\\
&=\delta^2\frac{E_\La(w)D_{\La,q}(w)}{\dist_\La(w,\MR)^4}
\ge\delta^2K_{\La,q}(\delta).
\end{aligned}
\]
Taking the infimum proves the claim.
\end{proof}

\begin{proposition}\label{prop:fs-sign-reduction}
Let $\widetilde c_{\La,q}$ denote the corresponding infimum over arbitrary real-valued functions. Then
\be\label{eq:fs-sign-reduction}
\widetilde c_{\La,q}\ge
\left[
\bigl(\widetilde c_{\La,q}^{+}\bigr)^{-1/2}
+\bigl\{2(1-2^{2/q-1})\bigr\}^{-1/2}
\right]^{-2}.
\ee
\end{proposition}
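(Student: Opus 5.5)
The plan follows the mass-splitting argument of Proposition~\ref{prop:sign-reduction}, replacing the final Hilbert-norm step by a quartic combination. Normalize $\nm{u}_q=1$, write $u=u_+-u_-$, and change the sign of $u$ if necessary so that $m:=\nm{u_-}_q^q\in[0,1/2]$. The case $m=0$ is the definition of $\widetilde c_{\La,q}^{+}$. For $m>0$, the disjoint supports give, exactly as before,
\[
D_{\La,q}(u)=D_{\La,q}(u_+)+D_{\La,q}(u_-)+S_{\La,q}H(m).
\]
The same concavity computation as in Proposition~\ref{prop:sign-reduction} gives $D_{\La,q}(u_-)+S_{\La,q}H(m)\ge\zeta_qE_\La(u_-)$ with $\zeta_q=2(1-2^{2/q-1})$. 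Write $e_\pm:=E_\La(u_\pm)$, so that $E_\La(u)=e_++e_-$.

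Next we choose $g$, a nearest optimizer for $u_+$ given by Lemma~\ref{lem:nearest-relative}; its existence argument is valid at the FS boundary, as noted in Lemma~\ref{lem:fs-local}. Set $A:=E_\La(u_+-g)=\dist_\La(u_+,\MR)^2\le e_+$. The definition of $\widetilde c^+_{\La,q}$ gives $e_+D_{\La,q}(u_+)\ge\widetilde c^+A^2$. Since $e_+\le E_\La(u)$, it follows that
\[
E_\La(u)D_{\La,q}(u)\ge\widetilde c^{+}A^2+\zeta_q\,e_-(e_++e_-)\ge\widetilde c^{+}A^2+\zeta_q e_-^2 .
\]
The distance is bounded by the triangle inequality:
\[
\dist_\La(u,\MR)\le\nm{u-g}_\La\le\sqrt A+\sqrt{e_-}.
\]

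The last step is the elementary inequality: for $x=A$, $y=e_-\ge0$ and $\alpha,\beta>0$,
\[
(\sqrt x+\sqrt y)^4\le\bigl(\alpha^{-1/2}+\beta^{-1/2}\bigr)^2(\alpha x^2+\beta y^2).
\]
This follows by applying Cauchy--Schwarz twice. First,
\[
\sqrt x+\sqrt y=\alpha^{-1/4}(\alpha^{1/4}\sqrt x)+\beta^{-1/4}(\beta^{1/4}\sqrt y)\le(\alpha^{-1/2}+\beta^{-1/2})^{1/2}(\sqrt\alpha\,x+\sqrt\beta\,y)^{1/2}.
\]
Second, $(\sqrt\alpha x+\sqrt\beta y)^2\le(\alpha^{-1/2}+\beta^{-1/2})(\alpha^{3/2}x^2+\beta^{3/2}y^2)$; to get the weights $\alpha x^2+\beta y^2$ directly, write $\sqrt\alpha x+\sqrt\beta y=\alpha^{-1/4}(\alpha^{3/4}x)+\beta^{-1/4}(\beta^{3/4}y)$ instead. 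Combining the two steps gives the factor $(\alpha^{-1/2}+\beta^{-1/2})^2$. Taking $\alpha=\widetilde c^+_{\La,q}$ and $\beta=\zeta_q$ yields \eqref{eq:fs-sign-reduction} after undoing the normalization by homogeneity.

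The main point needing care is the first display. We must check that $E_\La(u)\ge e_+$ may legitimately multiply the positive-part quartic quotient. We must also check that $e_-^2\le e_-(e_++e_-)$ gives the negative-part term without requiring any control of $\rho(u_+)$. Both are immediate because the two contributions are added with nonnegative coefficients. The degenerate case $A=0$, where $u_+\in\MR$, is covered by the same inequality.
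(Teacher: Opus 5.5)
\textbf{Gap: the final elementary inequality is false, and the bound before it has discarded what is needed.}

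Your reduction to
\[
E_\La(u)D_{\La,q}(u)\ge\widetilde c^{+}A^2+\zeta_q e_-^2,\qquad \dist_\La(u,\MR)\le\sqrt A+\sqrt{e_-},
\]
is correct. The elementary inequality you then use is false. With $x=y=1$ and $\alpha=\beta=1$ it reads $16\le 8$, and for $x=y$, $\alpha=\beta$ it always fails by a factor $2$.

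Your first Cauchy--Schwarz step, $(\sqrt x+\sqrt y)^2\le(\alpha^{-1/2}+\beta^{-1/2})(\sqrt\alpha\,x+\sqrt\beta\,y)$, is fine. The second step would need $(\sqrt\alpha\,x+\sqrt\beta\,y)^2\le\alpha x^2+\beta y^2$, which fails whenever $xy>0$. Rewriting $\sqrt\alpha\,x=\alpha^{-1/4}(\alpha^{3/4}x)$ only reproduces the weights $\alpha^{3/2},\beta^{3/2}$ and adds a third factor $\alpha^{-1/2}+\beta^{-1/2}$.

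This cannot be repaired at the level of your first display. By H\"older, the sharp consequence of $ED\ge\alpha A^2+\beta e_-^2$ and $\dist\le\sqrt A+\sqrt{e_-}$ is $\widetilde c_{\La,q}\ge(\alpha^{-1/3}+\beta^{-1/3})^{-3}$. Equality is attained at $A\propto\alpha^{-2/3}$, $e_-\propto\beta^{-2/3}$. This constant is strictly smaller than the claimed $(\alpha^{-1/2}+\beta^{-1/2})^{-2}$ whenever both coefficients are positive. It would still give the $d^{-3}$ order in Theorem~\ref{thm:degenerate}, but it is not the stated inequality.

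\textbf{The fix.} What is missing is the cross term $2\sqrt{\widetilde c^{+}\zeta_q}\,A\,e_-$. You threw it away when you dropped $e_-D_{\La,q}(u_+)$ and replaced $\zeta_q e_-(e_++e_-)$ by $\zeta_q e_-^2$; the step you flagged as needing care is harmless, and this is the real loss. Keep the full product instead.

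Since $\nm{u_+}_q^q=1-m\ge1/2$, you have $e_+>0$. Also $D_{\La,q}(u_+)\ge\widetilde c^{+}A^2/e_+$, trivially so when $A=0$. Cauchy--Schwarz then gives
\[
E_\La(u)D_{\La,q}(u)\ge(e_++e_-)\Bigl(\frac{\widetilde c^{+}A^2}{e_+}+\zeta_q e_-\Bigr)\ge\bigl(\sqrt{\widetilde c^{+}}\,A+\sqrt{\zeta_q}\,e_-\bigr)^2 .
\]
Squaring your first Cauchy--Schwarz step alone gives
\[
\dist_\La(u,\MR)^4\le\bigl((\widetilde c^{+})^{-1/2}+\zeta_q^{-1/2}\bigr)^2\bigl(\sqrt{\widetilde c^{+}}\,A+\sqrt{\zeta_q}\,e_-\bigr)^2 ,
\]
and combining the two displays yields the claim. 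This is exactly the paper's argument, run with $k<\widetilde c^{+}$ and then $k\uparrow\widetilde c^{+}$. Treat $\widetilde c^{+}_{\La,q}=0$ separately, where the bound is trivial, since your weights must be positive.
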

\begin{proof}
If $\widetilde c_{\La,q}^{+}=0$, the assertion is immediate. Otherwise, use the normalization and choice of sign from the proof of Proposition~\ref{prop:sign-reduction}. Its mass-splitting argument applies unchanged at $\La=\La_{FS}$ and gives
\[
D_{\La,q}(u)\ge D_{\La,q}(u_+)+\zeta_qE_\La(u_-),
\]
where $\zeta_q=2(1-2^{2/q-1})$ is the coefficient obtained there. For any $0<k\le\widetilde c_{\La,q}^{+}$, put
\[
A_+:=E_\La(u_+),\qquad x:=\dist_\La(u_+,\MR)^2,
\qquad y:=E_\La(u_-).
\]
The chosen sign ensures $A_+>0$. Splitting the energy and applying Cauchy--Schwarz yield
\[
\begin{aligned}
E_\La(u)D_{\La,q}(u)
&\ge(A_++y)\left(k\frac{x^2}{A_+}+\zeta_qy\right)\ge\bigl(\sqrt{k}\,x+\sqrt{\zeta_q}\,y\bigr)^2.
\end{aligned}
\]
The triangle inequality for distance and a second weighted Cauchy--Schwarz inequality give
\[
\begin{aligned}
\dist_\La(u,\MR)^2
&\le(\sqrt{x}+\sqrt{y})^2\le\bigl(\sqrt{k}\,x+\sqrt{\zeta_q}\,y\bigr)
\bigl(k^{-1/2}+\zeta_q^{-1/2}\bigr).
\end{aligned}
\]
Combining the two displays and letting $k\uparrow\widetilde c_{\La,q}^{+}$ proves \eqref{eq:fs-sign-reduction}. The case $u_-=0$ is included by setting $y=0$.
\end{proof}

\begin{proof}[Proof of the lower bound in Theorem~\ref{thm:degenerate}]
Fix $a<0$ and take $b=b_{FS}^{(d)}(a)$. Lemma~\ref{lem:fs-local} supplies $\delta_a\in(0,1)$, $\kappa_a>0$, and $d_0(a)$ such that
\[
K_{\La,q}(\delta_a)\ge\frac{\kappa_a}{d^3},\qquad d\ge d_0(a).
\]
Although that lemma uses a normalized cylinder measure, its quartic quotient is identical to the one above: multiplying the measure by a positive constant multiplies the energy, the sharp deficit, and the squared energy distance by that same constant. The numerator and denominator therefore acquire the same factor. Multiplying a function by a nonzero constant likewise leaves the quotient unchanged.

Proposition~\ref{prop:fs-local-global} gives
\[
\widetilde c_{\La,q}^{+}\ge\frac{\delta_a^2\kappa_a}{d^3}.
\]
For fixed $a$ the Felli--Schneider exponent satisfies
\[
q=2+\frac4d+O_a(d^{-2}),\qquad
\lim_{d\to\infty}2d\bigl(1-2^{2/q-1}\bigr)=4\log2.
\]
We can therefore choose $d_1(a)\ge d_0(a)$ so that
\[
2\bigl(1-2^{2/q-1}\bigr)\ge\frac{\delta_a^2\kappa_a}{d^3}
\qquad(d\ge d_1(a)).
\]
The right-hand side of \eqref{eq:fs-sign-reduction} is increasing in each of its two positive coefficients. Hence
\[
\widetilde c_{\La,q}\ge\frac{\delta_a^2\kappa_a}{4d^3},
\qquad d\ge d_1(a).
\]
The Emden--Fowler transformation preserves the energy norm, the deficit, and the distance to the optimizer manifold, and thus identifies $\widetilde c_{\La,q}$ with $\widetilde c_{d,a,b_{FS}^{(d)}(a)}$.

For each of the finitely many dimensions $2\le d<d_1(a)$, the quartic stability theorem of Frank and Peteranderl \cite[Theorem~1 and Corollary~6]{Frank-Peteranderl=2024} gives
\[
\widetilde c_{d,a,b_{FS}^{(d)}(a)}>0.
\]
Increase $d_1(a)$ to an integer at least $3$, and set
\[
\beta_a^{FS}:=\min\left\{
\frac{\delta_a^2\kappa_a}{4},\quad
\min_{2\le m<d_1(a)}
m^3\widetilde c_{m,a,b_{FS}^{(m)}(a)}
\right\}>0.
\]
This constant depends on $a$ and is independent of $d$. It gives the asserted lower bound in every dimension $d\ge2$.
\end{proof}

\vskip 0.2in
\noindent{\bf Acknowledgments.} 
Juncheng Wei was supported by National R\&D Program of China (Grant No. 2022YFA10 05602), and Hong Kong General Research Fund (No. 14303125) “On Fujita equation in the critical or supercritical regime”. 
Houwang Li was supported by Beijing Institute of Technology Research Fund Program for Young Scholars and National Natural Science Foundation of China (No. 12601182).

\vskip 0.1in
\noindent{\bf Disclosure on AI assistance.} 
The authors used AI-assisted tools, principally ChatGPT/Codex, for brainstorming, literature searches and LaTeX editing.
AI output was not treated as mathematical authority. The authors independently worked and verified all theorem statements, proofs, calculations, constants, citations, and final prose, and take full responsibility for the contents of the paper.

\end{document}